\newif\ifen\entrue
                                                \newcommand{\lIfElse}[3]{\lIf{#1}{#2 \textbf{else}~#3}}
\newcommand\xrowht[2][0]{\addstackgap[.5\dimexpr#2\relax]{\vphantom{#1}}}
    \newcommand{\thetitle}{The College Application Problem}
    \newcommand{\thedate}{\today}
    \newcommand{\theauthor}{Max Kapur}
    \newcommand{\theotherauthor}{Sung-Pil Hong}
    \newcommand{\thetitle}{대학 지원 최적화 문제}
    \newcommand{\thedate}{\today}
    \newcommand{\theauthor}{Max Kapur}
    \newcommand{\theotherauthor}{홍성필}
\title{\thetitle}
\author{\theauthor}
\author{\theotherauthor}
\date{\thedate}
\affil{Seoul National University\\
Department of Industrial Engineering\\
Management Science/Optimization Lab\\
Correspondence: \url{maxkapur@snu.ac.kr}}
\affil{서울대학교\\
산업공학과 \\
경영과학/최적화 연구실\\
교신: \url{maxkapur@gmail.com}}
\DeclareMathOperator*{\argmax}{arg\,max}
    \newtheorem{theorem}{Theorem}
    \newtheorem{lemma}{Lemma}
    \newtheorem{corollary}{Corollary}
    \newtheorem{proposition}{Proposition}
    \theoremstyle{definition}
    \newtheorem{example}{Example}
    \newtheorem{definition}{Definition}
    \newtheorem{problem}{Problem}
    \newtheorem{assumption}{Assumption}
    \newtheorem{conjecture}{Conjecture}
    \newtheorem{theorem}{정리}
    \newtheorem{lemma}{기본 정리}
    \newtheorem{proposition}{제의}
    \theoremstyle{definition}
    \newtheorem{example}{예}
    \newtheorem{definition}{정의}
    \newtheorem{problem}{문제}
\begin{document}

\maketitle

\begin{abstract}
This paper considers the maximization of the expected maximum value of a portfolio of random variables subject to a budget constraint. We refer to this as the optimal college application problem. When each variable's cost, or each college's application fee, is identical, we show that the optimal portfolios are nested in the budget constraint, yielding an exact polynomial-time algorithm. When colleges differ in their application fees, we show that the problem is NP-complete. We provide four algorithms for this more general setup: a branch-and-bound routine, a dynamic program that produces an exact solution in pseudopolynomial time, a different dynamic program that yields a fully polynomial-time approximation scheme, and a simulated-annealing heuristic. Numerical experiments demonstrate the algorithms' accuracy and efficiency. 

\begin{keywords}
submodular maximization, knapsack problems, approximation algorithms
\end{keywords}

\ifen\else The full text of this paper is available in English at \url{https://github.com/maxkapur/CollegeApplication}.\fi
\end{abstract}

%

\pagebreak

\tableofcontents

\pagebreak
\ifen \section{Introduction}  \else \section{서론} \fi
\ifen This paper considers the following optimization problem:
\else 본 논문은 다음과 같은 최적화 문제를 고려한다.\fi
\begin{align} \label{headlineproblem}
\begin{split}
\text{maximize}\quad & v(\mathcal{X}) =  \operatorname{E}\Bigl[\max\bigr\{t_0,
\max\{t_j Z_j : j \in \mathcal{X}\}\bigr\}\Bigr] \\
\text{subject to}\quad & \mathcal{X} \subseteq \mathcal{C}, ~~\sum_{j\in \mathcal{X}} g_j \leq H
\end{split}
\end{align}
\ifen Here $\mathcal{C} = \{ 1 \dots m\}$ is an index set; $H > 0$ is a budget parameter; for $j = 1 \dots m$, $g_j > 0$ is a cost parameter and $Z_j$ is a random, independent Bernoulli variable with probability $f_j$; and for $j = 0\dots m$, $t_j\geq 0$ is a utility parameter. 
\else 단,  $\mathcal{C} = \{ 1 \dots m\}$은 지표 집합이며 $H$는 예산을 나타내는 모수이다. 각 $j = 1 \dots m$에 대해 $g_j > 0$는 비용 모수이며 $Z_j$는 확률 $f_j$를 가지는 서로 독립적인 Bernoulli 변수이다. 각 $j = 0 \dots m$에 대해 $t_j\geq 0$는 효용 모수이다. \fi

\ifen
We refer to this problem as the \emph{optimal college application} problem, as follows. Consider an admissions market with $m$ colleges. The $j$th college is named $c_j$. Consider a single prospective student in this market, and let each $t_j$-value indicate the utility she associates with attending $c_j$, where her utility is $t_0$ if she does not attend college. Let $g_j$ denote the application fee for $c_j$ and $H$ the student's total budget to spend on application fees. Lastly, let $f_j$ denote the student's probability of being admitted to $c_j$ if she applies, so that $Z_j$ equals one if she is admitted and zero if not. It is appropriate to assume that the $Z_j$ are statistically independent as long as $f_j$ are probabilities estimated specifically for this student (as opposed to generic acceptance rates). Then the student's objective is to maximize the expected utility associated with the best school she gets into within this budget. Therefore, her optimal college application strategy is given by the solution $\mathcal{X}$ to the problem above, where $\mathcal{X}$ represents the set of schools to which she applies. 
\else
다음 해석에 따라 이를 `대학 지원 최적화 문제'라고 부른다. $m$개의 대학교를 가지는 입학 시장을 고려하자. $j$번째 학교의 이름은 $c_j$이다. 어떤 학생이 $c_j$에 입학하게 되면 효용 $t_j$를 얻게 된다고 하자. 단, 어떤 대학에도 진학하지 않는 경우 그 학생의 효용은 $t_0$이다. $c_j$의 지원 비용이 $g_j$이며 학생이 지원에 쓸 수 있는 예산이 $H$라고 하자. 마지막으로, $c_j$에 지원하면 학생이 합격할 확률이 $f_j$이라고 하자. 따라서 합격하면 $Z_j = 1$, 합격 안 하면 $Z_j = 0$이 된다. $f_j$가 (학교의 전체적인 합격률이 아니라) 바로 이 학생의 합격 확률일 때 $Z_j$의 확률적 독립성은 적절한 가정이다. 그러면 학생의 목표는 주어진 예산 안에서 학생이 합격하는 학교들에서 얻는 효용 중 기대 최댓값을 최대화하는 것이다. 위 문제의 최적해가 $\mathcal{X}$일 때, 학생의 최적 대학 지원 전략은 $\mathcal{X}$에  속한 학교로 지원하는 것이다.
\fi

\ifen
The college application problem is not solely of theoretical interest. Due to the widespread perception that career earnings depend on college admissions outcomes, the solution of \eqref{headlineproblem} holds monetary value. In the American college consulting industry, students pay private consultants an average of \$200 per hour for assistance in preparing application materials, estimating their admissions odds, and identifying target schools (Sklarow 2018). In Korea, an important revenue stream for admissions consulting firms such as Megastudy (\url{megastudy.net}) and Jinhak (\url{jinhak.com}) is ``mock application'' software that claims to use artificial intelligence to optimize the client's application strategy. However, although predicting admissions outcomes on a school-by-school basis is a standard benchmark for logistic regression models (Acharya et al. 2019), we believe our study is the first to focus on the overall application strategy as an optimization problem.

The problem is also conformable to other competitive matching games such as job application. Here, the budget constraint may represent the time needed to complete each application, or a legal limit on the number of applications permitted.
\else
대학 지원 최적화 문제는 이론적인 화제일 뿐만이 아니다. 미래의 소득이 대학 입학 결과로 결정된다는 넓은 인식에 따라, \eqref{headlineproblem}의 최적해는 금전적인 가치를 가지고 있다. 미국 입학 컨설팅 산업에서 대학 지원 서류 작성, 합격 확률 추정, 그리고 지원 학교를 선택하는 데에 자문하는 개인 상담가의 시간당 급료는 평균 200달러이다 (Sklarow 2018). 한국에서, 메가스터디(\url{megastudy.net})와 진학(\url{jinhak.com})과 같은 입학 컨설팅 기업의 주된 수입원 중 인공지능을 활용하여 학생의 지원 전략을 최대화한다고 홍보하는 ``모의 지원'' 소프트웨어가 있다. 그러나, 학교 개별 합격 확률을 추정하는 것은 로지스틱 회귀 분석 모형의 익숙한 응용 사례(Acharya et al. 2019)이지만, 전체적인 지원 전략에 집중하여 최적화 문제로 모형화한 것은 본 연구의 새로운 기여로 보인다.

취직 전략과 같은 유사한 경쟁적 매칭 게임에도 위의 문제를 적용할 수 있다. 이때, 예산 제약 조건은 원서를 작성하는 시간이나 지원 개수에 대한 법적인 제한을 나타낼 수 있다.
\fi

\ifen \subsection{Methodological orientation} \else \subsection{방법록적 지향} \fi
\ifen
The college application problem straddles several methodological universes. Its stochastic nature recalls classical portfolio allocation models. However, the knapsack constraint renders the problem NP-complete, and necessitates combinatorial solution techniques. We also observe that the objective function is also a submodular set function, although our approximation results suggest that college application is a relatively easy instance of submodular maximization.
\else
대학 지원 문제는 다양한 방법론적인 우주를 걸치는 문제이다.  확률적인 문제인 만큼 재정학 분야에 뿌리를 가지는 포트폴리오 배분 모형과 비슷하다. 그러나 배낭 제약 조건은 대학 지원 문제를 NP-complete하게 만들며 조합적인 해법이 필요하다. 목적함수는 또한 submodular 집합 함수이지만, 본 연구가 제시하는 근사 해법 결과는 대학 지원 문제가 일반 submodular 함수 최대화 문제의 비교적 쉬운 경우임으로 해석할 수 있다.
\fi

\ifen
A problem similar to \eqref{headlineproblem} arose in an equilibrium analysis of the American college market by Fu (2014), who described college application as a ``nontrivial portfolio problem'' (226). In computational finance, traditional portfolio allocation models weigh the sum of expected profit across all assets against a risk term, yielding a concave maximization problem with linear constraints (Markowitz 1952; Meucci 2005). But college applicants maximize the expected value of their \emph{best} asset: If a student is admitted to her $j$th choice, then she is indifferent as to whether she gets into her $(j+1)$th choice. As a result, student utility is \emph{convex} in the utility associated with individual applications. Risk management is implicit in the college application problem because, in a typical admissions market, college preferability correlates inversely with competitiveness. That is, students negotiate a tradeoff between attractive, selective “reach schools” and less preferable “safety schools” where admission is a safer bet (Jeon 2015). Finally, the combinatorial nature of the college application problem makes it difficult to solve using the gradient-based techniques associated with continuous portfolio optimization. Fu estimated her equilibrium model (which considers application as a \emph{cost} rather than a constraint) by clustering the schools so that $m=8$, a scale at which enumeration is tractable. We pursue a more general solution.
\else
미국 대학 입학 시장의 균형 분석에서, Fu (2014)는 \eqref{headlineproblem}과(와) 비슷한 부문제를 직면하였으며 이를 ``nontrivial portfolio problem''이라고 부른다 (226). 재정학 분야에서, 고전적 포트폴리오 배분 최적화 모형은 전체 자산에 대한 기대 총이익에서 위험회피 항을 뺌으로 선형식으로 제약된 오목 최대화 문제를 이룬다 (Markowitz 1952; Meucci 2005). 그러나 대학 지원자는 가치가 제일 높은 단일 자산의 기대 가치를 최대화하고자 한다. 어떤 학생이 자신이 $j$번째로 선호하는 학교에 합격하면 $(j+1)$번째 학교의 합격 여부는 무관한 상황이 된다. 이는 학생의 효용을 각 지원 발송의 효용에 대해 오목이 아닌 볼록 함수로 만든다. 또한 입학 시장에서는 전형적으로 대학의 효용과 합격 확률이 서로 반비례하므로 대학 지원 문제는 위험 관리를 포함하게 된다. 특히 선호도가 높으며 붙기 어려운 “상향” 지원 학교(reach school)와 선호도가 낮으며 붙기 쉬운 “안정” 지원 학교(safety school) 사이의 균형을 고려해야 한다 (전민희 2015). 마지막, 대학 지원의 조합적인 본질로 인해 연속적인 포트폴리오 최적화 문제에서 흔히 사용하는 기울기 해법으로는 풀기가 어렵다. Fu는 지원 비용을 제약 조건 대신 목적함수의 한 항으로 모형화했으며, 균형 모형의 모수를 추정하기 위해 $m=8$이 되도록 학교들의 클러스터를 먼저 구성했다. 이는 열거법을 통해 문 쉽게 풀 수 있는 규모이지만 본 연구는 더 일반적인 $m$에 대한 해법을 추구한다.
\fi

\ifen
The integer formulation of the college application problem can be viewed as a kind of binary knapsack problem with a polynomial objective function of degree $m$. Our branch-and-bound and dynamic programming algorithms closely resemble existing algorithms for knapsack problems (Martello and Toth 1990, \S\,2.5--6). In fact, by manipulating the admissions probabilities, the objective function can be made to approximate a linear function of the characteristic vector to an arbitrary degree of accuracy, a fact that we exploit in our NP-completeness proof. Previous research has introduced various forms of stochasticity to the knapsack problem, including variants in which each item's utility takes a known probability distribution (Steinberg and Parks 1979; Carraway et al. 1993) and an online context in which the weight of each item is observed after insertion into the knapsack (Dean et al. 2008). Our problem superficially resembles the preference-order knapsack problem considered by Steinberg and Parks and Carraway et al., but these models lack the college application problem's singular ``maximax'' form. Additionally, unlike those models, we do not attempt to replace the real-valued objective function with a preference order over \emph{outcome distributions,} which introduces technical issues concerning competing notions of stochastic dominance (Sniedovich 1980). We take for granted the student's preferences over \emph{outcomes} (as encoded in the $t_j$-values), and focus instead on an efficient computational approach to the well-defined problem above.
\else
대학 지원 문제의 정수 모형은 $m$차 다항식을 목적함수로 갖춘 일종의 이진 배낭 문제로 볼 수 있다. 본 논문에서 제시하는 분지한계법과 동적 계획 해법은 배낭 문제를 위한 기존 알고리즘과 매우 비슷하다 (Martello와 Toth 1990, \S\,2.5--6). 입학 확률을 적당히 조정하면 특성벡터의 선형 함수를 원래 목적함수로 원하는 만큼의 정확성을 가지고 근사할 수 있으며 NP-completeness 증명에서 이 성질을 활용한다. 배낭 문제에 확률성을 도입한 선행 연구 중, 각 상품의 효용이 정해진 확률 분포로 결정되는 모형(Steinberg와 Parks 1979; Carraway 외 1993), 그리고 배낭에 삽입한 다음에 상품의 무게를 관측할 수 있는 온라인 모형 (Dean 외 2008) 등이 있다. 본 연구가 고려하는 문제는 Steinberg와 Parks 그리고 Carraway 외가 고려한 ``우선순위 배낭 문제''와는 유사성을 가지지만, 우선순위 배낭 문제는 대학 지원 문제의 특색인 `maximax' 형태를 가지지는 않는다. 또한 우선순위 모형과 달리, 본 연구에서 실수값 목적함수를 선호 순위를 정의해야 하는 `결과 분포'로 대체할 필요가 없다. 확률적 우위에 대한 상반된 개념들의 문제를 야기시키기 때문이다 (Sniedovich 1980). 대신 $t_j$-값으로 유도된 학생의 `결과'에 대한 선호 순위를 받아들여 위에서 정의한 것처럼 명확히 정의된 문제를 위한 효율적인 계산법을 지향한다.
\fi

\ifen
We take special interest in the validity of greedy optimization algorithms, such as the algorithm that iteratively adds the school that elicits the greatest increase in the objective function until the budget is exhausted. Greedy algorithms produce a \emph{nested} family of solutions parameterized by the budget $H$: If $H \leq H'$, then the greedy solution for budget $H$ is a subset of the greedy solution for budget $H'$. As Rozanov and Tamir (2020) remark, the knowledge that the optima are nested aids not only in computing the optimal solution, but in the implementation thereof under uncertain information. For example, in the United States, many college applications are due at the beginning of November, and it is typical for students to begin working on their applications during the prior summer because colleges reward students who tailor their essays to the target school. However, students may not know how many schools they can afford to apply to until late October. The nestedness property---or equivalently, the validity of a greedy algorithm---implies that even in the absence of complete budget information, students can begin to carry out the optimal application strategy by writing essays for schools in the order that they enter the optimal portfolio.
\else
본 연구에서 특히 탐욕 해법의 가능성에 관심을 기울인다. 예를 들어 예산이 다 소비될 때까지 목적함수를 가장 많이 증가시키는 학교를 차례대로 추가하는 알고리즘은 일종의 탐욕 해법이다. 탐욕 해법은 예산 $H$로 모수화된 해의 순서를 유도하며 그의 원소들은 `포함 사슬 관계'(nestedness)로 연결된다. 즉 $H \leq H'$일 때, 예산 $H$에 해당하는 탐욕 해는 예산 $H'$에 해당하는 탐욕 해의 부분집합이 된다. Rozanov와 Tamir (2020)가 주장하듯, 최적해가 포함 사슬 관계를 가지면 최적해를 구하는 것뿐 아니라 정보가 불확실한 상황에서 최적해를 구현하는 데에도 유용하다. 가령, 많은 미국 대학의 지원 기한은 11월 초인데 학업 계획서를 학교의 취향에 맞춰서 작성해야 하므로 여름부터 원서를 작성하는 학생이 많다. 그러나 지원 예산은 10월 말까지 모를 수도 있다. 포함 사슬 관계, 또는 탐욕 해법의 타당성은 완전한 예산 정보가 없어도 학교가 최적 포트폴리오에 진입하는 순서대로 원서를 작성하면 최적 전략을 구현해 낼 수 있음을 의미한다.
\fi

\ifen
For certain classes of optimization problems, such as maximizing a submodular set function over a cardinality constraint, a greedy algorithm is known to be a good approximate solution and exact under certain additional assumptions (Fisher et al. 1978). For other problems, notably the binary knapsack problem, the most intuitive greedy algorithm can be made to perform arbitrarily poorly (Vazirani 2001). We show results for the college application problem that mirror those for the knapsack problem: When each $g_j = 1$, the optimal portfolios are nested. This special case mirrors the centralized college application process in Korea, where there is no application fee, but students are allowed to apply to only three schools during the main admissions cycle. Unfortunately, the nestedness property does not hold in the general case, nor does the greedy algorithm offer any performance guarantee. However, we identify a fully polynomial-time approximation scheme (FPTAS) based on fixed-point arithmetic. 
\else
탐욕 알고리즘이 좋은 근사해거나 정확한 알고리즘이 되는 최적화 모형들이 알려져 있다. 집합 크기 제약 하에서 submodular한 집합 함수를 최대화하는 문제가 대표적인 예다 (Fisher 외 1978). 반면에 이진 배낭 문제 같은 경우에서는 가장 직관적인 탐욕 알고리즘이 최적과 거리가 먼 해를 출력하는 예를 만들 수 있다 (Vazirani 2001). 대학 지원 문제와 배낭 문제가 서로 매우 유사함을 보인다. 모든 $g_j=1$인 특수한 경우에서, 최적 포트폴리오가 탐욕 알고리즘의 타당성과 동등한 포함 사슬 관계를 만족하는 것을 증명한다. 이 경우는 지원 비용이 없으며 정시 모집 기간에 학교 3개에만 지원할 수 있는 한국 입학 과정과 같다. 그러나 일반적인 경우에서 포함 사슬 관계가 성립하지 않을뿐더러 탐욕 알고리즘이 어떤 근사 계수를 보장할 수 없을 보일 수 있다. 대신 고정소숫점 산술을 활용한 완전 다항 시간 근사 해법(fully polynomial-time approximation scheme, FPTAS)을 제시한다.
\fi

\ifen
Finally, we remark that the objective function of \eqref{headlineproblem} is a nondecreasing submodular function. However, our research employs more elementary analytical techniques, and our approximation results are tighter than those associated with generic submodular maximization algorithms. For example, a well-known result of Fisher et al. (1978) implies that the greedy algorithm is asymptotically $(1 - 1/e)$-optimal for the $g_j = 1$ case of the college application problem, whereas we show that the same algorithm is exact. As for the general problem, an equivalent approximation ratio is achievable when maximizing a submodular set function over a knapsack constraint using a variety of relaxation techniques (Badanidiyuru and Vondrák 2014; Kulik et al. 2013). But in the college application problem, the existence of the FPTAS supersedes these results.
\else
마지막으로, \eqref{headlineproblem}의 목적함수는 submodular하며 단조 증가하는 함수이다. 그런데, 본 연구는 더 기초적인 분석 기술을 이용하며, 일반적인 submodular 최대화 알고리즘보다 좋은 근사 계수를 얻게 된다. 예를 들어 Fisher 외 (1978)의 잘 알려진 결과에 따라, 대학 지원 문제의 $g_j = 1$인 경우에서 탐욕 해법의 점근적 근사 계수가 $(1 - 1/e)$이 되며, 본 논문에서 같은 해법이 정확함을 보인다. 일반적인 문제에 대해서는, 배낭 제약식 위에서 submodular 집합 함수를 최대하는 문제에서, 다양한 완화 기술을 활용하면 거의 동등한 근사 계수를 이뤄 낼 수 있다 (Badanidiyuru and Vondrák 2014; Kulik et al. 2013). 그러나 대학 지원 문제에서, FPTAS의 존재성은 그보다 강한 결과이다.
\fi

\ifen \subsection{Structure of this paper} \else \subsection{본 논문의 구성}\fi
\ifen Section \ref{preliminaries} introduces some additional notation and assumptions that can be imposed without loss of generality. We also introduce a useful variable-elimination technique and prove that the objective function is submodular. 

In Section \ref{homogappcosts}, we consider the special case where each $g_j = 1$ and $H$ is an integer $h \leq m$.  We show that an intuitive heuristic is in fact a $1/h$-approximation algorithm. Then, we show that the optimal portfolios are nested in the budget constraint, which yields an exact algorithm that runs in $O(hm)$-time. 

In Section \ref{hetappcosts}, we turn to the scenario in which colleges differ in their application fees. We show that the decision form of the portfolio optimization problem is NP-complete through a polynomial reduction from the binary knapsack problem. We provide four algorithms for this more general setup. The first is a branch-and-bound routine. The second is a dynamic program that iterates on total expenditures and produces an exact solution in pseudopolynomial time, namely $O(Hm + m \log m)$. The third is a different dynamic program that iterates on truncated portfolio valuations. It yields a fully polynomial-time approximation scheme that produces a $(1 - \varepsilon)$-optimal solution in $O(m^3 / \varepsilon)$ time.  The fourth is a simulated-annealing heuristic algorithm that demonstrates strong performance in our synthetic instances. 

In Section \ref{numericalexperiments}, we present the results of computational experiments that confirm the validity and time complexity results established in the previous two sections. We also investigate the empirical accuracy of the simulated-annealing heuristic.

In the conclusion, we identify possible improvements to our solution algorithms and introduce a few extensions of the model that capture the features of real-world admissions processes.
\else
\ref{preliminaries}절은 표기법과 일반성을 제한하지 않는 편의적 가정을 제시한다. 유용한 변수 소거 기술도 제시하고 목적함수가 submodularity함을 
증명한다.
 
\ref{homogappcosts}절에서 각 $g_j = 1$이고 $H$가 $h \leq m$이고 자연수인 특수한 경우를 고려한다. 직관적인 휴리스틱 해법이 실제로 $1/h$-근사 해법임을 증명한다. 그다음에 최적 포트폴리오의 예산 제약에 대한 포함 사슬 관계를 보이고 이를 이용하여 $O(hm)$-시간 정확한 해법을 도출한다. 

\ref{hetappcosts}절에서 각 학교의 지원 비용이 동일하지 않은 일반적인 상황을 다룬다. 이진 배낭 문제에서의 다항 변환을 통하여 포트폴리오 최적화 문제가 NP-complete함을 증명한다. 또한 3개의 일반적인 해법을 제안한다. 첫째는 분지한계법이다. 둘째는 총 지출액으로 탐색하는 $O(Hm + m \log m)$과 같은 의사 다항 시간 동적 계획 해법이다. 셋째는 실수값을 분수값으로 내림한 포트폴리오 가치를 탐색하는 또다른 동적 계획 해법이다. 이를 통하여 $O(m^3 / \varepsilon)$-시간 안에 $(1 - \varepsilon)$-근사해를 출력하는 FPTAS를 얻는다. 넷째는 가상 인스턴스를 푸는 성능이 좋은 모의 담금질(simulated annealing) 기반 휴리스틱 알고리즘이다.

\ref{numericalexperiments}절에서 위에서 도출한 타당성과 계산 복잡성을 확인하는 계산 실험 결과를 정리한다. 모의 담금질 휴리스틱의 정확도도 탐구한다.

결론에서, 해법을 개선할 수 있는 요소를 언급하고, 실제 지원 과정의 특징을 묘사하도록 원래 모형을 확장할 수 있는 기술 몇 가지 소개한다.
\fi

\ifen \section{Notation and preliminary results} \else \section{표기법과 예비 결과}\fi\label{preliminaries}
\ifen Before discussing the solution algorithms, we will introduce additional notation and a few preliminary results. For the remainder of the paper, unless otherwise noted, we assume with trivial loss of generality that each $g_j \leq H$, $\sum g_j > H$, each $f_j \in (0, 1]$, and $t_0 < t_1 \leq \cdots \leq t_m$. 
\else
해법은 의논하기 전, 본 절에서 추가적인 표기법은 명시하고 몇 가지 유용한 예비 결과를 정리하고자 한다. 남은 논문에서 다른 언급이 없으면, 일반성을 잃지 않고 각 $g_j \leq H$, $\sum g_j > H$, 각 $f_j \in (0, 1]$, 그리고 $t_0 < t_1 \leq \cdots \leq t_m$이라고 가정한다. 뒤에서 임의의 인스턴스를 $t_0 = 0$이 되도록 변환하는 방법을 제시한다. 
\fi

\ifen \subsection{Refining the objective function} \else \subsection{목적함수의 정제} \fi

\ifen
We refer to the set $\mathcal{X} \subseteq \mathcal{C}$ of schools to which a student applies as her \emph{application portfolio.} The expected utility the student receives from $\mathcal{X}$ is called its \emph{valuation}. 
\else
학생이 지원하는 학교 집합 $\mathcal{X} \subseteq \mathcal{C}$를 `지원 포트폴리오'라고 부르자. $\mathcal{X}$에서 학생이 받는 기대 효용을 포트폴리오의 `가치'라고 한다.
\fi 
\begin{definition}[\ifen Portfolio valuation function\else 포트폴리오 가치 함수\fi]
$v(\mathcal{X}) =  \operatorname{E}\left[\max\bigr\{t_0,
\max\{t_j Z_j : j \in \mathcal{X}\}\bigr\}\right]$.
\end{definition}
\ifen
\noindent It is helpful to define the random variable $X  = \max\{ t_j Z_j : j \in \mathcal{X}\}$ as the utility achieved by the schools in the portfolio, so that when $t_0 = 0$, $v(\mathcal{X}) = \operatorname{E}[X]$. Similar pairs of variables with script and italic names such as $\mathcal{Y}_h$ and $Y_h$ will carry an analogous meaning.
\else
\noindent 편의상 포트폴리오에 속한 학교가 실제로 이루는 효용을 확률 변수 $X  = \max\{ t_j Z_j : j \in \mathcal{X}\}$로 정의한다. 그러면 $t_0 = 0$이면 $v(\mathcal{X}) = \operatorname{E}[X]$임을 알 수 있다. 비슷한 식으로 $\mathcal{Y}_h$와 $Y_h$ 같은 스크립트체와 이탈릭체로 표기한 변수 쌍은 유사한 의미를 가진다.
\fi

\ifen
Given an application portfolio, let $p_j(\mathcal{X})$ denote the probability that the student attends $c_j$. This occurs if and only if she \emph{applies} to $c_j$, is \emph{admitted} to $c_j$, and is \emph{rejected} from any school she prefers to $c_j$; that is, any school with higher index. Hence, for $j= 0\dots m$,
\else
주어진 지원 포트폴리오에 대해 학생이 $c_j$로 진학할 확률을 $p_j(\mathcal{X})$라고 하자. 이 사건이 발생하는 필수충분 조건은 $c_j$에 지원하고, $c_j$에서 합격하고, $c_j$보다 선호하는 (즉, 지표가 더 높은) 학교에서 불합격는 것이다. 따라서  $j= 0\dots m$에 대해
\fi
\begin{align}
p_j(\mathcal{X}) &= 
\begin{cases}
\displaystyle f_j  \prod_{\substack{i \in \mathcal{X}: \\ i > j}} (1 - f_{i}), \quad & j \in \{0\}\cup\mathcal{X}\\
0, \quad & \text{\ifen otherwise\else 그러지 않은 경우.\fi}
\end{cases} 
\end{align}
\ifen
where the empty product equals one and $f_0= 1$. The following proposition follows immediately.
\else
단, 공집합의 곱은 1이면 $f_0= 1$이다. 다음 제의는 직설적인 결과이다.
\fi

\begin{proposition}[\ifen Closed form of portfolio valuation function\else 포트폴리오 가치 함수의 다른 식\fi]
\begin{align}
v(\mathcal{X}) &= \sum_{j=0}^m t_j p_j(\mathcal{X}) = \sum_{j\in\{0\}\cup\mathcal{X}} \Bigl( f_j t_j \prod_{\substack{i \in \mathcal{X}: \\ i > j}} (1 - f_{i}) \Bigr)  \label{closedformportfoliovaluationX}
\end{align}
\end{proposition}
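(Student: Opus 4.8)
The plan is to recognize that the random variable $M = \max\{t_0, \max\{t_j Z_j : j \in \mathcal{X}\}\}$ is a simple (finitely-valued) random variable whose possible values are exactly the utilities $t_j$ for $j \in \{0\}\cup\mathcal{X}$, and to compute its distribution directly. By the definition of expectation for a simple random variable, $v(\mathcal{X}) = \operatorname{E}[M] = \sum_{j} t_j \operatorname{P}(M = t_j)$, so the entire task reduces to showing that $\operatorname{P}(M = t_j)$ coincides with the attendance probability $p_j(\mathcal{X})$ derived just above the statement. The second equality in the proposition is then purely cosmetic: substitute the closed form for $p_j(\mathcal{X})$ and discard the terms with $j \notin \{0\}\cup\mathcal{X}$, which vanish by definition.

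To identify the event $\{M = t_j\}$, I would adopt the convention $Z_0 \equiv 1$ and $f_0 = 1$ from the text, so that $t_0$ participates in the maximization on equal footing with the applied schools. The key structural observation is that, because the utilities are ordered as $t_0 < t_1 \leq \cdots \leq t_m$, the maximum $M$ is attained at the \emph{highest-indexed} school to which the student is admitted (counting the null option $0$, to which she is always ``admitted''). Concretely, let $j^\star$ be the largest index in $\{0\}\cup\mathcal{X}$ with $Z_{j^\star} = 1$; then $M = t_{j^\star}$, and the event that the student attends $c_j$ is exactly $\{j^\star = j\}$, i.e. the event that $Z_j = 1$ while $Z_i = 0$ for every $i \in \mathcal{X}$ with $i > j$. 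These events are disjoint across $j$ and exhaust the sample space.

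The final step is to evaluate $\operatorname{P}(j^\star = j)$ using independence of the $Z_i$. For $j \in \{0\}\cup\mathcal{X}$, this event factors as admission to $c_j$ (contributing $f_j$) and rejection from each preferred school in the portfolio (contributing $\prod_{i \in \mathcal{X}: i > j}(1 - f_i)$), which is precisely $p_j(\mathcal{X})$; for any other $j$ the event is empty and $p_j(\mathcal{X}) = 0$. Since the closed form for $p_j(\mathcal{X})$ has already been established, the proposition follows immediately once the identification $\operatorname{P}(M = t_j) = p_j(\mathcal{X})$ is in hand.

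I do not expect a genuine obstacle here; the only point requiring care is tie-breaking when some of the $t_j$ coincide. In that case the events $\{M = t_j\}$ for distinct indices $j$ sharing a common utility value overlap, so one cannot read off $\operatorname{P}(M = t_j)$ naively from the value alone. The attendance-event formulation sidesteps this by partitioning the sample space on the \emph{index} $j^\star$ rather than on the \emph{value} $t_{j^\star}$: each attendance event is disjoint and contributes its own $t_j$ (equal across a tie) weighted by the well-defined probability $p_j(\mathcal{X})$, so the summation $\sum_j t_j p_j(\mathcal{X})$ still computes $\operatorname{E}[M]$ correctly.
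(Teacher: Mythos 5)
Your proposal is correct and follows essentially the same route as the paper, which treats the proposition as immediate from the attendance-probability derivation: the events ``student attends $c_j$'' (i.e., $Z_j = 1$ and $Z_i = 0$ for all $i \in \mathcal{X}$ with $i > j$) partition the sample space, carry probability $p_j(\mathcal{X})$ by independence, and yield $v(\mathcal{X}) = \sum_j t_j p_j(\mathcal{X})$. Your explicit handling of ties among the $t_j$ by indexing on $j^\star$ rather than on values is a careful touch the paper leaves implicit, but it is the same argument.
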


\ifen
Next, we show that without loss of generality, we may assume that $t_0 = 0$.
\else
그다음에, 일반성을 제한하지 않고 $t_0 = 0$이라고 가정할 수 있음을 보이자.
\fi
\begin{lemma} \label{assumetzerozero}
\ifen 
For some $\gamma \leq t_0$, let $\bar t_j = t_j - \gamma$ for $j = 0 \dots m$. Then $v(\mathcal{X}; \bar t_j) = v(\mathcal{X};  t_j) -  \gamma$ regardless of $\mathcal{X}$. 
\else
어떤 $\gamma \leq t_0$을 선택하고 $j \dots m$에 대해 $\bar t_j = t_j - \gamma$라고 하자. 그러면 $\mathcal{X}$를 막론하고 $v(\mathcal{X}; \bar t_j) = v(\mathcal{X};  t_j) -  \gamma$가 성립한다.
\fi
\end{lemma}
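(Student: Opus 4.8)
The plan is to work from the closed form \eqref{closedformportfoliovaluationX} rather than from the original expectation, because the ``maximax'' expression does not shift cleanly: replacing $t_j$ by $t_j - \gamma$ leaves each product $t_j Z_j$ unchanged on the event $Z_j = 0$, so one cannot simply factor $-\gamma$ out of the inner maximum. The closed form $v(\mathcal{X}; t_j) = \sum_{j=0}^m t_j\, p_j(\mathcal{X})$ sidesteps this entirely, since the weights $p_j(\mathcal{X})$ depend only on the admission probabilities $f_i$ and on the portfolio $\mathcal{X}$, never on the utilities. The same weights therefore govern both the original and the shifted instance, and the dependence on the $t_j$ is linear.

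First I would check that the shifted instance is admissible, which is exactly what the hypothesis $\gamma \leq t_0$ buys. Because $t_0 \leq t_1 \leq \cdots \leq t_m$, each $\bar t_j = t_j - \gamma \geq t_j - t_0 \geq 0$, so $0 \leq \bar t_0 \leq \cdots \leq \bar t_m$. This nonnegativity and monotonicity are precisely the conditions under which the passage from the expectation to the closed form \eqref{closedformportfoliovaluationX} is valid, so I may write $v(\mathcal{X}; \bar t_j) = \sum_{j=0}^m \bar t_j\, p_j(\mathcal{X})$ with the \emph{same} probabilities $p_j(\mathcal{X})$ that appear in the untransformed problem.

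The crux is the identity $\sum_{j=0}^m p_j(\mathcal{X}) = 1$, which I would establish probabilistically rather than by manipulating the product formula. For any realization of $(Z_1, \dots, Z_m)$, the student enrolls in exactly one option: the highest-indexed school of $\mathcal{X}$ to which she is admitted, or the ``no-college'' option indexed $0$ if she is admitted nowhere. Hence the events $\{\text{the student attends } c_j\}$, ranging over $j \in \{0\} \cup \mathcal{X}$, partition the sample space, their probabilities $p_j(\mathcal{X})$ sum to one, and $p_j(\mathcal{X}) = 0$ for every other index. (One could instead induct on $|\mathcal{X}|$, telescoping the product formula by peeling off the lowest-index school, but the partition argument is cleaner.) This is the step I expect to demand the most care, chiefly in observing that the tie-breaking by \emph{index} keeps the events disjoint even when several $t_j$ coincide, so the conclusion is unaffected by ties in the utilities.

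With these pieces assembled the result follows immediately by linearity:
\[
v(\mathcal{X}; \bar t_j) = \sum_{j=0}^m (t_j - \gamma)\, p_j(\mathcal{X}) = \sum_{j=0}^m t_j\, p_j(\mathcal{X}) - \gamma \sum_{j=0}^m p_j(\mathcal{X}) = v(\mathcal{X}; t_j) - \gamma,
\]
and this holds for every $\mathcal{X}$ as claimed, the portfolio entering only through the common weights $p_j(\mathcal{X})$.
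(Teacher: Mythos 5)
Your proposal is correct and follows essentially the same route as the paper: both work from the closed form $v(\mathcal{X}) = \sum_{j} t_j\, p_j(\mathcal{X})$, invoke $\sum_{j=0}^m p_j(\mathcal{X}) = 1$, and finish by linearity. The only difference is that the paper asserts the sum-to-one identity ``by definition'' while you justify it via the partition of the sample space (and also verify admissibility of the shifted instance), which is a fuller but not substantively different argument.
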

\begin{proof}
\ifen
By definition, $\sum_{j=0}^m p_j(\mathcal{X}) = \sum_{j \in \{0\}\cup\mathcal{X}} p_j(\mathcal{X}) = 1$. Therefore
\else
정의에 따라 $\sum_{j=0}^m p_j(\mathcal{X}) = \sum_{j \in \{0\}\cup\mathcal{X}} p_j(\mathcal{X}) = 1$이다. 그러면 다음 수식으로 증명을 완성할 수 있다.
\fi
\begin{align}
\begin{split}
v(\mathcal{X}; \bar t_j) &= \sum_{j\in \{0\}\cup\mathcal{X}}  \bar t_j p_j(\mathcal{X})
=\sum_{j\in \{0\}\cup\mathcal{X}} (t_j - \gamma) p_j(\mathcal{X}) \\
&=\sum_{j\in \{0\}\cup\mathcal{X}} t_j p_j(\mathcal{X})  - \gamma 
= v(\mathcal{X}; t_j) - \gamma
\end{split} \ifen\else\qedhere\fi
\end{align}
\ifen which completes the proof.\fi
\end{proof}

\ifen \subsection{An elimination technique} \else \subsection{변수 소거 기법} \fi
\ifen
Now, we present a variable-elimination technique that will prove useful throughout the paper.\footnote{We thank Yim Seho for pointing out this useful transformation.} Suppose that the student has already resolved to apply to $c_k$, and the remainder of her decision consists of determining which \emph{other} schools to apply to. Writing her total application portfolio as $\mathcal{X} = \mathcal{Y} \cup \{k\}$, we devise a function $w(\mathcal{Y})$ that orders portfolios according to how well they ``complement'' the singleton portfolio $\{k\}$. Specifically, the difference between $v(\mathcal{Y} \cup\{k\})$ and $w(\mathcal{Y})$ is the constant $f_k t_k$.
\else
본 항에서 남은 논문에서 활용되는 변수 소거 기술을 소개한다.\footnote{이 유용한 변환을 발견한 임세호에게 감사한다.} 학생이 $c_k$에 지원하기로 결심하고, 남은 결정 공간은 어떤 다른 학교에 지원하는 것으로 제한된다고 하자. 전체 지원 포트폴리오를 $\mathcal{X} = \mathcal{Y} \cup \{k\}$일 때, $\mathcal{Y} \subseteq \mathcal{C} \setminus \{k\}$인 포트폴리오와 $\{k\}$ 사이의 \mbox{`보완성'을} 평가하는 함수 $w(\mathcal{Y})$를 도출할 수 있다. 단, $v(\mathcal{Y} \cup\{k\})$와 $w(\mathcal{Y})$ 사이에서 $f_k t_k$ 같은 상수 차이가 발생한다.
\fi 

\ifen
To construct $w(\mathcal{Y})$, let $\tilde t_j$ denote the expected utility the student receives from school $c_j$ \emph{given} that she has been admitted to $c_j$ and applied to $c_k$. For $j < k$ (including $j = 0$), this is $\tilde t_j = (1- f_k) t_j + f_k t_k$; for $j > k $, this is $\tilde t_j = t_j$. This means that 
\begin{equation}\label{Vyastildet}
v(\mathcal{Y}\cup\{k\}) = \sum_{j \in \{0\} \cup \mathcal{Y}} \tilde t_j p_j(\mathcal{Y}).\end{equation}
The transformation to $\tilde t$ does not change the order of the $t_j$-values. Therefore, the expression on the right side of \eqref{Vyastildet} is itself a portfolio valuation function. In the corresponding market, $t$ is replaced by $\tilde t$ and $\mathcal{C}$ is replaced by $\mathcal{C}\setminus\{k\}$. To restore our convention that $t_0 = 0$, we obtain $w(\mathcal{Y})$ by taking $\bar t_j = \tilde t_j - \tilde t_0$ for all $j \neq k$ and letting
\begin{equation}  \label{wYvXminusconst}
w(\mathcal{Y})
= \sum_{j \in \{0\} \cup \mathcal{Y}} \bar t_j p_j(\mathcal{Y})
= \sum_{j \in \{0\} \cup \mathcal{Y}} \tilde t_j p_j(\mathcal{Y})- \tilde t_0
= v(\mathcal{Y}\cup\{k\}) -  f_k t_k \end{equation}
where the second equality follows from Lemma \ref{assumetzerozero}. The validity of this transformation is summarized in the following theorem, where we write $v(\mathcal{X}; \bar t)$ instead of $w(\mathcal{Y})$ to emphasize that $w(\mathcal{Y})$ is, in form, a portfolio valuation function. 
\else
$w(\mathcal{Y})$를 구성하기 위해, 알마가 $c_j$에 합격한 상황에서 $c_k$에 지원했을 때 $c_j$에서 받는 조건부 기대 효용을 $\tilde t_j$라고 하자. $j = 0$을 포함한 $j < k$에 대해 이값은 $\tilde t_j = t_j (1- f_k) + t_k f_k$이며, $j > k $에 대해 $\tilde t_j = t_j$임을 알 수 있다. 따라서 
\begin{equation}\label{Vyastildet}
v(\mathcal{Y}\cup\{k\}) = \sum_{j \in \{0\} \cup \mathcal{Y}} \tilde t_j p_j(\mathcal{Y}).\end{equation}
$\tilde t$로 변환하면 $t_j$의 순서가 변하지 않기 때문에 \eqref{Vyastildet}의 우변 그 자체가 포트폴리오 가치 함수이다. $t$를 $\tilde t$로, $\mathcal{C}$를 $\mathcal{C}\setminus \{k\}$로 대체하면 대응하는 시장이 된다. 그다음에 $t_0 = 0$이 되도록 $\bar t_j = \tilde t_j - \tilde t_0$으로 정의하고 $w(\mathcal{Y})$를 다음처럼 얻을 수 있다. 
\begin{equation}  \label{wYvXminusconst}
w(\mathcal{Y})
= \sum_{j \in \{0\} \cup \mathcal{Y}} \bar t_j p_j(\mathcal{Y})
= \sum_{j \in \{0\} \cup \mathcal{Y}} \tilde t_j p_j(\mathcal{Y})- \tilde t_0
= v(\mathcal{Y}\cup\{k\}) -  f_k t_k \end{equation}
두 번째 등호는 기본 정리 \ref{assumetzerozero}에 따라 성립한다. 다음 정리는 이 변환의 타당성을 의미한다. 단, $w(\mathcal{Y})$가 포트폴리오 가치 함수 형태를 가지는 것을 강조하기 위해 $w(\mathcal{Y})$ 대신 $v(\mathcal{X}; \bar t)$처럼 표기한다.
\fi

\begin{lemma}[\ifen Eliminate $c_k$\else $c_k$ 소거 기법\fi] \label{eliminationtheorem}
\ifen For $\mathcal{X} \subseteq \mathcal{C} \setminus \{k\}$, $v(\mathcal{X}\cup\{k\}; t)  = v(\mathcal{X}; \bar t) + f_k t_k$, where
\else $\mathcal{X} \subseteq \mathcal{C} \setminus \{k\}$에 대해, $v(\mathcal{X}\cup\{k\}; t)  = v(\mathcal{X}; \bar t) + f_k t_k$이다. 단,\fi
\begin{align}\label{howtotransformtj}
\bar t_j = 
\begin{cases}
(1 - f_k) t_j, \quad & t_j \leq t_k \\
t_j - f_k t_k, \quad& t_j > t_k.
\end{cases}
\end{align}
\end{lemma}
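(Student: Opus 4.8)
The plan is to recognize that the lemma is precisely the combination of the two displayed identities \eqref{Vyastildet} and \eqref{wYvXminusconst} that precede it, so the real work reduces to justifying \eqref{Vyastildet}; everything else is bookkeeping. I would open by fixing the normalization $t_0 = 0$, which is legitimate by Lemma \ref{assumetzerozero} and is exactly what makes the stated form of $\bar t_j$ (with no additive $t_0$ term) correct. Granting \eqref{Vyastildet}, i.e.\ $v(\mathcal{Y}\cup\{k\};t) = \sum_{j \in \{0\}\cup\mathcal{Y}} \tilde t_j\, p_j(\mathcal{Y})$, the passage to $\bar t_j = \tilde t_j - \tilde t_0$ is the content of Lemma \ref{assumetzerozero} applied in the $\tilde t$-market with $\gamma = \tilde t_0$; since $\tilde t_0 = (1-f_k)t_0 + f_k t_k = f_k t_k$ under $t_0 = 0$, this yields the additive constant $f_k t_k$. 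Evaluating $\tilde t_j - \tilde t_0$ in the two cases $t_j \le t_k$ and $t_j > t_k$ then reproduces \eqref{howtotransformtj} verbatim.

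The heart of the argument is \eqref{Vyastildet}, which I would establish by conditioning rather than by brute expansion. Condition on the event that, within the portfolio $\mathcal{Y}$ alone, the best school to which the student is admitted is $c_j$ (with $j=0$ denoting the outside option); these events partition the sample space and have probabilities $p_j(\mathcal{Y})$. Given such an event, the student's realized utility in the larger portfolio $\mathcal{Y}\cup\{k\}$ is determined entirely by the admission outcome $Z_k$, which is independent of all admission outcomes in $\mathcal{Y}$: if $t_j > t_k$ she attends $c_j$ regardless of $Z_k$, contributing $t_j$; if $t_j \le t_k$ she attends $c_k$ when $Z_k = 1$ (probability $f_k$) and $c_j$ otherwise, so her conditional expected utility is $(1-f_k)t_j + f_k t_k$. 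In each case this conditional expectation equals $\tilde t_j$, and summing $\tilde t_j\, p_j(\mathcal{Y})$ over the partition gives \eqref{Vyastildet}. (An equivalent algebraic route is to expand \eqref{closedformportfoliovaluationX} for $\mathcal{Y}\cup\{k\}$: every term of index below $k$ acquires a factor $(1-f_k)$, every term of index above $k$ is unchanged, and the lone term $t_k\, p_k(\mathcal{Y}\cup\{k\})$ is redistributed using the identity $\sum_{j<k} p_j(\mathcal{Y}) = \prod_{i\in\mathcal{Y}:\,i>k}(1-f_i)$, the probability of receiving no admission above $c_k$. I find the conditioning argument cleaner and less error-prone.)

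The step I would watch most carefully is the boundary case $t_j = t_k$, which is exactly where the index-based reasoning behind $\tilde t_j$ and the value-based case split in \eqref{howtotransformtj} could diverge. Here the check is reassuring: when $t_j = t_k$ the first branch gives $\bar t_j = (1-f_k)t_j$ while the second gives $t_j - f_k t_k = (1-f_k)t_k = (1-f_k)t_j$, so the two branches coincide, and correspondingly the conditioning argument assigns the tied school utility $t_j$ either way, with no effect on the realized maximum. I would also note in passing that $\tilde t$ preserves the nondecreasing order of the utilities (the map is continuous and monotone across $t_j = t_k$), so $v(\cdot\,;\tilde t)$ and hence $v(\cdot\,;\bar t)$ are genuine portfolio valuation functions over $\mathcal{C}\setminus\{k\}$, and the probabilities $p_j(\mathcal{Y})$ are unchanged because they depend only on the $f_j$ and the index order, not on the utilities. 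Assembling \eqref{Vyastildet} with the shift of Lemma \ref{assumetzerozero} then delivers $v(\mathcal{X}\cup\{k\};t) = v(\mathcal{X};\bar t) + f_k t_k$, as claimed.
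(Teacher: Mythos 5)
Your proposal is correct and follows essentially the same route as the paper: the paper's own proof is a one-line remark that \eqref{howtotransformtj} is the composition of the $t \to \tilde t$ and $\tilde t \to \bar t$ transformations, with the substance (equation \eqref{Vyastildet}, justified by interpreting $\tilde t_j$ as a conditional expected utility, and the shift by $\tilde t_0 = f_k t_k$ via Lemma \ref{assumetzerozero}) carried by the discussion preceding the lemma. You have simply made that discussion rigorous --- the explicit law-of-total-expectation argument for \eqref{Vyastildet} and the check that the index-based and value-based case splits agree at ties $t_j = t_k$ are exactly the verifications the paper leaves to the reader.
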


\begin{proof}
\ifen It is easy to verify that \eqref{howtotransformtj} is the composition of the two transformations (from $t$ to $\tilde t$, and from $\tilde t$ to $\bar t$) discussed above.
\else \eqref{howtotransformtj}이(가) 위에서 논의한 2개의 변환($t$에서 $\tilde t$로, 그리고 $\tilde t$에서 $\bar t$로)의 합성임을 쉽게 확인할 수 있다. \fi
\end{proof}

\ifen
\noindent The transformation \eqref{howtotransformtj} can be applied iteratively to accommodate the case where the student has already resolved to apply to multiple schools.
\else
\noindent 학생이 다수의 학교에 지원하기로 결심한 경우를 반영하기 위해, \eqref{howtotransformtj}의 변화를 반복적으로 적용할 수 있다.
\fi

\ifen \subsection{Submodularity of the objective} \else \subsection{목적함수의 submodularity} \fi
\ifen 
Now, we show that the portfolio valuation function is submodular. This result is primarily of taxonomical interest and may be safely skipped. Our solution algorithms for the college application problem are better than generic algorithms for submodular maximization, and we prove their validity using elementary analysis.
\else
목적함수가 submodular함을 보이자. 이 결과는 주로 분류학적인 흥미를 가지며 건너뛰어도 연속성에 지장을 주지 않는다. 일반적인 submodular 최대화 알고리즘보다 본 연구가 제시하는 대학 지원 최적화 해법이 더 좋으며, 기초적인 분석을 통해 그의 타당성을 증명한다.
\fi

\ifen
\begin{definition}[Submodular set function]
Given a ground set $\mathcal{C}$ and function $v : 2^{\mathcal{C}} \mapsto \mathbb{R}$, $v(\mathcal{X})$ is called a \emph{submodular set function} if and only if $v(\mathcal{X}) + v(\mathcal{Y}) \geq v(\mathcal{X}\cup\mathcal{Y}) + v(\mathcal{X}\cap\mathcal{Y})$
for all $\mathcal{X}, \mathcal{Y} \subseteq \mathcal{C}$. Furthermore, if $ v(\mathcal{X}\cup\{k\}) - v(\mathcal{X}) \geq 0$ for all $\mathcal{X} \subset \mathcal{C}$ and $k \in \mathcal{C} \setminus \mathcal{X}$, $v(\mathcal{X})$ is said to be a \emph{nondecreasing} submodular set function.
\end{definition}
\else
\begin{definition}[Submodular한 집합 함수]
고려 집합 $\mathcal{C}$ 그리고 함수 $v : 2^{\mathcal{C}} \mapsto \mathbb{R}$가 주어질 때, $v(\mathcal{X})$가 submodular한 집합 함수가 되는 필수충분 조건은, 모든 $\mathcal{X}, \mathcal{Y} \subseteq \mathcal{C}$에 대해 $v(\mathcal{X}) + v(\mathcal{Y}) \geq v(\mathcal{X}\cup\mathcal{Y}) + v(\mathcal{X}\cap\mathcal{Y})$이 성립하는 것이다. 또한 모든 $\mathcal{X} \subset \mathcal{C}$와 $k \in \mathcal{C} \setminus \mathcal{X}$에 대해 $v(\mathcal{X}\cup\{k\}) - v(\mathcal{X}) \geq 0$이면, $v(\mathcal{X})$가 단조 증가하는 submodular한 집합 함수라고 부른다.
\end{definition}
\fi

\ifen
\begin{theorem}
The college application portfolio valuation function
$v(\mathcal{X})$ 
is a nondecreasing submodular set function.
\end{theorem}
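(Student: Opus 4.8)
The plan is to avoid manipulating the closed-form product in \eqref{closedformportfoliovaluationX} directly, which is unwieldy, and instead return to the definition of $v$ as an expectation and argue one realization at a time. For a fixed outcome $\omega$ of the independent Bernoulli variables $(Z_j)$, write $a_j = t_j Z_j(\omega) \geq 0$ and define the \emph{deterministic} set function $g_\omega(\mathcal{S}) = \max\bigl(\{t_0\} \cup \{a_j : j \in \mathcal{S}\}\bigr)$, so that $v(\mathcal{X}) = \operatorname{E}[g_\omega(\mathcal{X})]$. Since expectation over the finite sample space is a nonnegative combination, and both monotonicity and the submodular inequality are preserved under nonnegative combinations, it suffices to show that each $g_\omega$ is nondecreasing and submodular.

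Monotonicity (the nondecreasing property) is immediate: for $k \notin \mathcal{S}$ we have $g_\omega(\mathcal{S}\cup\{k\}) = \max\bigl(g_\omega(\mathcal{S}), a_k\bigr) \geq g_\omega(\mathcal{S})$, because the maximum is taken over a superset. Averaging over $\omega$ gives $v(\mathcal{X}\cup\{k\}) \geq v(\mathcal{X})$.

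For submodularity I would verify the defining inequality directly for $g_\omega$. The function is monotone in its argument, so $g_\omega(\mathcal{X}\cap\mathcal{Y}) \leq \min\bigl(g_\omega(\mathcal{X}), g_\omega(\mathcal{Y})\bigr)$, since $\mathcal{X}\cap\mathcal{Y}$ is contained in both $\mathcal{X}$ and $\mathcal{Y}$; and the maximum over a union splits as $g_\omega(\mathcal{X}\cup\mathcal{Y}) = \max\bigl(g_\omega(\mathcal{X}), g_\omega(\mathcal{Y})\bigr)$. Using the elementary identity $\max(p,q) + \min(p,q) = p + q$, I would then chain
\[
g_\omega(\mathcal{X}\cup\mathcal{Y}) + g_\omega(\mathcal{X}\cap\mathcal{Y}) \leq \max\bigl(g_\omega(\mathcal{X}), g_\omega(\mathcal{Y})\bigr) + \min\bigl(g_\omega(\mathcal{X}), g_\omega(\mathcal{Y})\bigr) = g_\omega(\mathcal{X}) + g_\omega(\mathcal{Y}),
\]
which is exactly submodularity of $g_\omega$. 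Averaging over $\omega$ with weights $\operatorname{P}(\omega)$ transfers both inequalities to $v$ and completes the proof.

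The argument has no real obstacle once the right viewpoint is adopted; the only thing to get right is the reduction itself — recognizing that monotonicity and submodularity pass through the expectation, so that the entire probabilistic structure (the independence of the $Z_j$ and the specific form of $p_j(\mathcal{X})$) is irrelevant and one may work pointwise with the purely combinatorial ``max of a ground set'' function. I would flag that an attempt to prove the set inequality directly from \eqref{closedformportfoliovaluationX} by expanding the products would incur severe bookkeeping; it is precisely the realization-wise reduction that keeps the proof to a few lines.
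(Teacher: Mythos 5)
Your proof is correct, but it takes a genuinely different route from the paper's. The paper does not work realization-by-realization: it invokes the Nemhauser--Wolsey characterization of submodularity via diminishing increments, $v(\mathcal{X}\cup\{j\}) - v(\mathcal{X}) \geq v(\mathcal{X}\cup\{j,k\}) - v(\mathcal{X}\cup\{k\})$, and then uses the elimination technique of Lemma \ref{eliminationtheorem} to strip away the schools in $\mathcal{X}$, reducing the whole inequality to the two-school statement $\operatorname{E}[\,\bar t_j Z_j\,] + \operatorname{E}[\,\bar t_k Z_k\,] \geq \operatorname{E}\bigl[\max\{\bar t_j Z_j, \bar t_k Z_k\}\bigr]$, which is immediate. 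Your argument instead conditions on each outcome $\omega$, observes that the deterministic function $\mathcal{S} \mapsto \max\bigl(\{t_0\}\cup\{t_j Z_j(\omega) : j \in \mathcal{S}\}\bigr)$ is monotone and submodular (via $\max$ over unions and the $\max + \min$ identity), and notes that both properties survive the nonnegative combination defining the expectation. What your approach buys: it is more elementary and strictly more general, since it never uses independence of the $Z_j$ --- so it proves submodularity even for arbitrarily correlated admissions outcomes --- and it sidesteps the closed form \eqref{closedformportfoliovaluationX} entirely, exactly as you flagged. What the paper's approach buys in context: it reuses the elimination transformation that is the workhorse of the rest of the paper (the nestedness theorem and the branch-and-bound bounds), so the marginal cost of the proof is essentially zero there, and it exercises machinery the reader must absorb anyway. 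Both proofs are a few lines once their respective key ideas are in hand.
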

\else
\begin{theorem}
대학 지원의 포트폴리오 가치 함수 $v(\mathcal{X})$는 submodular한 집합 함수이다.
\end{theorem}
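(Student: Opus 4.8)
The plan is to prove both properties \emph{pointwise}---that is, for each fixed realization of the Bernoulli vector $(Z_1, \dots, Z_m)$---and then recover the statement for $v$ by taking expectations. Fix an outcome $\omega$ and write $a_j = t_j Z_j(\omega)$ for $j \in \mathcal{C}$ together with $a_0 = t_0$, so that the integrand becomes the deterministic set function $M(\mathcal{X}) = \max\{a_j : j \in \{0\}\cup\mathcal{X}\}$. The key observation is that $M$ is itself a nondecreasing submodular set function, regardless of the values $a_j$; this is the familiar submodularity of the maximum. Since $v(\mathcal{X}) = \operatorname{E}[M(\mathcal{X})]$ and expectation is a convex combination over outcomes, any inequality that holds for $M$ at every $\omega$ is inherited by $v$. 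Note that this argument does not require the normalization $t_0 = 0$, since adding a constant to all utilities shifts $M$ uniformly and leaves both properties intact.

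Monotonicity of $M$ is immediate: enlarging $\mathcal{X}$ can only enlarge the index set over which the maximum is taken, so $M(\mathcal{X}\cup\{k\}) \geq M(\mathcal{X})$. For submodularity I would use the union identity $M(\mathcal{X}\cup\mathcal{Y}) = \max\{M(\mathcal{X}), M(\mathcal{Y})\}$, which holds because the maximum distributes over a union of index sets. Assume without loss of generality that $M(\mathcal{X}) \geq M(\mathcal{Y})$. Then the submodular inequality $M(\mathcal{X}) + M(\mathcal{Y}) \geq M(\mathcal{X}\cup\mathcal{Y}) + M(\mathcal{X}\cap\mathcal{Y})$ collapses to $M(\mathcal{Y}) \geq M(\mathcal{X}\cap\mathcal{Y})$, which is again just monotonicity, since $\mathcal{X}\cap\mathcal{Y} \subseteq \mathcal{Y}$.

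Finally I would lift both inequalities to $v$. Applying the pointwise bounds inside the expectation gives $v(\mathcal{X}\cup\{k\}) - v(\mathcal{X}) = \operatorname{E}[M(\mathcal{X}\cup\{k\}) - M(\mathcal{X})] \geq 0$ for the monotone part, and $v(\mathcal{X}) + v(\mathcal{Y}) - v(\mathcal{X}\cup\mathcal{Y}) - v(\mathcal{X}\cap\mathcal{Y}) = \operatorname{E}[M(\mathcal{X}) + M(\mathcal{Y}) - M(\mathcal{X}\cup\mathcal{Y}) - M(\mathcal{X}\cap\mathcal{Y})] \geq 0$ for the submodular part, as required.

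I do not anticipate a genuine obstacle: the entire content is the realization-wise submodularity of the maximum, and the monotonicity and linearity of expectation do the rest. The only point requiring care is the case split $M(\mathcal{X}) \geq M(\mathcal{Y})$, which is harmless by symmetry, and the observation that the max-over-union identity lets the inequality reduce to monotonicity. An alternative route would compute the marginal gain $v(\mathcal{X}\cup\{k\}) - v(\mathcal{X})$ explicitly via the elimination Lemma \ref{eliminationtheorem} and verify the diminishing-returns characterization of submodularity directly, but the pointwise argument is shorter and avoids the closed-form algebra of \eqref{closedformportfoliovaluationX}.
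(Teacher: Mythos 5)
Your proposal is correct, but it takes a genuinely different route from the paper. You argue \emph{pointwise}: for each realization of $(Z_1,\dots,Z_m)$ the deterministic function $M(\mathcal{X}) = \max\{a_j : j \in \{0\}\cup\mathcal{X}\}$ is nondecreasing and submodular (via the identity $M(\mathcal{X}\cup\mathcal{Y}) = \max\{M(\mathcal{X}),M(\mathcal{Y})\}$, which collapses the submodular inequality to monotonicity), and then linearity and monotonicity of expectation transfer both properties to $v = \operatorname{E}[M]$. The paper instead verifies the Nemhauser--Wolsey diminishing-returns condition $v(\mathcal{X}\cup\{j\}) - v(\mathcal{X}) \geq v(\mathcal{X}\cup\{j,k\}) - v(\mathcal{X}\cup\{k\})$, using its elimination lemma (Lemma \ref{eliminationtheorem}) to strip out $\mathcal{X}$ and reduce everything to the two-school inequality $\operatorname{E}[\,\bar t_j Z_j\,] + \operatorname{E}[\,\bar t_k Z_k\,] \geq \operatorname{E}\bigl[\max\{\bar t_j Z_j, \bar t_k Z_k\}\bigr]$. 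Your argument is shorter and strictly more general: it never uses independence of the $Z_j$, the Bernoulli structure, or the normalization $t_0 = 0$, whereas the paper's reduction is tied to that machinery. What the paper's route buys is reuse: the elimination lemma is its workhorse (it reappears in the nestedness proof, the branch-and-bound bounds, and Theorem \ref{concavityinh}), so proving submodularity through it costs essentially nothing once that lemma exists, and it exhibits the marginal-gain structure that the later results exploit. Both proofs are complete; yours would be the preferred one in a self-contained or more general setting.
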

\fi

\ifen
\begin{proof}
It is self-evident that $v(\mathcal{X})$ is nondecreasing. To establish its submodularity, we apply proposition 2.1.iii of Nemhauser and Wolsey (1978) and show that
\begin{equation}\label{nemhauseriii}
v(\mathcal{X} \cup \{j\}) - v(\mathcal{X}) \geq 
v(\mathcal{X} \cup \{j, k\}) - v(\mathcal{X} \cup \{k\})
\end{equation}
for $\mathcal{X} \subset \mathcal{C}$ and $j \neq k \in \mathcal{C} \setminus \mathcal{X}$. By Lemma \ref{eliminationtheorem}, we can repeatedly eliminate the schools in $\mathcal{X}$ according to \eqref{howtotransformtj} to obtain a portfolio valuation function $w(\mathcal{Y})$ with parameter $\bar t$ such that $w(\mathcal{Y}) = v(\mathcal{X} \cup \mathcal{Y}) + \text{const.}$ for any $\mathcal{Y} \subseteq \mathcal{C} \setminus \mathcal{X}$. Therefore, \eqref{nemhauseriii} is equivalent to
\begin{align}
& w(\{j\}) - w(\varnothing) \geq w(\{j, k\}) - w(\{k\}) \\
\iff \qquad &w(\{j\})  +  w(\{k\})  \geq w(\{j, k\})  \\
\iff \qquad &\operatorname{E}[\,\bar t_j Z_j\,] + \operatorname{E}[\,\bar t_k Z_k\,] 
\geq \operatorname{E}\bigl[\max\{ \bar t_j Z_j, \bar t_k Z_k \} \bigr]
\end{align}
which is plainly true. 
\end{proof}
\else
\begin{proof}
$v(\mathcal{X})$가 단조 증가하는 것은 자명하다. Submodularity를 보이기 위해, Nemhauser와 Wolsey (1978)의 제의 2.1.iii을 적용하여 모든 $\mathcal{X} \subset \mathcal{C}$ 그리고 $j \neq k \in \mathcal{C} \setminus \mathcal{X}$에 대해 다음이 성립함을 보이자.
\begin{equation}\label{nemhauseriii}
v(\mathcal{X} \cup \{j\}) - v(\mathcal{X}) \geq 
v(\mathcal{X} \cup \{j, k\}) - v(\mathcal{X} \cup \{k\})
\end{equation}
기본 정리 \ref{eliminationtheorem}을(를) 적용하면, \eqref{howtotransformtj}에 따라 $\mathcal{X}$에 속한 학교를 반복적으로 소거하여 모수 $\bar t$로 갖춘 포트폴리오 함수 $w(\mathcal{Y})$를 얻을 수 있으며,  모든 $\mathcal{Y} \subseteq \mathcal{C} \setminus \mathcal{X}$에 대해 $w(\mathcal{Y}) = v(\mathcal{X} \cup \mathcal{Y}) + \text{const.}$가 성립한다. 따라서 \eqref{nemhauseriii}은(는) 다음 부등식과 동등하다.
\begin{align}
& w(\{j\}) - w(\varnothing) \geq w(\{j, k\}) - w(\{k\}) \\
\iff \qquad &w(\{j\})  +  w(\{k\})  \geq w(\{j, k\})  \\
\iff \qquad &\operatorname{E}[\,\bar t_j Z_j\,] + \operatorname{E}[\,\bar t_k Z_k\,] 
\geq \operatorname{E}\bigl[\max\{ \bar t_j Z_j, \bar t_k Z_k \} \bigr]
\end{align}
마지막 부등식은 분명히 참이다.
\end{proof}
\fi

\ifen \section{Homogeneous application costs}  \else \section{동일한 지원 비용} \fi \label{homogappcosts}
\ifen In this section, we focus on the special case in which each $g_j = 1$ and $H$ is a natural number $h \leq m$.   We show that an intuitive heuristic is in fact a $1/h$-approximation algorithm, then derive an exact polynomial-time solution algorithm. 
Applying Lemma \ref{assumetzerozero}, we assume that $t_0 = 0$ unless otherwise noted. Throughout this section, we will call the applicant Alma, and refer to the corresponding optimization problem as Alma's problem. 
\else
본 절에서 모든 $g_j = 1$이며 $H$가 자연수 $h \leq m$인 특수한 경우에 집중한다. 직관적인 휴리스틱 해법이 실제로 $1/h$-근사 해법임을 보인 다음에 정확한 다항 시간 해법을 도출한다. 
기본 정리 \ref{assumetzerozero}을(를) 적용하여 다른 언급이 없으면 $t_0 = 0$임을 가정하자. 본 절 내내 지원자를 `알마'라고 부르고 해당 최적화 문제를 `알마의 문제'라고 부른다.
\fi

\begin{problem}[\ifen Alma’s problem\else 알마의 문제\fi]
\ifen Alma's optimal college application portfolio is given by the solution to the following combinatorial optimization problem:
\else
알마의 최적 대학 지원 포트폴리오는 다음 조합 최적화 문제의 최적해이다.
\fi
\begin{align}
\begin{split}
\text{maximize}\quad &  v(\mathcal{X}) = \sum_{j\in
\mathcal{X}} \Bigl( f_j t_j \prod_{\substack{i \in \mathcal{X}: \\ i > j}} (1 - f_{i}) \Bigr)\\
\text{subject to}\quad & \mathcal{X}\subseteq\mathcal{C}, \quad|\mathcal{X}| \leq h 
\end{split}
\end{align}
\end{problem}

\ifen \subsection{Approximation properties of a na\"ive solution}  \else \subsection{나이브 해법의 근사 성질} \fi
\ifen
The expected utility associated with a single school $c_j$ is simply $\operatorname{E}[t_j Z_j] = f_j t_j$. It is therefore tempting to adopt the following strategy, which turns out to be suboptimal.
\else 
단일 학교 $c_j$에 해당하는 기대 효용이 단순히 $\operatorname{E}[t_j Z_j] = f_j t_j$이므로 다음과 같은 알고리즘이 매력적으로 보일 수 있지만 사실은 최적해가 아니다.
\fi
\begin{definition}[\ifen Na\"ive algorithm for Alma’s problem\else 알마의 문제를 위한 나이브 해법\fi] \label{naivealgorithm}
\ifen 
Apply to the $h$ schools having the highest expected utility $f_j t_j$.
\else
기대 효용 $f_j t_j$가 가장 큰 $h$개의 학교로 지원한다.
\fi
\end{definition}
\ifen 
\noindent This algorithm's computation time is $O(m)$ using the PICK algorithm of Blum et al. (1973).

The basic error of the na\"ive algorithm is that it maximizes $\operatorname{E}\left[\,\sum t_j Z_j\, \right]$ instead of $\operatorname{E}\left[\max \{t_j Z_j\} \right]$. The latter is what Alma is truly concerned with, since in the end she can attend only one school. The following example shows that the na\"ive algorithm can produce a suboptimal solution.
\else
\noindent Blum 외 (1973)의 PICK 알고리즘을 사용하면 위의 해법의 계산 시간이 $O(m)$이다.

나이브 알고리즘의 기본 실수는 $\operatorname{E}\left[\max \{t_j Z_j\} \right]$ 대신  $\operatorname{E}\left[\,\sum t_j Z_j\,\right]$을 최대화하는 것이다. 결국에는 단 한 학교로만 진학할 수 있으므로 알마의 목적은 후자가 아닌 전자다. 
다음 예는 나이브 해법이 최적해가 아닌 해를 출력할 수 있음을 보인다.
\fi
\begin{example}
\ifen 
Suppose $m=3$, $h=2$,  $ t= (70, 80, 90) $, and $f = (0.4, 0.4, 0.3)$. 
Then the na\"ive algorithm picks $\mathcal{T} = \{1, 2\}$ with 
$v(\mathcal{T}) = 70(0.4)(1-0.4) + 80(0.4) = 48.8$.
But $\mathcal{X} = \{2, 3\}$ with
$v(\mathcal{X}) = 80(0.4)(1-0.3) + 90(0.3) = 49.4$
is the optimal solution. 
\else
$m=3$, $h=2$,  $ t= (70, 80, 90) $, 그리고 $f = (0.4, 0.4, 0.3)$이라고 하자. 그러면 나이브 해법은 $\mathcal{T} = \{1, 2\}$를 선택하며 그의 기대 효용은 $v(\mathcal{T}) = 70(0.4)(1-0.4) + 80(0.4) = 48.8$이다. 그러나 $\mathcal{X} = \{2, 3\}$를 선택하면 기대 효용은 $v(\mathcal{X}) = 80(0.4)(1-0.3) + 90(0.3) = 49.4$이고 이는 최적해이다.
\fi
\end{example}
\ifen
\noindent In fact, the na\"ive algorithm is a $(1/h)$-approximation algorithm for Alma’s problem, as expressed in the following theorem.
\else
\noindent 다음 정리는 나이브 해법이 알마의 문제를 위한 $(1/h)$-근사 해법임을 의미한다.
\fi

\begin{theorem}[\ifen Accuracy of the na\"ive algorithm\else 나이브 해법의 정확성\fi] \label{oneoverhopt}
\ifen 
When the application limit is $h$, let $\mathcal{X}_h$ denote the optimal portfolio, and $\mathcal{T}_h$ the set of the $h$ schools having the largest values of $f_j t_j$. Then $v(\mathcal{T}_h) / v(\mathcal{X}_h) \geq 1/h$. 
\else
지원 제한이 $h$일 때, 최적 포트폴리오가 $\mathcal{X}_h$이고 기대 효용 $f_j t_j$가 가장 큰 $h$개의 학교의 집합이 $\mathcal{T}_h$라고 하자. 그러면 $v(\mathcal{T}_h) / v(\mathcal{X}_h) \geq 1/h$이다.
\fi
\end{theorem}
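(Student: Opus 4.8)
The plan is to sandwich the optimal value $v(\mathcal{X}_h)$ between two elementary bounds on the portfolio valuation function: an upper bound that replaces the ``maximax'' objective by a plain sum, and a lower bound on the naïve value coming from a single school. Throughout I assume $t_0 = 0$ by Lemma \ref{assumetzerozero}, so that the closed form \eqref{closedformportfoliovaluationX} applies directly.

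First I would record the upper bound. Since each factor $(1 - f_i) \in [0, 1)$ and the empty product equals one, every summand in \eqref{closedformportfoliovaluationX} satisfies $f_j t_j \prod_{i \in \mathcal{X}: i > j}(1 - f_i) \leq f_j t_j$, whence
\[
v(\mathcal{X}) \leq \sum_{j \in \mathcal{X}} f_j t_j
\]
for every portfolio $\mathcal{X}$. This is exactly the gap between $\operatorname{E}\bigl[\max\{t_j Z_j\}\bigr]$ and $\operatorname{E}\bigl[\sum t_j Z_j\bigr]$ remarked upon before the example.

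Next I would record a matching lower bound for the naïve portfolio. Because $v$ is evidently nondecreasing, taking any $j^\star \in \argmax_j f_j t_j$ gives $v(\mathcal{T}_h) \geq v(\{j^\star\}) = f_{j^\star} t_{j^\star} = \max_j f_j t_j$, where $j^\star \in \mathcal{T}_h$ since $\mathcal{T}_h$ collects the $h$ largest values of $f_j t_j$.

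Finally I would chain the inequalities. Because $\mathcal{T}_h$ holds the $h$ largest values of $f_j t_j$ while $|\mathcal{X}_h| \leq h$, the sum over $\mathcal{X}_h$ is dominated by the sum over $\mathcal{T}_h$, which is in turn at most $h$ copies of the maximum:
\[
v(\mathcal{X}_h) \leq \sum_{j \in \mathcal{X}_h} f_j t_j \leq \sum_{j \in \mathcal{T}_h} f_j t_j \leq h \max_j f_j t_j \leq h\, v(\mathcal{T}_h).
\]
Rearranging gives $v(\mathcal{T}_h)/v(\mathcal{X}_h) \geq 1/h$. There is no real obstacle here; the one point to state carefully is that the comparison $\sum_{j \in \mathcal{X}_h} f_j t_j \leq \sum_{j \in \mathcal{T}_h} f_j t_j$ uses only $|\mathcal{X}_h| \leq h$ together with the definition of $\mathcal{T}_h$ as the top-$h$ set, so the bound holds uniformly over all feasible $\mathcal{X}_h$ without any appeal to the optimality of $\mathcal{X}_h$.
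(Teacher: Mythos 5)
Your proof is correct and takes essentially the same route as the paper: both arguments sandwich the optimum through the chain $v(\mathcal{X}_h) \leq \sum_{j \in \mathcal{X}_h} f_j t_j \leq \sum_{j \in \mathcal{T}_h} f_j t_j \leq h\, v(\mathcal{T}_h)$. The only difference is the justification of the last link: the paper bounds $\operatorname{E}\bigl[\sum_{j \in \mathcal{T}_h} t_j Z_j\bigr] \leq h \operatorname{E}\bigl[\max_{j \in \mathcal{T}_h} t_j Z_j\bigr]$ by comparing the average to the maximum realization by realization, whereas you pass through the best singleton $\{j^\star\}$ and the monotonicity of $v$ --- an equally elementary one-line observation.
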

\begin{proof}
\ifen
Because $\mathcal{T}_h$ maximizes the quantity $\operatorname{E}\bigl[ \sum_{j \in \mathcal{T}_h}\{ t_j Z_j \}\bigr]$, we have
\else
$\mathcal{T}_h$는 $\operatorname{E}\bigl[ \sum_{j \in \mathcal{T}_h}\{ t_j Z_j \}\bigr]$를 최대화하므로,
\fi
\begin{align} \label{oneoverhopt}
\begin{split}
v(\mathcal{X}_h) &= \operatorname{E}\Bigl[ \max_{j \in \mathcal{X}_h}\{ t_j Z_j \}\Bigr] \leq \operatorname{E}\Bigl[ \sum_{j \in \mathcal{X}_h}\{ t_j Z_j \}\Bigr] \leq \operatorname{E}\Bigl[ \sum_{j \in \mathcal{T}_h}\{ t_j Z_j \}\Bigr] \\
&= h  \operatorname{E}\Bigl[ \tfrac{1}{h} \sum_{j \in \mathcal{T}_h}\{ t_j Z_j \}\Bigr]
\leq h  \operatorname{E}\Bigl[ \max_{j \in \mathcal{T}_h}\{ t_j Z_j \}\Bigr]
= h v(\mathcal{T}_h).
\end{split}
\end{align}
\ifen 
where the final inequality follows from the concavity of the $\max\{\}$ operator.
\else
단, 마지막 부등호는 $\max\{\}$ 연산의 볼록성에 따라 성립한다.
\fi
\end{proof}
\ifen
\noindent The following example establishes the tightness of the approximation factor. 
\else
\noindent 다음 예는 근사 계수가 타이트(tight)함을 보인다.
\fi

\begin{example} \label{tightexampleforoneoverhopt}
\ifen
Pick any $h$ and let $m = 2h$. For a small constant $\varepsilon \in (0, 1)$, define the market as follows.
\else
임의의 $h$를 택하고 $m = 2h$라고 하자. 작은 상수  $\varepsilon \in (0, 1)$에 대해 시장을 다음처럼 구성한다.
\fi
\begin{center}
\begin{tabular}{r|cccccccc}
$j$   & $1$      & $\cdots$ & $h$   &$h+1$         &  $h+2$ & $\cdots$ &      $m-1$  & $m$            \\ \hline
$f_j$ & $1$     &  $\cdots$ & $1$      & $\varepsilon^{1}$ & $\varepsilon^{2}$ & $\cdots$ & $\varepsilon^{h-1}$ & $\varepsilon^{h}$ \\
$t_j$ & $1$      &  $\cdots$ & $1$      & $\varepsilon^{-1}$ & $\varepsilon^{-2}$ & $\cdots$ & $\varepsilon^{-(h-1)}$ & $\varepsilon^{-h}$
\end{tabular}%
\end{center}
\ifen
Since all $f_j t_j = 1$, the na\"ive algorithm can choose $\mathcal{T}_h = \{1, \dots, h\}$, with $v(\mathcal{T}_h) = 1$. But the optimal solution is $\mathcal{X}_h = \{h+1, \dots, m\}$, with
\else
모든 $f_j t_j = 1$이므로 나이브 해법은 $\mathcal{T}_h = \{1, \dots, h\}$를 출력할 수 있으며 $v(\mathcal{T}_h) = 1$이다. 그러나 최적해는 $\mathcal{X}_h = \{h+1, \dots, m\}$이며 그의 기대 효용은 다음과 같다.
\fi
\begin{equation}
v(\mathcal{X}_h) = \sum_{j= h +1}^m \Bigl( f_j t_j \prod_{j' = j+1}^m (1 - f_{j'}) \Bigr) =  \sum_{j= 1}^h  (1 - \varepsilon)^{j} \approx h\ifen.\fi
\end{equation}
\ifen
Thus, as $\varepsilon$ approaches zero, we have $v(\mathcal{T}_h) / v(\mathcal{X}_h) \to 1/h$. (The optimality of $\mathcal{X}_h$ follows from the fact that it achieves the upper bound of Theorem \ref{oneoverhopt}.)
\else
따라서 $\varepsilon$이 0에 가까워지면서 $v(\mathcal{T}_h) / v(\mathcal{X}_h) \to 1/h$이 된다.  ($\mathcal{X}_h$의 최적성은 정리 \ref{oneoverhopt}이(가) 제시하는 상한을 실천하기 때문에 성립한다.)
\fi
\end{example}


\ifen
Although the na\"ive algorithm is suboptimal, we can still find the optimal solution in $O(hm)$-time, as we will now show.
\else
나이브 해법은 최적해가 아니지만 $O(hm)$-시간 안에 최적해를 구할 수 있다.
\fi

\ifen \subsection{The nestedness property}  \else \subsection{포함 사슬 관계} \fi
\ifen It turns out that the solution to Alma's problem possesses a special structure: An optimal portfolio of size $h+1$ includes an optimal portfolio of size $h$ as a subset.
\else 알마의 문제의 최적해에는 특별한 구조가 있다. 크기 $h+1$에 대한 최적 포트폴리오는 항상 크기 $h$에 대한 최적 포트폴리오를 부분집합으로 포함한다.\fi

%

\begin{theorem}[\ifen Nestedness of optimal application portfolios\else 최적 포트폴리오의 포함 사슬 관계\fi] \label{nestedapplication}
\ifen There exists a sequence of portfolios $\{\mathcal{X}_h\}_{h=1}^m$ satisfying the nestedness relation
\else  각 $\mathcal{X}_h$가 지원 제한 $h$에 대한 최적 포트폴리오며 위의 포함 사슬 관계를 만족하는 포트폴리오 수열  $\{\mathcal{X}_h\}_{h=1}^m$가 존재한다.\fi
\begin{equation}
\mathcal{X}_1 \subset \mathcal{X}_2\subset \dots \subset \mathcal{X}_m
\end{equation}
\ifen such that each $\mathcal{X}_h$ is an optimal application portfolio when the application limit is $h$.\fi
\end{theorem}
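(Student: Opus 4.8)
The plan is to build the nested chain by induction on the budget, peeling off one school at a time from the top. Since $v$ is nondecreasing, $\mathcal{X}_m = \mathcal{C}$ is optimal for $h = m$, and for every $h \le m$ the constraint $|\mathcal{X}| \le h$ admits an optimal portfolio of size \emph{exactly} $h$ (pad any smaller optimum without decreasing $v$). It therefore suffices to prove the following single-step lemma: for any optimal portfolio $\mathcal{B}$ of size $h+1$, there is an optimal portfolio $\mathcal{A}$ of size $h$ with $\mathcal{A} \subset \mathcal{B}$. Chaining this lemma downward from $\mathcal{C}$ produces the required sequence $\mathcal{X}_1 \subset \cdots \subset \mathcal{X}_m$.

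First I would record an explicit marginal-gain formula. Writing $P_{>j}(\mathcal{X}) = \prod_{i \in \mathcal{X}:\, i > j}(1-f_i)$ for the probability that no school preferred to $c_j$ admits Alma, and $B_{<j}(\mathcal{X}) = \operatorname{E}[\max\{t_i Z_i : i \in \mathcal{X},\, i < j\}]$ for the expected utility she draws from the schools below $c_j$, a direct expansion of \eqref{closedformportfoliovaluationX} gives, for $j \notin \mathcal{X}$,
\begin{equation}
v(\mathcal{X} \cup \{j\}) - v(\mathcal{X}) = f_j\, P_{>j}(\mathcal{X})\bigl(t_j - B_{<j}(\mathcal{X})\bigr).
\end{equation}
Because $B_{<j}(\mathcal{X}) \le t_j$, this reconfirms monotonicity, and, more importantly, it isolates how a school's contribution factors through the schools ranked strictly above it (via $P_{>j}$) and strictly below it (via $B_{<j}$). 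This clean separation is precisely the feature that the total order $t_0 < t_1 \le \cdots \le t_m$ supplies beyond mere submodularity, and the elimination identity of Lemma \ref{eliminationtheorem} could be invoked in the same spirit to reduce to committed subsets.

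To prove the single-step lemma I would argue by an extremal exchange. Fix an optimal $\mathcal{B}$ of size $h+1$ and, among all optimal portfolios of size $h$, choose $\mathcal{A}$ maximizing $|\mathcal{A} \cap \mathcal{B}|$. If $\mathcal{A} \not\subset \mathcal{B}$, then $\mathcal{A} \setminus \mathcal{B}$ and $\mathcal{B} \setminus \mathcal{A}$ are both nonempty, and writing $\mathcal{A} - a + b$ for $(\mathcal{A} \setminus \{a\}) \cup \{b\}$, I would seek a pair $a \in \mathcal{A} \setminus \mathcal{B}$, $b \in \mathcal{B} \setminus \mathcal{A}$ satisfying the exchange inequality
\begin{equation}
v(\mathcal{A} - a + b) + v(\mathcal{B} - b + a) \ge v(\mathcal{A}) + v(\mathcal{B}).
\end{equation}
Since $\mathcal{B}$ is optimal at size $h+1$, the second term on the left is at most $v(\mathcal{B})$, which forces $v(\mathcal{A} - a + b) \ge v(\mathcal{A})$; optimality of $\mathcal{A}$ then makes $\mathcal{A} - a + b$ another optimal size-$h$ portfolio that shares \emph{more} elements with $\mathcal{B}$ (it trades $a \notin \mathcal{B}$ for $b \in \mathcal{B}$), contradicting the maximality of $|\mathcal{A} \cap \mathcal{B}|$. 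Hence $\mathcal{A} \subset \mathcal{B}$, and the lemma follows.

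The hard part will be establishing this exchange inequality, and here the total order is indispensable. It is false for general submodular functions, and it even fails term-by-term for the individual probabilistic-coverage functions obtained by rewriting the objective in threshold form as $v(\mathcal{X}) = \sum_{r=0}^{m-1}(t_{r+1} - t_r)\bigl(1 - P_{>r}(\mathcal{X})\bigr)$: a single swap can raise one coverage term while lowering another, so one cannot conclude the inequality level by level. The inequality must instead be extracted from the \emph{joint} optimality of $\mathcal{A}$ and $\mathcal{B}$ together with the linear ordering of schools by utility. Concretely, I would list $\mathcal{A} \triangle \mathcal{B}$ in increasing index order and test swaps of index-adjacent pairs, expanding $v(\mathcal{A} - a + b) - v(\mathcal{A})$ and $v(\mathcal{B} - b + a) - v(\mathcal{B})$ through the shared factors $P_{>\cdot}$ and $B_{<\cdot}$ above, and then show that the hypothesis that \emph{every} admissible swap strictly decreases $v(\mathcal{A})$ contradicts the absence of an improving single-school substitution in the optimal $\mathcal{B}$. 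I expect reconciling these two diminishing-returns expansions — so that their sum is nonnegative for at least one adjacent pair — to be the technical heart of the argument; the monotone alignment of the $t_j$ with the position-dependent factors $P_{>j}$ and $B_{<j}$ is exactly what rules out the generic submodular counterexamples and upgrades the $(1-1/e)$ guarantee to exactness.
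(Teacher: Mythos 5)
Your scaffolding is sound: $\mathcal{C}$ is optimal at $h=m$, chaining a single-step lemma downward would produce the chain, the extremal-overlap choice of $\mathcal{A}$ plus your exchange inequality would prove that lemma, and your marginal-gain formula $v(\mathcal{X}\cup\{j\})-v(\mathcal{X}) = f_j\,P_{>j}(\mathcal{X})\bigl(t_j - B_{<j}(\mathcal{X})\bigr)$ is correct. The genuine gap is that the exchange inequality --- the step you yourself call the technical heart --- is never proved, and it is not a deferrable verification: it is the entire content of the theorem, and there is concrete evidence it cannot come from the kind of expansion you sketch. Take $f=(0.5,\,0.1,\,0.9)$, $t=(2,\,10,\,20)$, $\mathcal{A}=\{3\}$, $\mathcal{B}=\{1,2\}$. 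Then $v(\{3\})+v(\{1,2\}) = 18+1.9 = 19.9$, while both admissible swaps give $v(\{1\})+v(\{2,3\}) = v(\{2\})+v(\{1,3\}) = 1+18.1 = 19.1$: the inequality fails for \emph{every} pair $(a,b)$ even though $\mathcal{A}$ is the optimal singleton. (The same numbers show $v$ is not M$^\natural$-concave, so no generic discrete-convexity or gross-substitutes property can be invoked.) The inequality you need is therefore not a structural property of $v$ at all; if it holds, it holds only because $\mathcal{B}$ is \emph{also} optimal, and your plan of expanding adjacent swaps through $P_{>\cdot}$ and $B_{<\cdot}$ supplies no mechanism for importing $\mathcal{B}$'s optimality into the estimate. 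Note also that your single-step lemma (every optimal $(h{+}1)$-portfolio contains an optimal $h$-portfolio) is strictly stronger than the theorem, which asserts only that some nested chain exists, so you have raised the proof burden rather than lowered it.

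The paper avoids exchange arguments entirely, and its route shows where the missing leverage lives. It inducts on $h$ over all markets simultaneously: with $k$ the highest-utility school of $\mathcal{X}_{h+1}$, the conditioning identity $v(\mathcal{Y}\cup\{k\}) = (1-f_k)\,v(\mathcal{Y})+f_k t_k$ (valid because $t_k$ is maximal in the portfolio) shows that if $k\notin\mathcal{X}_h$ then $\mathcal{X}_h\cup\{k\}$ already attains the optimum at size $h+1$; and if $k\in\mathcal{X}_h$, Lemma \ref{eliminationtheorem} deletes $k$ and produces a genuine portfolio valuation function on $m-1$ schools in which $\mathcal{X}_h\setminus\{k\}$ and $\mathcal{X}_{h+1}\setminus\{k\}$ are optimal at sizes $h-1$ and $h$, so the induction hypothesis closes the argument. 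Your stronger lemma is in fact true and provable by the same two-case split: either some optimal $h$-portfolio contains $k$, in which case one recurses inside $\mathcal{B}\setminus\{k\}$ in the reduced market, or none does, in which case the identity above forces $\mathcal{B}\setminus\{k\}$ itself to be optimal whenever $f_k<1$ (and $f_k=1$ lands back in the first case). If you want to keep the top-down architecture, that is the way to complete it; as written, the extremal-exchange route leaves an unproven conjecture standing exactly where the proof should be.
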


\begin{proof}\ifen
By induction on $h$. Applying Lemma \ref{assumetzerozero}, we assume that $t_0 = 0$. 

(Base case.) First, we will show that $\mathcal{X}_1 \subset \mathcal{X}_2$. To get a contradiction, suppose that the optima are $\mathcal{X}_1 = \{j\}$ and $\mathcal{X}_2 = \{k, l\}$, where we may assume that $t_k \leq t_l$. Optimality requires that
\else $h$에 대해 귀납법을 적용한다. 기본 정리 \ref{assumetzerozero}에 따라 $t_0 = 0$이라고 가정하자. 

(기본 경우.) 우선 $\mathcal{X}_1 \subset \mathcal{X}_2$임을 증명하자. 모순을 보이기 위해 최적해가 $\mathcal{X}_1 = \{j\}$와 $\mathcal{X}_2 = \{k, l\}$이라고 하자. 여기서 $t_k \leq t_l$이라고 가정할 수 있다. 최적성에 따라 다음이 성립한다:
\fi
\begin{equation}v(\mathcal{X}_1 )  = f_j t_j > v(\{k\}) = f_k t_k\end{equation}
\ifen and \else 그리고\fi
\begin{align}
\begin{split}
v(\mathcal{X}_2) =  f_k (1- f_l) t_k + f_l t_l &> v(\{j, l\}) \\
& = f_j (1- f_l) t_j + (1- f_j) f_l t_l + f_j f_l \max\{t_j, t_l\} \\
&\geq  f_j (1- f_l) t_j + (1- f_j) f_l t_l + f_j f_l  t_l \\
&= f_j (1- f_l) t_j + f_l t_l  \\
&\geq f_k (1- f_l) t_k + f_l t_l  = v(\mathcal{X}_2)
\end{split}
\end{align}
\ifen which is a contradiction. 
\else 이는 모순이다.\fi

\ifen (Inductive step.) Assume that $\mathcal{X}_1 \subset \cdots \subset \mathcal{X}_h$, and we will show $\mathcal{X}_h \subset \mathcal{X}_{h+1}$. Let $k = \argmax\{ t_k: k \in \mathcal{X}_{h+1}\}$ and write $\mathcal{X}_{h+1} = \mathcal{Y}_{h} \cup \{k\}$.
\else (추론.) $\mathcal{X}_1 \subset \cdots \subset \mathcal{X}_h$라고 가정하고 $\mathcal{X}_h \subset \mathcal{X}_{h+1}$임을 보이자. $k = \argmax\{ t_k: k \in \mathcal{X}_{h+1}\}$으로 정의하여 $\mathcal{X}_{h+1} = \mathcal{Y}_{h} \cup \{k\}$으로 표현하자.\fi

\ifen Suppose $k \notin \mathcal{X}_h$. To get a contradiction, suppose that $v(\mathcal{Y}_h) < v(\mathcal{X}_h)$ and  $v(\mathcal{X}_{h+1}) > v(\mathcal{X}_h \cup \{k\})$. Then
\else $k \notin \mathcal{X}_h$인 경우를 고려하자. 모순을 보이기 위해 $v(\mathcal{Y}_h) < v(\mathcal{X}_h)$ 그리고 $v(\mathcal{X}_{h+1}) > v(\mathcal{X}_h \cup \{k\})$라고 가정하면 다음이 성립한다. \fi
\begin{align}
\begin{split}
v(\mathcal{X}_{h+1})&= v(\mathcal{Y}_{h} \cup \{k\}) \\
&= (1 - f_k) v(\mathcal{Y}_h) + f_k t_k \\
&\leq (1 - f_k) v(\mathcal{X}_h) + f_k \operatorname{E}\bigl[ \max\{t_k, X_h\}\bigr]\\
&=  v(\mathcal{X}_h\cup \{k\})
\end{split}
\end{align}
\ifen is a contradiction.
\else 모순이다.\fi

\ifen Now suppose that $k \in \mathcal{X}_h$. We can write $\mathcal{X}_h = \mathcal{Y}_{h-1} \cup \{k\}$, where $ \mathcal{Y}_{h-1}$ is some portfolio of size $h-1$. It suffices to show that $ \mathcal{Y}_{h-1} \subset \mathcal{Y}_h$. By definition, $\mathcal{Y}_{h-1}$ (respectively, $\mathcal{Y}_{h}$) maximizes the function $v(\mathcal{Y}\cup\{k\})$ over portfolios of size $h-1$ (respectively, $h$) that do not include $k$. That is, $\mathcal{Y}_{h-1}$ and $\mathcal{Y}_h$ are the optimal complements to the singleton portfolio $\{k\}$.

Applying Lemma \ref{eliminationtheorem}, we eliminate $c_k$, transform the remaining $t_j$-values to $\bar t_j$ according to \eqref{howtotransformtj}, and obtain a function $w(\mathcal{Y}) = v(\mathcal{Y} \cup \{k\}) - f_k t_k$ that grades portfolios $\mathcal{Y} \subseteq \mathcal{C} \setminus \{k\}$ according to how well they complement $\{k\}$. Since $w(\mathcal{Y})$ is itself a portfolio valuation funtion and $\bar t_0 = 0$, the inductive hypothesis implies that $\mathcal{Y}_{h-1} \subset \mathcal{Y}_h$, which completes the proof.

\else 이제 $k \in \mathcal{X}_h$인 경우를 고려하자. 그러면 크기 $h-1$인 어떤 포트폴리오 $\mathcal{Y}_{h-1}$에 대해 $\mathcal{X}_h = \mathcal{Y}_{h-1} \cup \{k\}$으로 표현할 수 있다. $\mathcal{Y}_{h-1} \subset \mathcal{Y}_h$임을 보이면 충분하다. 정의에 따라 $\mathcal{Y}_{h-1}$ ($\mathcal{Y}_{h}$)는 크기가 $h-1$ ($h$)인 포트폴리오 중에 함수 $v(\mathcal{Y}\cup\{k\})$를 최대화한다. 다시 말해 $\mathcal{Y}_{h-1}$과 $\mathcal{Y}_h$는 각각 단일 원소 포트폴리오 $\{k\}$와 최적으로 보완적인 학교 집합이다. 

기본 정리 \ref{eliminationtheorem}을(를) 적용하여 $c_k$를 소거하고 남은 $t_j$-값을 \eqref{howtotransformtj}에 따라 $\bar t_j$로 변환한다. 그러면 $\mathcal{Y} \subseteq \mathcal{C} \setminus \{k\}$인 포트폴리오와 $\{k\}$ 사이의 보완성을 평가하는 함수 $w(\mathcal{Y})$를 얻는다. $w(\mathcal{Y})$ 그 자체가 포트폴리오 가치 함수이며 $\bar t_0 = 0$이므로, 귀납법 가설에 따라 $\mathcal{Y}_{h-1} \subset \mathcal{Y}_h$이고 증명이 완성된다.
\fi
\end{proof}

\ifen \subsection{Polynomial-time solution} \else \subsection{다항 시간 해법}\fi
\ifen Applying the result above yields an efficient greedy algorithm for the optimal portfolio: Start with the empty set and add schools one at a time, maximizing $v(\mathcal{X}\cup \{k\})$ at each addition. Sorting $t$ is  $O(m \log m)$.  At each of the $h$ iterations, there are $O(m)$ candidates for $k$, and computing $v(\mathcal{X}\cup \{k\})$ is $O(h)$ using \eqref{closedformportfoliovaluationX}; therefore, the time complexity of this algorithm is $O(h^2 m + m \log m)$. 
\else 위의 결과를 적용하면 효율적인 최적 포트폴리오 탐욕 해법을 얻는다: 공집합으로 시작하고 $v(\mathcal{X}\cup \{k\})$를 최대화하는 학교를 차례대로 추가한다. $t$를 배열하는 시간을 $O(m \log m)$이다. 모든 $h$개의 반복 단계에서 $k$의 후보자의 개수는 $O(m)$이며 \eqref{closedformportfoliovaluationX}을(를) 사용하면 $v(\mathcal{X}\cup \{k\})$를 계산하는 시간은 $O(h)$이다. 따라서 이 해법의 계산 시간이 $O(h^2 m + m \log m)$임을 알 수 있다.\fi
%

\ifen 
We reduce the computation time to $O(hm)$ by taking advantage of the transformation from Lemma \ref{eliminationtheorem}. Once school $k$ is added to $\mathcal{X}$, we eliminate it from the set $\mathcal{C}\setminus \mathcal{X}$ of candidates, and update the $t_j$-values of the remaining schools according to \eqref{howtotransformtj}. Now, the \emph{next} school added must be the optimal singleton portfolio in the modified market. But the optimal singleton portfolio consists simply of the school with the highest value of $f_j \bar t_j$. Therefore, by updating the $t_j$-values at each iteration according to \eqref{howtotransformtj}, we eliminate the need to compute $v(\mathcal{X})$ entirely. Moreover, this algorithm does not require the schools to be indexed in ascending order by $t_j$, which removes the $O(m\log m)$ sorting cost.
\else
기본 정리 \ref{eliminationtheorem}에서 제시한 변환을 활용하면 계산 시간을 $O(hm)$으로 감소시킬 수 있다. $\mathcal{X}$에 $k$를 추가하면, 후보자 집합인 $\mathcal{C}\setminus \mathcal{X}$에서 $k$를 소거하고 남은 학교의 $t_j$-값을 \eqref{howtotransformtj}에 따라 수정한다. 그러면 다음으로 추가하는 학교는 수정된 시장의 최적 단일 원소 포트폴리오가 되어야 한다. 그런데 최적 단일 원소 포트폴리오는 단순히 $f_j \bar t_j$가 가장 큰 학교로 이루어진다. 따라서 각 반복 단계에서 \eqref{howtotransformtj}을(를) 이용하여 $t_j$-값을 수정하면 $v(\mathcal{X})$를 계산할 필요가 완전히 없어진다. 게다가 이 알고리즘에서 $t_j$를 배열할 필요가 없으므로 $O(m\log m)$인 배열 비용이 사라진다. \fi
\ifen
Algorithm \ref{algorithmforlargeh} outputs a list $\mathtt{X}$ of the $h$ schools to which Alma should apply. The schools appear in the order of entry such that when the algorithm is run with $h=m$, the optimal portfolio of size $h$ is given by $\mathcal{X}_h = \{\mathtt{X[1]}, \dots, \mathtt{X[h]}\}$. The entries of the list $\mathtt{V}$ give the valuation thereof. 
\else
알고리즘 \ref{algorithmforlargeh}은(는) 알마가 지원하는 $h$개의 학교를 목록 $\mathtt{X}$로 출력한다. 그의 원소들은 진입하는 순서대로 등장한다. 즉, $h=m$으로 알고리즘을 돌리면 크기가 $h$인 최적 포트폴리오는 $\mathcal{X}_h = \{\mathtt{X[1]}, \dots, \mathtt{X[h]}\}$와 같다. 또한 목록 $\mathtt{V}$의 원소들은 해당 포트폴리오의 가치다. \fi

\ifen {
\begin{algorithm}[h] 
\caption{Optimal portfolio algorithm for Alma’s problem.} \label{algorithmforlargeh}
\KwIn{Utility values $t \in(0, \infty)^m$, admissions probabilities $f \in (0, 1]^m$, application limit $h \leq m$.}
$\mathcal{C} \gets \{1 \dots m\}$\;
$\mathtt{X, V} \gets $ empty lists\;
\For{$i=1\dots h$}
{
    $k \gets \argmax_{j \in \mathcal{C}}\{f_j t_j\}$\;
    $\mathcal{C} \gets \mathcal{C} \setminus \{k\}$\;
    $\operatorname{append!}(\mathtt{X}, k)$\;
     \lIfElse{$i=1$}{$\operatorname{append!}(\mathtt{V}, f_k t_k)$}
     {$\operatorname{append!}(\mathtt{V}, \mathtt{V[i-1]} + f_k t_k)$}
    \For{$j \in \mathcal{C}$}
	{
	\lIfElse{$t_j \leq t_k$}{$t_j \gets (1 -  f_k) t_j $}{$t_j \gets  t_j -  f_k t_k$}
	}
}
\Return{$\mathtt{X, V}$}
\end{algorithm}
} \else {
\begin{algorithm}[H] 
\caption{알마의 문제를 위한 최적 포트폴리오 알고리즘.} \label{algorithmforlargeh}
\KwIn{효용 모수 $t \in(0, \infty)^m$, 합격 확률 $f \in (0, 1]^m$, 지원 제한 $h \leq m$.}
$\mathcal{C} \gets \{1 \dots m\}$\;
$\mathtt{X, V} \gets $ 빈 목록\;
\For{$i=1\dots h$}
{
    $k \gets \argmax_{j \in \mathcal{C}}\{f_j t_j\}$\;
    $\mathcal{C} \gets \mathcal{C} \setminus \{k\}$\;
    $\operatorname{append!}(\mathtt{X}, k)$\;
     \lIfElse{$i=1$}{$\operatorname{append!}(\mathtt{V}, f_k t_k)$}
     {$\operatorname{append!}(\mathtt{V}, \mathtt{V[i-1]} + f_k t_k)$}
    \For{$j \in \mathcal{C}$} 
	{
	\lIfElse{$t_j \leq t_k$}{$t_j \gets (1 -  f_k) t_j $}{$t_j \gets  t_j -  f_k t_k$}
	}
}
\Return{$\mathtt{X, V}$}
\end{algorithm}
} \fi

\begin{theorem}[\ifen Validity of Algorithm \ref{algorithmforlargeh}\else 알고리즘 \ref{algorithmforlargeh}의 타당성\fi] \label{validityofalmaalgorithm}
\ifen Algorithm \ref{algorithmforlargeh} produces an optimal application portfolio for Alma's problem in $O(h m)$-time.
\else 알고리즘 \ref{algorithmforlargeh}은(는) $O(h m)$-시간 안에 알마의 문제를 위한 최적 지원 포트폴리오를 출력한다.\fi
\end{theorem}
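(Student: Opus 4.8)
The plan is to prove two things separately: that the list $\mathtt{X}$ returned by Algorithm \ref{algorithmforlargeh} is an optimal portfolio (with $\mathtt{V}$ recording its valuation), and that the running time is $O(hm)$. I would begin with correctness. The first step is to verify that the algorithm is an efficient encoding of the greedy strategy that, starting from $\varnothing$, repeatedly appends the school maximizing the marginal valuation $v(\mathcal{X}\cup\{k\}) - v(\mathcal{X})$. After schools $k_1, \dots, k_{i-1}$ have been chosen, the in-place update $t_j \gets (1-f_k)t_j$ or $t_j \gets t_j - f_k t_k$ in the inner loop is precisely the transformation \eqref{howtotransformtj} of Lemma \ref{eliminationtheorem}, applied once per chosen school. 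By the closing remark of that lemma, iterating the transformation is equivalent to eliminating $c_{k_1}, \dots, c_{k_{i-1}}$ simultaneously, so the current $t$-values are the parameters $\bar t$ of a portfolio valuation function $w$ with $w(\mathcal{Y}) = v(\mathcal{Y}\cup\{k_1,\dots,k_{i-1}\}) - \text{const.}$ Since $\bar t_0 = 0$ is preserved and $w(\{j\}) = f_j \bar t_j$, the school $k_i = \argmax_{j} f_j t_j$ selected at iteration $i$ is exactly the one maximizing $v(\{k_1,\dots,k_{i-1}\}\cup\{k\})$, i.e. the greedy choice. Because each transformed market satisfies $w(\varnothing)=0$, a telescoping of the per-iteration increments $f_{k_i} t_{k_i}$ then gives $\mathtt{V}[i] = v(\{k_1,\dots,k_i\})$, so $\mathtt{V}$ correctly records the valuations.

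The second step is to show that this greedy construction is optimal at every size, which is where I would invoke Theorem \ref{nestedapplication}. I would argue by induction on $i$ that $\mathcal{X}_i := \{k_1, \dots, k_i\}$ is an optimal portfolio of size $i$. The base case is immediate, since $\{k_1\}$ maximizes $v(\{j\}) = f_j t_j$. For the inductive step, the elimination reduction above shows that $\mathcal{X}_i$ is the best size-$i$ portfolio containing $\mathcal{X}_{i-1}$; it remains to rule out the possibility that some size-$i$ portfolio disjoint from the greedy extension does strictly better. This is the content of nestedness: an optimal size-$i$ portfolio can be taken to contain an optimal size-$(i-1)$ portfolio. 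Combined with the inductive hypothesis that $\mathcal{X}_{i-1}$ is itself optimal, the best extension of $\mathcal{X}_{i-1}$ — which is what greedy computes — must attain the size-$i$ optimum.

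The main obstacle I anticipate is this last point: Theorem \ref{nestedapplication} only asserts the existence of \emph{some} nested chain of optima, whereas correctness of greedy requires that the particular optimum $\mathcal{X}_{i-1}$ produced by the algorithm extends to a size-$i$ optimum. When the maximizer of $f_j \bar t_j$ is unique this is automatic, but in the presence of ties one must verify that \emph{every} optimal $(i-1)$-portfolio extends, not merely one distinguished chain. I would close this gap by re-running the argument in the proof of Theorem \ref{nestedapplication} — taking the highest-utility school $k$ of a size-$i$ optimum, writing it as $\mathcal{Y}\cup\{k\}$, and applying Lemma \ref{eliminationtheorem} — with $\mathcal{X}_{i-1}$ in place of the chain's $(i-1)$th term, thereby establishing the stronger local-extension statement and completing the induction.

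Finally, the time bound is a routine count that I would dispatch last. Each of the $h$ iterations performs one $\argmax$ over the at-most-$m$ remaining candidates (an $O(m)$ linear scan), a constant-time append to $\mathtt{X}$ and $\mathtt{V}$, and one pass updating the remaining candidates' $t$-values, also $O(m)$; summing over the $h$ iterations gives $O(hm)$. Because the elimination transformation lets us select each school by a single $\argmax$ of $f_j t_j$ rather than by evaluating $v(\mathcal{X}\cup\{k\})$, and because the schools need never be sorted by $t_j$, we avoid both the per-candidate valuation cost and the $O(m\log m)$ sorting cost of the naive greedy, yielding the claimed $O(hm)$.
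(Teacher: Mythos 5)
Your proposal is correct and takes essentially the same route as the paper: the paper's own proof dispatches optimality in one line by appealing to the proof (not merely the statement) of Theorem \ref{nestedapplication}, and then gives the identical $O(hm)$ operation count (one $O(m)$ argmax and one $O(m)$ pass of updates via \eqref{howtotransformtj} per iteration, with no sorting). Your observation about ties---that bare existence of \emph{some} nested chain is insufficient and one must re-run the nestedness argument starting from the greedy portfolio itself to get the local-extension property---is exactly why the paper cites the \emph{proof} of Theorem \ref{nestedapplication} rather than the theorem; your write-up just makes explicit what the paper leaves implicit.
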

\begin{proof}
\ifen
Optimality follows from the proof of Theorem \ref{nestedapplication}. Suppose $\mathcal{C}$ is stored as a list. Then at each of the $h$ iterations of the main loop, finding the top school costs $O(m)$, and the $t_j$-values of the remaining $O(m)$ schools are each updated in unit time. Therefore, the overall time complexity is $O(h m)$.
\else
최적성은 정리 \ref{nestedapplication}의 증명에 따라 성립한다. $\mathcal{C}$를 목록 구현으로 저장한다고 하자. 그러면 각 $h$개의 반복 단계에서 최적 추가 학교를 구하는 시간은 $O(m)$이며 나머지 $O(m)$개 학교의 $t_j$-값을 각 단위 시간으로 수정할 수 있다. 따라서 전체 시간 복잡도가 $O(h m)$이다.
\fi
\end{proof}

\ifen 
It is possible to store $\mathcal{C}$ as a binary max heap rather than a list. The heap is ordered according to the criterion $i \geq j \iff f_i t_i \geq f_j t_j$, and by draining the heap and reheapifying at the end of each iteration, the computation time remains $O(hm)$. However, in our numerical experiments, whose results are reported in Section \ref{numericalexperiments},
we found the list implementation to be much faster, because it is possible to identify the entering school $k$ as the utility parameters are updated, which all but eliminates the cost of the $\argmax\{\}$ operation. 
\else
$\mathcal{C}$는 목록 대신 이진 최대 힙 구현으로 저장하는 방법도 고려한다. 단, 힙의 순서는 $i \geq j \iff f_i t_i \geq f_j t_j$의 조건으로 정의하며, 각 반복 단계의 마지막 단계에서 힙을 비우고 다시 힙화(heapify)하면 $O(hm)$ 계산 시간을 유지할 수 있다. 그러나 (\ref{numericalexperiments}절에서 결과를 제시할) 수리 실험에서 목록 구현으로 구성한 알고리즘이 더 빨랐다. 효용 모수를 수정하면서 $k$를 구함으로 $\argmax\{\}$ 연산의 계산 비용을 거의 제거할 수 있기 때문이다.
\fi

\ifen \subsection{Diminishing returns to application} \else\subsection{지원의 수확 체감} \fi
\ifen The nestedness property implies that Alma's expected utility is a discretely concave function of $h$.
\else 포함 사슬 관계 성질은 알마의 기대 효용이 $h$의 이산 오목 함수임을 의미한다.\fi

\begin{theorem}[\ifen Optimal portfolio valuation concave in $h$\else 최적 포트폴리오 가치의 $h$-오목성\fi] \label{concavityinh}
\ifen For $h = 2 \dots (m-1)$,
\else $h = 2 \dots (m-1)$에 대해,\fi
\begin{equation}v(\mathcal{X}_h) - v(\mathcal{X}_{h-1}) \geq v(\mathcal{X}_{h+1}) - v(\mathcal{X}_{h}).\end{equation} 
\end{theorem}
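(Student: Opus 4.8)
The plan is to exploit the nestedness established in Theorem \ref{nestedapplication} to rewrite each consecutive difference as the marginal gain of a single school, and then to combine the submodularity of $v$ with the optimality of the intermediate portfolio. Concretely, fix $h$ with $2 \leq h \leq m-1$, so that the nested optima $\mathcal{X}_{h-1} \subset \mathcal{X}_h \subset \mathcal{X}_{h+1}$ all exist. Since consecutive members of the chain differ by exactly one element, I would write $\mathcal{X}_h = \mathcal{X}_{h-1}\cup\{a\}$ and $\mathcal{X}_{h+1} = \mathcal{X}_h \cup \{b\}$, where $a,b$ are distinct schools and $a,b\notin\mathcal{X}_{h-1}$. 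The two differences in the claim then read as the marginal valuations $v(\mathcal{X}_h)-v(\mathcal{X}_{h-1})$, the gain from adding $a$ to $\mathcal{X}_{h-1}$, and $v(\mathcal{X}_{h+1})-v(\mathcal{X}_h)$, the gain from adding $b$ to $\mathcal{X}_h$.

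First I would apply submodularity. Invoking the marginal-gain inequality \eqref{nemhauseriii} with $\mathcal{X}=\mathcal{X}_{h-1}$, $j=b$, and $k=a$, and using $\mathcal{X}_{h-1}\cup\{a,b\}=\mathcal{X}_{h+1}$ together with $\mathcal{X}_{h-1}\cup\{a\}=\mathcal{X}_h$, gives
\begin{equation}
v(\mathcal{X}_{h-1}\cup\{b\}) - v(\mathcal{X}_{h-1}) \geq v(\mathcal{X}_{h+1}) - v(\mathcal{X}_h).
\end{equation}
This bounds the later marginal gain by the gain obtained from adding the \emph{same} school $b$ directly to the smaller portfolio $\mathcal{X}_{h-1}$.

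Second, I would use optimality to relate the left-hand side to the earlier difference. The set $\mathcal{X}_{h-1}\cup\{b\}$ has cardinality $h$ and is therefore a feasible competitor to the optimal size-$h$ portfolio $\mathcal{X}_h$, so $v(\mathcal{X}_h)\geq v(\mathcal{X}_{h-1}\cup\{b\})$. Subtracting $v(\mathcal{X}_{h-1})$ from both sides yields $v(\mathcal{X}_h)-v(\mathcal{X}_{h-1}) \geq v(\mathcal{X}_{h-1}\cup\{b\})-v(\mathcal{X}_{h-1})$, and chaining this with the submodularity inequality above produces exactly $v(\mathcal{X}_h)-v(\mathcal{X}_{h-1}) \geq v(\mathcal{X}_{h+1})-v(\mathcal{X}_h)$, which is the claim.

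The step I expect to require the most care is recognizing that submodularity \emph{alone} is insufficient: inequality \eqref{nemhauseriii} compares the marginal value of a fixed school $b$ across two portfolios, whereas the statement concerns the marginal values of the \emph{different} schools $a$ and $b$ that the optimal solutions actually adjoin. The optimality of $\mathcal{X}_h$ among size-$h$ portfolios is precisely what bridges this gap, and the decisive check is that $\mathcal{X}_{h-1}\cup\{b\}$ is genuinely a distinct, feasible size-$h$ candidate, i.e. that $b\notin\mathcal{X}_{h-1}$, which is guaranteed by the nestedness relation.
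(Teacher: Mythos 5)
Your proposal is correct and is essentially identical to the paper's own proof: both use the nestedness of Theorem \ref{nestedapplication} to write $\mathcal{X}_h = \mathcal{X}_{h-1}\cup\{a\}$, $\mathcal{X}_{h+1} = \mathcal{X}_{h-1}\cup\{a,b\}$, then bound $v(\mathcal{X}_h)-v(\mathcal{X}_{h-1})$ below by the marginal gain of $b$ at $\mathcal{X}_{h-1}$ via optimality of $\mathcal{X}_h$, and finally apply the submodular inequality \eqref{nemhauseriii} to push that marginal gain down to $v(\mathcal{X}_{h+1})-v(\mathcal{X}_h)$. The only difference is the order in which the two inequalities are presented, which is immaterial.
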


\begin{proof}
\ifen Applying Theorem \ref{nestedapplication}, we write $\mathcal{X}_h = \mathcal{X}_{h-1} \cup\{j\}$ and $\mathcal{X}_{h+1} = \mathcal{X}_{h-1} \cup\{j, k\}$. By optimality, $v(\mathcal{X}_h) - v(\mathcal{X}_{h-1}) \geq v(\mathcal{X}_{h-1}\cup\{k\}) - v(\mathcal{X}_{h-1})$. By submodularity and nestedness, $v(\mathcal{X}_{h-1}\cup\{k\}) - v(\mathcal{X}_{h-1}) \geq  v(\mathcal{X}_{h}\cup\{k\}) - v(\mathcal{X}_{h}) = v(\mathcal{X}_{h+1}) - v(\mathcal{X}_{h})$.
\else 정리 \ref{nestedapplication}을(를) 적용하면 $ \mathcal{X}_h = \mathcal{X}_{h-1} \cup\{j\}$ 그리고 $\mathcal{X}_{h+1} = \mathcal{X}_{h-1} \cup\{j, k\}$으로 표현할 수 있다. 최적성에 따라, $v(\mathcal{X}_h) - v(\mathcal{X}_{h-1}) \geq v(\mathcal{X}_{h-1}\cup\{k\}) - v(\mathcal{X}_{h-1})$. Submodularity 그리고 포함 사슬 관계 성질에 따라, $v(\mathcal{X}_{h-1}\cup\{k\}) - v(\mathcal{X}_{h-1}) \geq  v(\mathcal{X}_{h}\cup\{k\}) - v(\mathcal{X}_{h}) = v(\mathcal{X}_{h+1}) - v(\mathcal{X}_{h})$. \fi
\end{proof}
\ifen
\noindent (An elementary proof is given in \S\,\ref{elementaryconcavityproof}.) It follows that when $\mathcal{X}_h$ is the optimal $h$-portfolio for a given market, $v(\mathcal{X}_h)$ is $O(h)$. In other words, Alma's problem exhibits diminishing returns to the application budget. Example \ref{tightexampleforoneoverhopt}, in which $v(\mathcal{X}_h)$ can be made arbitrarily close to $h$, establishes the tightness of this bound.
\else
\noindent (기초적인 증명은 \S\,\ref{elementaryconcavityproof}에서 제시한다.) 위 결과는 $\mathcal{X}_h$가 어떤 시장의 최적 $h$-포트폴리오일 때, $v(\mathcal{X}_h)$가 $O(h)$ 함수라고 의미한다. 다시 말해 알마 문제의 지원 예산에는 수확 체감 성질이 있다. 예 \ref{tightexampleforoneoverhopt}에서 $v(\mathcal{X}_h)$를 $h$에 임의로 가깝게 조정할 수 있으므로 이 상한이 타이트함을 알 수 있다.
\fi


\ifen \subsection{A small example} \else \subsection{작은 예} \fi \label{planetsexamplesection}
\ifen 
Let us examine a fictional admissions market consisting of $m=8$ schools. The school data, along with the optimal solutions for each $h \leq m$, appear in Table \ref{planetsdata}.
\else 
$m = 8$개의 학교로 구성된 가상 입학 시장을 고려하자. 학교 자료와 각 $h \leq m$에 대응하는 최적해는 표 \ref{planetsdata}에서 나타난다.
\fi

\ifen
Below are shown the first several iterations of Algorithm \ref{algorithmforlargeh}. The values of $f_j$, $t_j$, and their product are recorded only for the schools remaining in $\mathcal{C}$. $f * t$, where $(f * t)_j = f_j t_j$, denotes the entrywise product of $f$ and $t$. The school added at each iteration, underlined, is the one whose $f_j t_j$-value is greatest.
\else
아래에서 알고리즘 \ref{algorithmforlargeh}의 몇 반복 단계가 나타난다. $f_j$, $t_j$, 그리고 그 곱의 값은 $\mathcal{C}$에 남아 있는 학교에 해당하는 값만 기록한다. $(f * t)_j = f_j t_j$로 정의된 $f * t$는 $f$와 $t$의 원소 당 곱을 의미한다. 각 반복 단계에서 최적해에 추가하는 학교는 $f_j t_j$-값이 가장 큰 학교이며 이를 밑줄로 강조한다.
\fi
\newcommand{\Iteration}{\ifen Iteration~\else 반복 단계~\fi}
\begin{align*}
\text{\Iteration 1:}
&&C &= \{1, 2, 3, 4, 5, 6, 7, 8\} \\
&&f &= \{0.39, 0.33, 0.24, 0.24, 0.05, 0.03, 0.1, 0.12\} \\
&&t &= \{200, 250, 300, 350, 400, 450, 500, 550\} \\
&&f * t &= \{78.0, 82.5, 72.0, \underline{84.0}, 20.0, 13.5, 50.0, 66.0\} 
~~\tag*{\(\implies \mathcal{X}_3 = \{4\} \)}\\
\text{\Iteration 2:}
&&C &= \{1, 2, 3, 5, 6, 7, 8\} \\
&&f &= \{0.39, 0.33, 0.24, 0.05, 0.03, 0.1, 0.12\} \\
&&t &= \{152, 190, 228, 316, 366, 416, 466\} \\
&&f * t &= \{59.28, \underline{62.7}, 54.72, 15.8, 10.98, 41.6, 55.92\} 
~~\tag*{\(\implies \mathcal{X}_3 = \{4, 2\} \)}\\
\text{\Iteration 3:}
&&C &= \{1, 3, 5, 6, 7, 8\} \\
&&f &= \{0.39, 0.24, 0.05, 0.03, 0.1, 0.12\} \\
&&t &= \{101.84, 165.3, 253.3, 303.3, 353.3, 403.3\} \\
&&f * t &= \{39.718, 39.672, 12.665, 9.099, 35.33, \underline{48.396}\} 
~~\tag*{\(\implies \mathcal{X}_3 = \{4, 2, 8\} \)}\\
&&&\cdots\\
\text{\Iteration 8:}
&&C &= \{6\}, f = \{0.03\}, t = \{177.622\}, f * t= \{\underline{5.329}\}
~~\tag*{\(\implies \mathcal{X}_3 = \{4, 2, 8, 1, 7, 3, 5, 6\} \)}
\end{align*}
\ifen 
The output of the algorithm is $\mathtt{X} = [4, 2, 8, 1, 7, 3, 5, 6]$, and the optimal $h$-portfolio consists of its first $h$ entries. The ``priority'' column of Table \ref{planetsdata} shows the inverse permutation of $\mathtt{X}$, which is the minimum value of $h$ for which the school is included in the optimal portfolio. Figure \ref{planetsplot} shows the value of the optimal portfolio as a function of $h$. The concave shape of the plot suggests the result of Theorem \ref{concavityinh}. 
\else
알고리즘의 출력은 $\mathtt{X} = [4, 2, 8, 1, 7, 3, 5, 6]$이며 최적 $h$-포트폴리오는 그의 첫 $h$개의 원소로 이루어진다. 표 \ref{planetsdata}에서 등장하는 ``지원 순위'' 자료는 $\mathtt{X}$의 역 순열(inverse permutation)이며 이는 해당 학교가 최적 포트폴리오에 포함되는 최소한 $h$-값을 의미한다. 그림 \ref{planetsplot}은 최적 포트폴리오의 가치를 $h$의 함수로 나타낸다. 곡선의 오목성은 정리 \ref{concavityinh}의 결과를 시사한다.
\fi

\begin{table}[h!] \centering
\small
\begin{tabular}{r|lcccc}
\ifen\textbf{Index $j$} & \textbf{School $c_j$} & \textbf{Admit prob. $f_j$} & \textbf{Utility $t_j$} & \textbf{Priority} & \textbf{$v(\mathcal{X}_h)$} \\ \hline
\else 
\textbf{지표 $j$} & \textbf{학교 $c_j$} & \textbf{합격 확률 $f_j$} & \textbf{효용 $t_j$}  & \textbf{지원 순위} & \textbf{$v(\mathcal{X}_h)$} \\ \hline  \fi
\\[-.75em]
1 & \ifen Mercury University   \else  수성대  \fi   & 0.39   & 200 & 4   & 230.0   \\
2 & \ifen Venus University     \else  금성대  \fi   & 0.33   & 250 & 2   & 146.7  \\
3 & \ifen Mars University      \else  화성대  \fi   & 0.24   & 300 & 6   & 281.5  \\
4 & \ifen Jupiter University   \else  목성대  \fi   & 0.24   & 350 & 1   & \phantom{0}84.0  \\
5 & \ifen Saturn University    \else  토성대  \fi   & 0.05   & 400 & 7   & 288.8  \\
6 & \ifen Uranus University    \else  천왕성대 \fi   & 0.03   & 450 & 8   & 294.1  \\
7 & \ifen Neptune University   \else  해왕성대 \fi   & 0.10   & 500 & 5   & 257.7  \\
8 & \ifen Pluto College        \else  명왕성대 \fi   & 0.12   & 550 & 3   & 195.1      
\end{tabular}
\caption{\label{planetsdata} \normalsize
\ifen College data and optimal application portfolios for a fictional market with $m=8$ schools. By the nestedness property (Theorem \ref{nestedapplication}), the optimal portfolio when the application limit is $h$ consists of the $h$ schools having priority $h$ or less.
\else
$m=8$개의 학교로 이루어진 가상 입학 시장의 대학교 자료와 최적 지원 포트폴리오. 포함 사슬 관계 성질(정리 \ref{nestedapplication})에 따라, 지원 제한이 $h$일 때 최적 포트폴리오는 지원 순위가 $h$ 이하인 $h$개의 학교로 구성된다.
\fi}
\end{table}

\begin{figure}[h!] 
 \centering
 \includegraphics[width=0.95\textwidth]{./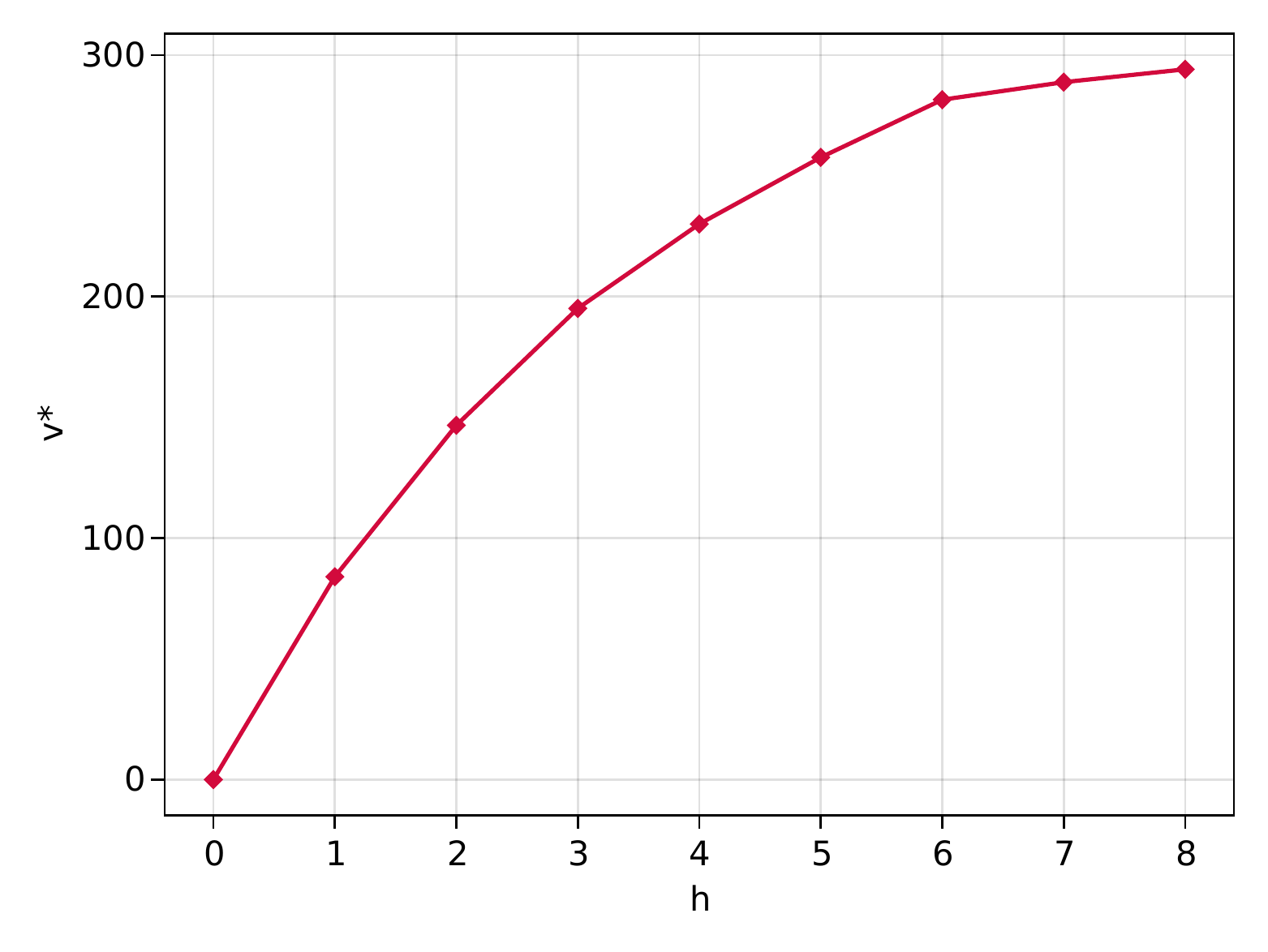}
  \caption{  \label{planetsplot}
  \ifen Application limit $h$ versus the optimal portfolio valuation $v^* = v(\mathcal{X}_h)$ in a fictional market with $m=8$ schools. The concave shape suggests the result of Theorem \ref{concavityinh}. 
  \else $m=8$개의 학교로 구성된 가상 입학 시상에서, 각 지원 제한 $h$에 해당하는 최적 포트폴리오의 가치 $v^* = v(\mathcal{X}_h)$. 곡선의 오목성은 정리 \ref{concavityinh}의 결과를 시사한다.\fi}
\end{figure}

\ifen \section{Heterogeneous application costs} \else\section{동일하지 않은 지원 비용} \fi\label{hetappcosts}
\ifen
Now we turn to the more general problem in which the constant $g_j$ represents the \emph{cost} of applying to $c_j$ and the student, whom we now call Ellis, has a \emph{budget} of $H$ to spend on college applications.  Applying Lemma \ref{assumetzerozero}, we assume $t_0 = 0$ throughout.
\else
이제 $g_j$가 $c_j$에 지원하는 비용을 나타내며 학생이 지원에 쓸 수 있는 예산이 $H$인 일반적인 문제를 고려한다. 이 문제를 풀고자 하는 학생을 엘리스라고 부르자. 본 절 내내 기본 정리 \ref{assumetzerozero}을(를) 적용하여 $t_0 = 0$임을 가정하자.
\fi

\begin{problem}[\ifen Ellis's problem\else 엘리스의 문제\fi]
\ifen
Ellis's optimal college application portfolio is given by the solution to the following combinatorial optimization problem.
\else 
엘리스의 최적 대학 지원 포트폴리오는 다음 조합 최적화 문제의 최적해이다.
\fi
\begin{align}
\begin{split}
\text{maximize}\quad &  v(\mathcal{X}) = \sum_{j\in
\mathcal{X}} \Bigl(f_j t_j \prod_{\substack{i \in \mathcal{X}: \\ i > j}} (1 - f_{i}) \Bigr)\\
\text{subject to}\quad & \mathcal{X}\subseteq\mathcal{C},~~\sum_{j \in \mathcal{X}} g_j \leq H 
\end{split}
\end{align}
\end{problem}

\ifen
\noindent In this section, we show that this problem is NP-complete, then provide four algorithmic solutions: an exact branch-and-bound routine, an exact dynamic program, a fully polynomial-time approximation scheme (FPTAS), and a simulated-annealing heuristic. 
\else
\noindent 본 절에서 이 문제가 NP-complete함을 보이고 3가지 해법을 제시한다: 정확한 분지한계법 해법, 정확한 동적 계획 해법, 완전 다항 시간 근사 해법(fully polynomial-time approximation scheme, FPTAS), 그리고 모의 담금질(simulated annealing) 휴리스틱.
\fi

\ifen \subsection{NP-completeness} \else \subsection{NP-completeness}\fi
\ifen
The optima for Ellis's problem are not necessarily nested, nor is the number of schools in the optimal portfolio necessarily increasing in $H$. For example, if
$f = (0.5, 0.5, 0.5)$, $t = (1, 1, 219)$, and $g = (1, 1, 3)$,
then it is evident that the optimal portfolio for $H = 2$ is $\{1, 2\}$ while that for $H = 3$ is $\{3\}$. In fact, Ellis’s problem is NP-complete, as we will show by a transformation from the binary knapsack problem, which is known to be NP-complete (Garey and Johnson 1979, \S\,3.2.1).
\else
엘리스 문제의 최적해는 필수적으로 포함 사슬 관계를 가지지 않으며 최적 포트폴리오에 속한 학교의 개수는 $H$에 대해 감소할 수도 있다. 예를 들어
$f = (0.5, 0.5, 0.5)$, $t = (1, 1, 219)$, 그리고 $g = (1, 1, 3)$이면 $H = 2$에 대응하는 최적해가 $\{1, 2\}$이며 $H = 3$에 대응하는 최적해는 $\{3\}$임을 쉽게 알 수 있다. 사실은 엘리스의 문제가 NP-complete하며 이를 이진 배낭 문제에서의 변환을 통해 증명할 수 있다. 배낭 문제의 NP-completeness가 이미 알려져 있다 (Garey and Johnson 1979, \S\,3.2.1).
\fi

\begin{problem}[\ifen Decision form of knapsack problem\else 배낭 문제의 결정 형태\fi]
\ifen 
An \emph{instance} consists of a set $\mathcal{B}$ of $m$ objects, utility values $u_j \in \mathbb{N}$ and weight $w_j \in \mathbb{N}$ for each $j \in \mathcal{B}$, and target utility $U\in \mathbb{N}$ and knapsack capacity $W\in \mathbb{N}$. The instance is called a \emph{yes-instance} if and only if there exists a set $\mathcal{B’} \subseteq \mathcal{B}$ having $\sum_{j \in \mathcal{B’}} u_j \geq U$ and  $\sum_{j \in \mathcal{B’}} w_j \leq W$.
\else
`인스턴스'는 $m$개 상품으로 이루어진 집합 $\mathcal{B}$, 각 $j \in \mathcal{B}$에 대한 효용값 $u_j \in \mathbb{N}$와 무게 $w_j \in \mathbb{N}$, 목적 효용 $U\in \mathbb{N}$, 그리고 배낭 용량 $W\in \mathbb{N}$로 구성된다. 인스턴스가 `예-인스턴스'가 될 필수충분 조건은 $\sum_{j \in \mathcal{B’}} u_j \geq U$와  $\sum_{j \in \mathcal{B’}} w_j \leq W$를 만족하는 집합 $\mathcal{B’} \subseteq \mathcal{B}$가 존재하는 것이다.
\fi
\end{problem}

\begin{problem}[\ifen Decision form of Ellis's problem\else 엘리스 문제의 결정 형태\fi] \label{ellisdecisionform}
\ifen
An \emph{instance} consists of an instance of Ellis’s problem and a target valuation $V$. The instance is called a \emph{yes-instance} if and only if there exists a portfolio $\mathcal{X} \subseteq \mathcal{C}$ having $v(\mathcal{X}) \geq V$ and  $\sum_{j \in \mathcal{X}} g_j \leq H$.
\else
`인스턴스'는 엘리스 문제의 인스턴스와 목적 포트폴리오 가치 $V$로 구성된다. 인스턴스가 `예-인스턴스'가 될 필수충분 조건은 $v(\mathcal{X}) \geq V$와 $\sum_{j \in \mathcal{X}} g_j \leq H$를 만족하는 포트폴리오 $\mathcal{X} \subseteq \mathcal{C}$가 존재하는 것이다.
\fi
\end{problem}

\begin{theorem}
\ifen
The decision form of Ellis’s problem is NP-complete.
\else
엘리스 문제의 결정 형태는 NP-complete하다.
\fi
\end{theorem}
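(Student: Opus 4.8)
The plan is to prove the two halves of NP-completeness separately: membership in NP, which is routine, and NP-hardness, which I would establish by a polynomial reduction from the binary knapsack problem. For membership, note that a portfolio $\mathcal{X}$ serves as a certificate: using the closed form \eqref{closedformportfoliovaluationX} we can evaluate $v(\mathcal{X})$ and $\sum_{j \in \mathcal{X}} g_j$ with $O(m^2)$ arithmetic operations and check whether $v(\mathcal{X}) \geq V$ and $\sum_{j \in \mathcal{X}} g_j \leq H$. Hence the decision form of Ellis's problem (Problem \ref{ellisdecisionform}) lies in NP.

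For hardness, I would exploit the remark from the introduction that the valuation function can be made to approximate a linear function of the characteristic vector. Given a knapsack instance with objects $\mathcal{B}$, values $u_j$, weights $w_j$, target $U$, and capacity $W$, I would build an Ellis instance on the same ground set by setting $g_j = w_j$ and $H = W$, so the budget constraints coincide exactly. To linearize the objective I would choose a common, tiny admission probability $f_j = \phi$ with $\phi = 1/(2mU)$ and set $t_j = u_j / \phi$, so that $f_j t_j = u_j$ for every $j$; after relabeling so that $t$ is nondecreasing (the valuation is invariant under relabeling of the schools), this is a legitimate instance. Finally I would set the target valuation to $V = U - \tfrac{1}{2}$.

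The crux is to show that this reduction preserves yes-instances, which reduces to sandwiching the valuation. Writing $S = \sum_{j \in \mathcal{X}} u_j$ and $P = \prod_{i \in \mathcal{C}} (1 - f_i)$, the closed form gives $v(\mathcal{X}) = \sum_{j \in \mathcal{X}} u_j \prod_{i \in \mathcal{X},\, i > j}(1 - f_i)$, and since each partial product lies between $P$ and $1$ we obtain $S P \leq v(\mathcal{X}) \leq S$. Bernoulli's inequality yields $P = (1 - \phi)^m \geq 1 - m\phi = 1 - 1/(2U)$, hence $U P \geq U - \tfrac{1}{2} = V$. Now if the knapsack instance is a yes-instance, witnessed by some $\mathcal{B}'$, then taking $\mathcal{X} = \mathcal{B}'$ gives $\sum_{j \in \mathcal{X}} g_j \leq H$ and $v(\mathcal{X}) \geq S P \geq U P \geq V$. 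Conversely, since $S$ is an integer, any budget-feasible $\mathcal{X}$ drawn from a knapsack set that fails the utility target has $S \leq U - 1$, so $v(\mathcal{X}) \leq S \leq U - 1 < V$; thus a yes-instance of Ellis forces a knapsack set with $S \geq U$. This establishes the equivalence.

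The main obstacle I anticipate is precisely the control of the nonlinear cross terms: the product factors $\prod_{i > j}(1 - f_i)$ make $v(\mathcal{X})$ strictly smaller than $\sum_{j \in \mathcal{X}} u_j$, and the reduction only works if this gap is too small to move $v(\mathcal{X})$ across the threshold separating the integers $U - 1$ and $U$. Choosing $\phi$ on the order of $1/(mU)$ makes $P$ close enough to $1$ to guarantee this, and I would verify that all constructed parameters---namely $\phi$, the values $t_j = 2mU u_j$, and $V$---have bit-length polynomial in the size of the knapsack input, so that the reduction itself runs in polynomial time.
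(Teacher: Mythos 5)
Your proposal is correct and follows essentially the same route as the paper's proof: a polynomial reduction from binary knapsack that keeps the weights and budget, assigns every school the same tiny admission probability, and sets $t_j = u_j/f_j$ so that $v(\mathcal{X})$ is sandwiched within less than one unit of $\sum_{j\in\mathcal{X}} u_j$, making the threshold tests equivalent. The only differences are cosmetic: the paper takes $f_j = \delta = 1/(m U_{\mathrm{max}})$ with target $V = U-1$ (exploiting integrality of the knapsack utilities), whereas you take $\phi = 1/(2mU)$ with $V = U - \tfrac{1}{2}$.
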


\begin{proof}
\ifen It is obvious that the problem is in NP.
\else 문제가 NP에 속하는 것은 자명하다.\fi

\ifen
Consider an instance of the knapsack problem, and we will construct an instance of Problem \ref{ellisdecisionform} that is a yes-instance if and only if the corresponding knapsack instance is a yes-instance. Without loss of generality, we may assume that the objects in $\mathcal{B}$ are indexed in increasing order of $u_j$, that each $u_j >0$, and that each $w_j \leq W$. 
\else
임의의 배낭 문제 인스턴스를 택하고, 예-인스턴스 여부가 동치인 문제 \ref{ellisdecisionform}의 인스턴스를 만들자. 일반성을 잃지 않고 $\mathcal{B}$에 속한 상품이 $u_j$가 증가하는 순서대로 배열되며 모든 $u_j >0$이고 $w_j \leq W$임을 가정할 수 있다.
\fi

\ifen
Let $U_{\mathrm{max}} = \sum_{j \in \mathcal{B}} u_j$ and $\delta = {1} /{m U_{\mathrm{max}}} > 0$, and construct an instance of Ellis’s problem with $\mathcal{C} = \mathcal{B}$, $H = W$, all $f_j = \delta$, and each $t_j = u_j / \delta$. Clearly, $\mathcal{X} \subseteq \mathcal{C}$ is feasible for Ellis’s problem if and only if it is feasible for the knapsack instance. Now, we observe that for any nonempty $\mathcal{X}$,
\else
$U_{\mathrm{max}} = \sum_{j \in \mathcal{B}} u_j$와 $\delta = {1} /{m U_{\mathrm{max}}} > 0$일 때, $\mathcal{C} = \mathcal{B}$, $H = W$,  모든 $f_j = \delta$, 그리고 각 $t_j = u_j / \delta$인 엘리스 문제의 인스턴스를 고려하자. $\mathcal{X} \subseteq \mathcal{C}$가 엘리스 문제의 가능해가 되는 것은 배낭 문제 인스턴스의 가능해가 되는 것과 동치임을 쉽게 알 수 있다. 그러면 공집합이 아닌 임의의 $\mathcal{X}$ 에 대해,
\fi
\begin{align}
\begin{split}
\sum_{j \in \mathcal{X}} u_j &=  \sum_{j \in \mathcal{X}} f_j t_j
> \sum_{j \in \mathcal{X}} \Bigl( f_j t_j \prod_{\substack{j’ \in \mathcal{X}: \\ j' > j}} (1 - f_{j’}) \Bigr)
= v(\mathcal{X}) \\
&= \sum_{j \in \mathcal{X}} \Bigl( u_j  \prod_{\substack{j’ \in \mathcal{X}: \\ j' > j}} (1 - \delta) \Bigr)
\geq (1 - \delta)^m \sum_{j \in \mathcal{X}} u_j \\
&\geq (1 - m\delta) \sum_{j \in \mathcal{X}} u_j 
\geq \sum_{j \in \mathcal{X}} u_j  - m\delta U_{\mathrm{max}}
= \sum_{j \in \mathcal{X}} u_j  - 1.
\end{split}
\end{align}
\ifen
This means that the utility of an application portfolio $\mathcal{X}$ in the corresponding knapsack instance is the smallest integer greater than $v( \mathcal{X})$. That is, $\sum_{j \in \mathcal{X}} u_j \geq U$ if and only if $v(\mathcal{X}) \geq U -1$. Taking $V = U-1$ completes the transformation and concludes the proof. 
\else
이는 지원 포트폴리오 $\mathcal{X}$가 해당 배낭 문제에서 일으키는 효용이 $v( \mathcal{X})$보다 크면서 가장 작은 정수임을 의미한다. 즉, $\sum_{j \in \mathcal{X}} u_j \geq U$가 성립할 필수충분 조건은 $v(\mathcal{X}) \geq U -1$이다. $V = U-1$으로 정의하여 변환을 완성하면 증명이 완료된다.
\fi
\end{proof}

\ifen 
An intuitive extension of the greedy algorithm for Alma's problem is to iteratively add to $\mathcal{X}$ the school $k$ for which $\bigl( v(\mathcal{X}\cup\{k\}) - v(\mathcal{X}) \bigr) / g_k$ is largest. But the construction above shows that the objective function of Ellis's problem can approximate that of a knapsack problem with arbitrary precision. Therefore, in pathological examples such as the following, the greedy algorithm can achieve an arbitrarily small approximation ratio. 
\else
알마 문제를 위한 탐욕 해법의 직관적인 확장은 $\bigl( v(\mathcal{X}\cup\{k\}) - v(\mathcal{X}) \bigr) / g_k$-값이 가장 큰 학교 $k$를  $\mathcal{X}$에 차례대로 추가하는 것이다. 그러나 위에서 도출한 변환은 엘리스 문제의 목적함수를 배낭 문제의 목적함수의 원하는 만큼 정확한 근사로 만들 수 있음을 의미한다. 따라서 다음 같은 병례에서 탐욕 해법은 임의로 낮은 근사 비율을 일으킬 수 있다.
\fi
\begin{example} \label{greedyzeroellis}
\ifen
Let $t = (10, 2021)$, $f = (0.1, 0.1)$, $g = (1, 500)$, and $H = 500$. Then the greedy approximation algorithm produces the clearly suboptimal solution $\mathcal{X} = \{1\}$. 
\else
 $t = (10, 2021)$, $f = (0.1, 0.1)$, $g = (1, 500)$, 그리고 $H = 500$이라고 하자. 그러면 탐욕 해법은 $\mathcal{X} = \{1\}$을 출력하며 이는 분명히 최적해가 아니다.
\fi
\end{example} 
\ifen
We might expect to obtain a better approximate solution to Ellis's problem by solving the following knapsack problem (nevermind that it is NP-complete) as a surrogate.
\begin{equation}
\text{maximize}\quad \sum_{j \in \mathcal{X}} f_j t_j \qquad \text{subject to}\quad\mathcal{X}\subseteq\mathcal{C},~~\sum_{j \in \mathcal{X}} g_j \leq H
\end{equation}
However, upon inspection, this solution merely generalizes the na\"ive algorithm for Alma's problem (Definition \ref{naivealgorithm}). In Ellis's problem, because the number of schools in the optimal portfolio can be as large as $m-1$, the knapsack solution's approximation coefficient is $1/(m-1)$ by analogy with \eqref{oneoverhopt}. A tight example is as follows. 
\else
엘리스 문제의 좀 더 좋은 근사해를 찾기 위해 다음 배낭 문제를 (NP-complete함을 무시하고) 대리 문제로 푸는 것에 매력은 있다.
\begin{equation}
\text{maximize}\quad \sum_{j \in \mathcal{X}} f_j t_j \qquad \text{subject to}\quad\mathcal{X}\subseteq\mathcal{C},~~\sum_{j \in \mathcal{X}} g_j \leq H
\end{equation}
그러나 이러한 해법은 사실 알마 문제의 나이브 해법(정의 \ref{naivealgorithm})의 단순한 일반화임을 알 수 있다. 엘리스 문제에서 최적 포트폴리오에 크기가 $m-1$이 될 수 있으므로 배낭 문제 해법의 근사 계수는 \eqref{oneoverhopt}과(와) 유사하듯이 $1 / (m-1)$이다. 타이트한 예는 다음과 같다.
\fi
\begin{example} \label{knapsackheuristicellis}
\ifen Consider the following instance of Ellis's problem, where $H = m-1$.
\else 엘리스 문제의 다음 인스턴스를 고려하자. 단, $H = m-1$이다. \fi
\begin{center}
\begin{tabular}{r|cccc}
$j$   & $1$            & $\cdots$ & $m-1$          & $m$            \\ \hline
$f_j$ & $1$            &  $\cdots$ & $1$            & $\frac{1}{m-1}$ \\
$t_j$ & $\frac{1}{m-1}$ &  $\cdots$ & $\frac{1}{m-1}$ & $m-1$          \\
$g_j$ & $1$            &  $\cdots$ & $1$            & $m-1$         
\end{tabular}%
\end{center}
\ifen
The feasible solutions $\mathcal{Y} = \{1, \dots, m-1\}$ and $\mathcal{X} = \{ m\}$ each have $\sum_{j \in \mathcal{Y}} f_j t_j = \sum_{j \in \mathcal{X}} f_j t_j =  1$, and thus the knapsack algorithm can choose $\mathcal{Y}$ with $v(\mathcal{Y}) = 1/(m-1)$. But the optimal solution is  $\mathcal{X}$ with $v(\mathcal{X}) =  1$. 
\else
가능해 $\mathcal{Y} = \{1, \dots, m-1\}$와 $\mathcal{X} = \{ m\}$에 대해 $\sum_{j \in \mathcal{Y}} f_j t_j = \sum_{j \in \mathcal{X}} f_j t_j =  1$이므로 배낭 문제 해법은  $\mathcal{Y}$를 선택할 수 있다. 그의 가치는 $v(\mathcal{Y}) = 1/(m-1)$이다. 그러나 $\mathcal{X}$는 최적해이며 그의 가치는 $v(\mathcal{X}) =  1$이다.
\fi
\end{example}

\ifen 
In summary, neither a greedy algorithm nor a knapsack-based algorithm solves Ellis's problem to optimality. 
\else
탐욕 해법 또는 배낭 문제 기반 해법으로 엘리스 문제의 정확한 최적해를 구할 수 없음을 의미한다.
\fi

\ifen \subsection{Branch-and-bound algorithm} \else \subsection{분지한계법} \fi \label{subsectionbranchbound}
\ifen 
A traditional approach to integer optimization problems is the branch-and-bound framework, which generates subproblems in which the values of one or more decision variables are fixed and uses an upper bound on the objective function to exclude, or \emph{fathom,} branches of the decision tree that cannot yield a solution better than the best solution on hand (Martello and Toth 1990; Kellerer et al. 2004). In this subsection, we present an integer formulation of Ellis's problem and a linear program (LP) that bounds the objective value from above. We tighten the LP bound for specific subproblems by reusing the conditional transformation of the $t_j$-values from Algorithm \ref{algorithmforlargeh}. A branch-and-bound routine emerges naturally from these ingredients.
\else
정수 최적화의 고전적인 기법 중 분지한계법(branch and bound)이 있다. 이는 하나 이상의 결정 변수를 고정하고 해당 부문제를 탐색하는 해법이며, 목적함수의 상한(bound)을 활용하여 기존 해보다 좋은 해를 생성할 수 없는 결정 나무의 가지(branch)를 제외하여 결정 공간을 줄인다 (Martello and Toth 1990; Kellerer 외 2004). 본 항에서, 엘리스 문제의 정수 모형과 그의 상한이 되는 선형 계획을 제시한다. 알고리즘 \ref{algorithmforlargeh}에서 개발한 $t_j$-값의 변환을 활용하여 부문제의 선형 완화 문제에 의한 상한을 더 타이트하게 만든다. 이 기반으로 분지한계법 알고리즘을 도출한다.
\fi

\ifen
To begin, let us characterize the portfolio $\mathcal{X}$ as the binary vector $x \in \{0, 1\}^m$, where $x_j = 1 $ if and only if $j \in \mathcal\{X\}$. Then it is not difficult to see that Ellis's problem is equivalent to the following integer nonlinear program. 
\else
우선 포트폴리오 $\mathcal{X}$를 이진 특성 벡터 $x \in \{0, 1\}^m$로 표현하자. 단, $x_j = 1 $인 것은 $j \in \mathcal\{X\}$과 동치다. 그러면 엘리스의 문제가 다음 비선형 정수 최적화 문제와 동등함을 볼 수 있다.
\fi
\begin{problem}[\ifen Integer NLP for Ellis's problem\else 엘리스의 문제를 위한 비선형 정수 계획\fi] \label{integernlp}
\begin{align}
\begin{split}
\text{maximize}\quad &  v(x) = \sum_{j=1}^m \Bigl(f_j t_j  x_j \prod_{i > j} (1 - f_{i} x_i) \Bigr)\\
\text{subject to}\quad & \sum_{j=1}^m g_j x_j \leq H, ~~ x_i \in \{0, 1\}^m
\end{split}
\end{align}
\end{problem}
\ifen
\noindent Since the product in $v(x)$ does not exceed one, the following LP relaxation is an upper bound on the valuation of the optimal portfolio.
\else
\noindent $v(x)$에서 등장하는 곱은 1을 넘을 수 없으므로 다음 선형 완화 문제는 최적 포트폴리오 가치의 상한이 된다.
\fi
\begin{problem}[\ifen LP relaxation for Ellis's problem\else 엘리스의 문제를 위한 선형 완화 문제\fi] \label{LPrelaxation}
\begin{align}
\begin{split}
\text{maximize}\quad &  v_{\mathrm{LP}}(x) = \sum_{j=1}^m  f_j t_j x_j \\
\text{subject to}\quad & \sum_{j=1}^m g_j x_j \leq H, ~~ x \in [0, 1]^m
\end{split}
\end{align}
\end{problem}
\ifen
\noindent Problem \ref{LPrelaxation} is a continuous knapsack problem, which is easily solved in $O(m \log m)$-time by a greedy algorithm (Dantzig 1957). Balas and Zemel (1980, \S\,2) provide an $O(m)$ algorithm. 
\else
\noindent 문제 \ref{LPrelaxation}은(는) ``연속 배낭 문제''라고 불리며 탐욕 알고리즘을 적용하면 $O(m \log m)$-시간 안에 쉽게 풀 수 있다 (Dantzig 1957).  $O(m)$ 알고리즘도 알려져 있다 (Balas와 Zemel 1980, \S\,2).
\fi

\ifen In our branch-and-bound framework, a \emph{node} is characterized by a three-way partition of schools $\mathcal{C}= \mathcal{I} \cup \mathcal{O} \cup \mathcal{N}$ satisfying $\sum_{j \in \mathcal{I}} g_j \leq H$. $\mathcal{I}$ consists of schools that are ``in'' the application portfolio ($x_j = 1$), $\mathcal{O}$ consists of those that are ``out'' ($x_j = 0$), and $\mathcal{N}$ consists of those that are ``negotiable.'' 
The choice of partition induces a pair of subproblems. The first subproblem is an instance of Problem \ref{integernlp}, namely
\else
본 연구의 분지한계법 설계에서, `마디'(node)는 학교를 세 군으로 나눈 분할 $\mathcal{C}= \mathcal{I} \cup \mathcal{O} \cup \mathcal{N}$로 묘사한다. 단, $\mathcal{I}$는 $\sum_{j \in \mathcal{I}} g_j \leq H$을 만족한다. $\mathcal{I}$는 지원 포트폴리오에 속한 학교(`in': $x_j = 1$), $\mathcal{O}$는 속하지 않은 학교(`out': $x_j = 0$), 그리고 $\mathcal{N}$은 미결의 학교(`negotiable')로 이루어진다. 분할이 결정되면 부문제의 한 쌍이 결정해진다. 첫 번째 부문제는 다음 같은 문제 \ref{integernlp}의 인스턴스다.
\fi
\begin{align} \label{nodenlpsubproblem}
\begin{split}
\text{maximize}\quad &  v(x) = \gamma + \sum_{j \in \mathcal{N}} \Bigl(f_j \bar t_j x_j \prod_{\substack{i \in \mathcal{N}:\\i > j}} (1 - f_{i} x_i) \Bigr)\\
\text{subject to}\quad & \sum_{j \in \mathcal{N}} g_j x_j \leq \bar H; ~~ x_j \in \{0, 1\},~~j \in \mathcal{N}.
\end{split}
\end{align}
\ifen The second is the corresponding instance of Problem \ref{LPrelaxation}:
\else 두 번째 부문제는 대응하는 문제 \ref{LPrelaxation} 인스턴스다.\fi
\begin{align} \label{nodelprelaxation}
\begin{split}
\text{maximize}\quad &  w_{\mathrm{LP}}(x)= \gamma + \sum_{j \in \mathcal{N}}  f_j \bar t_j  x_j \\
\text{subject to}\quad & \sum_{j \in \mathcal{N}} g_j x_j \leq \bar H; ~~ x_j \in [0, 1],~~j \in \mathcal{N}
\end{split}
\end{align}
\ifen
In both subproblems, $\bar H = H - \sum_{j\in \mathcal{I}} g_j$ denotes the residual budget. The parameters $\gamma$ and $\bar t$ are obtained by iteratively applying the transformation \eqref{howtotransformtj} to the schools in $\mathcal{I}$. For each $j\in \mathcal{I}$,  we increment $\gamma$ by the current value of $f_j \bar t_j$, eliminate $j$ from the market, and update the remaining $\bar t_j$-values using \eqref{howtotransformtj}. (An example is given below.)
\else
각 부문제에서 $\bar H = H - \sum_{j\in \mathcal{I}} g_j$는 잔여 예산을 의미한다. $\mathcal{I}$에 속한 학교에 \eqref{howtotransformtj}의 변환을 적용하여 모수 $\gamma$와 $\bar t$를 얻을 수 있다. 즉, 각 $j\in \mathcal{I}$에 대해 $\gamma$를 $f_j \bar t_j$ 만큼 증가시키고, 시장에서 $j$를 소거하고, \eqref{howtotransformtj}에 따라 남은 $\bar t_j$-값을 수정한다. (아래에서 예를 제시한다.)
\fi

\ifen 
Given a node $n  = (\mathcal{I}, \mathcal{O}, \mathcal{N})$, its children are generated as follows. Every node has two, one, or zero children. In the typical case, we select a school $j \in \mathcal{N}$ for which $g_j \leq \bar H$ and generate one child by moving $j$ to $\mathcal{I}$, and another child by moving $j$ it to $\mathcal{O}$. Equivalently, we set $x_j = 1$ in one child and $x_j = 0$ in the other. In principle, any school can be chosen for $j$, but as a greedy heuristic, we choose the school for which the ratio $f_j \bar t_j / g_j$ is highest. Notice that this method of generating children ensures that each node's $\mathcal{I}$-set differs from its parent's by at most a single school, so the constant $\gamma$ and transformed $\bar t_j$-values for the new node can be obtained by a single application of \eqref{howtotransformtj}. 
\else
마디  $n  = (\mathcal{I}, \mathcal{O}, \mathcal{N})$이 주어질 때, 그의 새끼 마디를 다음처럼 생성한다. 모든 마디에는 2, 1, 혹은 0개의 새끼가 있다. 전형적인 경우에서, $g_j \leq \bar H$를 만족하는 학교 $j \in \mathcal{N}$ 선택한다. $j$를 $\mathcal{I}$로 이동시켜서 한 새끼를 얻고, $j$를 $\mathcal{O}$로 이동시켜서 다른 새끼를 얻는다. 즉, 한 새끼 마디에서 $x_j = 1$로 고정하고 다른 새끼 마디에서 $x_j = 0$으로 고정한다. 일반적으로 $j$는 임의의 학교가 될 수 있지만 탐욕적인 휴리스틱으로 $f_j \bar t_j / g_j$의 비율이 가장 큰 학교를 선택한다. 이런 식으로 새끼를 생성하면 각 마디의 $\mathcal{I}$-집합은 자기 어미 마디의 $\mathcal{I}$-집합과 최대 한 학교로만 다르다. 따라서 새로운 마디를 생성할 때, \eqref{howtotransformtj}을(를) 한 번만 적용하면 상수 $\gamma$와 변화된 $\bar t_j$-값을 얻을 수 있다.
\fi

\ifen
There are two atypical cases. First, if every school in $\mathcal{N}$ has $g_j > \bar H$, then there is no school that can be added to $\mathcal{I}$ in a feasible portfolio, and the optimal portfolio on this branch is $\mathcal{I}$ itself. In this case, we generate only one child by moving all the schools from $\mathcal{N}$ to $\mathcal{O}$. Second, if $\mathcal{N} = \O$, then the node has zero children, and as no further branching is possible, the node is called a \emph{leaf.}
\else
이례적인 경우 2가지 있다. 첫 번째는, $\mathcal{N}$에 속한 모든 학교에 대해 $g_j > \bar H$이면 가능성을 유지하며 $\mathcal{I}$에 추가할 수 있는 학교가 없으므로 본 가지의 최적 포트폴리오는 단지 $\mathcal{I}$ 자체다. 이때 $\mathcal{N}$에 원소를 모두 $\mathcal{O}$로 이동시켜서 단일 새끼를 생성한다. 두 번째는, $\mathcal{N} = \O$인 마디는 새끼가 없다. 더 이상 분지할 수 없으므로 이 마디를 `잎 마디'(leaf)라고 한다.
\fi

\begin{example}
\ifen
Consider a market in which $t = (20, 40, 60, 80, 100)$, $f = (0.5, 0.5, 0.5, 0.5, 0.5)$, $g = (3, 2, 3, 2, 3)$, and $H = 8$, and the node $n =  (\mathcal{I}, \mathcal{O}, \mathcal{N}) = (\{2, 5\}, \{1\}, \{3, 4\})$. Let us compute the two subproblems associated with $n$ and identify its children. To compute the subproblems, we first simply disregard $c_1$. Next, to eliminate $c_2$, we apply \eqref{howtotransformtj} to $t$ to obtain
$\{\bar{\bar t}_3, \bar{\bar t}_4, \bar{\bar t}_5\} = \{ (1 - f_2) t_3, (1 - f_2) t_4, (1 -f_2) t_5\} = \{30, 40, 50 \}$ and $\bar{\bar \gamma} = f_2 t_2 = 20$. We eliminate $c_5$ by again applying \eqref{howtotransformtj} to $\bar{ \bar t}$ to obtain $\{\bar t_3, \bar t_4\} =  \{\bar{\bar t}_3 - f_5 \bar{\bar t}_5, \bar{\bar t}_4-  f_5 \bar{\bar t}_5\}= \{5, 15\}$ and $\gamma = \bar{\bar \gamma} + f_5 \bar{\bar t}_5 = 35$. Finally, $\bar H = H - g_2 - g_5 = 3$. Now problems \eqref{nodenlpsubproblem} and \eqref{nodelprelaxation} are easily obtained by substitution.
\else
 $t = (20, 40, 60, 80, 100)$, $f = (0.5, 0.5, 0.5, 0.5, 0.5)$, $g = (3, 2, 3, 2, 3)$, 그리고 $H = 8$인 시장과 마디 $n =  (\mathcal{I}, \mathcal{O}, \mathcal{N}) = (\{2, 5\}, \{1\}, \{3, 4\})$이 주어질 때 $n$에 대응하는 부문제와 새끼 마디를 구하자. 부문제를 계산하기 위해 먼저 $c_1$을 단순히 무시한다.  $c_2$를 소거하기 위해 $t$에 \eqref{howtotransformtj}에 적용하여 
$\{\bar{\bar t}_3, \bar{\bar t}_4, \bar{\bar t}_5\} = \{ (1 - f_2) t_3, (1 - f_2) t_4, (1 -f_2) t_5\} = \{30, 40, 50 \}$과 $\bar{\bar \gamma} = f_2 t_2 = 20$을 얻는다. $c_5$를 소거하기 위해 $\bar{\bar t}$에 \eqref{howtotransformtj}에 다시 적용하여 $\{\bar t_3, \bar t_4\} =  \{\bar{\bar t}_3 - f_5 \bar{\bar t}_5, \bar{\bar t}_4-  f_5 \bar{\bar t}_5\}= \{5, 15\}$와 $\gamma = \bar{\bar \gamma} + f_5 \bar{\bar t}_5 = 35$를 얻는다. 마지막으로, $\bar H = H - g_2 - g_5 = 3$이다. 이제 \eqref{nodenlpsubproblem}과(와) \eqref{nodelprelaxation}인 부문제를 쉽게 구할 수 있다.
\fi

\ifen
Since at least one of the schools in $\mathcal{N}$ has $g_j \leq \bar H$, $n$ has two children. Applying the node-generation rule, $c_4$ has the highest $f_j \bar t_j / g_j$-ratio, so the children are $n_1 = (\{2, 4, 5\}, \{1\}, \{3\})$ and $n_2 = (\{2, 5\}, \{1, 4\}, \{3\})$.
\else
$\mathcal{N}$에 속한 학교 중에 $g_j \leq \bar H$인 학교가 있으므로 $n$은 새끼 마디 2개 있다. $f_j \bar t_j / g_j$-비율이 가장 큰 학교는 $c_4$이므로 새끼 마디를 생성하는 법칙을 적용하면 $n_1 = (\{2, 4, 5\}, \{1\}, \{3\})$과 $n_2 = (\{2, 5\}, \{1, 4\}, \{3\})$ 같은 새끼를 얻는다.
\fi
\end{example}

\ifen
To implement the branch-and-bound algorithm, we represent the set of candidate nodes---namely, nonleaf nodes whose children have not yet been generated---by $\mathfrak{T}$. Each time a node $n= (\mathcal{I}, \mathcal{O}, \mathcal{N})$ is generated, we record the values $v_{\mathcal{I}}[n] = v(\mathcal{I})$ and $v^*_{\mathrm{LP}}[n]$, the optimal objective value of the LP relaxation \eqref{nodelprelaxation}. Because $\mathcal{I}$ is a feasible portfolio, $v_{\mathcal{I}}[n]$ is a lower bound on the optimal objective value. Moreover, by the argument given in the proof of Theorem \ref{nestedapplication}, the objective function of  \eqref{nodenlpsubproblem} is identical to the function $v(\mathcal{I} \cup \mathcal{X})$. This means that $v_{\mathrm{LP}}^*[n]$  is an upper bound on the valuation of any portfolio that contains $\mathcal{I}$ as a subset and does not include any school in $\mathcal{O}$, and hence on the valuation of any portfolio on this branch. Therefore, if upon generating a new node $n_2$, we discover that its objective value $v_{\mathcal{I}}[n_2]$ is greater than $v_{\mathrm{LP}}^*[n_1]$ for some other node $n_1$, then we can disregard $n_1$ and all its descendants. 
\else
분지한계법을 실행하기 위해 후보 마디, 즉 잎이 아니면서 새끼를 아직 생성하지 않은 마디의 집합을  $\mathfrak{T}$라고 부르자. 마디 $n= (\mathcal{I}, \mathcal{O}, \mathcal{N})$를 생성할 때마다, $v_{\mathcal{I}}[n] = v(\mathcal{I})$과 선형 완화 문제 \eqref{nodelprelaxation}의 최댓값 $v^*_{\mathrm{LP}}[n]$를 기록한다. $\mathcal{I}$는 예산에 가능한 포트폴리오이므로 $v_{\mathcal{I}}[n]$는 원래 목적함수 최댓값의 하한이 된다. 또한 정리 \ref{nestedapplication}의 증명 과정에서 보였듯, \eqref{nodenlpsubproblem}의 목적함수는 함수 $v(\mathcal{I} \cup \mathcal{X})$과 동치다. 이는 $v_{\mathrm{LP}}^*[n]$는 $\mathcal{I}$를 부분집합으로 포함하며 $\mathcal{O}$에 속한 학교를 포함하지 않은, 즉 이 가지에 있는 모든 포트폴리오의 가치에 대한 상한임을 의미한다. 따라서 새로운 마디 $n_2$를 생성했을 때, 만약 그의 목적 함숫값 $v_{\mathcal{I}}[n_2]$가 어떤 다른 마디 $n_1$에 대응하는 $v_{\mathrm{LP}}^*[n_1]$보다 크다면, $n_1$과 그의 모든 후손을 무시할 수 있다.
\fi

\ifen
The algorithm is initialized by populating $\mathfrak{T}$ with the root node $n_0 = (\O, \O, \mathcal{C})$. At each iteration, it selects the node $n \in \mathfrak{T}$ having the highest $v_{\mathrm{LP}}^*[n]$-value, generates its children, and removes $n$ from the candidate set. Next, the children $n'$ of $n$ are inspected and added to the tree. If one of the children yields a new optimal solution, then we mark it as the best candidate and fathom any nodes $n''$ for which  $v_{\mathcal{I}}[n'] >  v_{\mathrm{LP}}^*[n'']$. When no nodes remain in the candidate set, the algorithm has explored every branch, so it returns the best candidate and terminates.
\else
$\mathfrak{T}$에 뿌리 마디 $n_0 = (\O, \O, \mathcal{C})$를 넣어서 알고리즘을 초기화한다. 각 반복 단계에서 $v_{\mathrm{LP}}^*[n]$-값이 가장 큰 마디 $n \in \mathfrak{T}$을 선택한다. $n$의 새끼를 생성하고 후보 집합에서 제거한다. 그다음에 $n$의 각 새끼 마디 $n'$을 검증하고 나무에 추가한다. 새끼 마디 중에 새로운 최적해를 발견하면 이를 가장 좋은 후보로 표하고  $v_{\mathcal{I}}[n'] >  v_{\mathrm{LP}}^*[n'']$인 마디 $n''$을 후보 집합에서 제거한다. 후보 집합이 공집합이 되면 알고리즘이 모든 가지를 탐색했으므로 가장 좋은 후보를 출력하고 종료한다.
\fi

\ifen {
\begin{algorithm}[h] 
\caption{Branch and bound for Ellis's problem.} \label{ellisbnb}
\KwIn{Utility values $t \in(0, \infty)^m$, admissions probabilities $f \in (0, 1]^m$, application costs $g \in (0, \infty)^m$, budget $H \in (0, \infty)$.}
Root node $n_0 \gets (\O, \O, \mathcal{C})$\;
Current lower bound $L \gets 0$ and best solution $\mathcal{X} \gets \O$\;
Candidate set $\mathfrak{T} \gets \{ n_0\}$\;
\While{not finished}{
	\lIf{$\mathfrak{T} = \O$}{\Return{$\mathcal{X}, L$}}
	\Else{
		$n \gets \argmax\{ v_{\mathrm{LP}}^*[n] : n \in \mathfrak{T}\}$\;
		Remove $n$ from $\mathfrak{T}$\;
		\For{each child $n'$ of $n$}{
			\If{$L < v_{\mathcal{I}}[n']$}{
				$L \gets v_{\mathcal{I}}[n']$\;
				Update $\mathcal{X}$ to the $\mathcal{I}$-set associated with $n'$\;
			}
			\lIf{$v_{\mathrm{LP}}^*[n] > L$ and $n'$ is not a leaf}{add $n'$ to $\mathfrak{T}$}
		}
		\For{$n'' \in \mathfrak{T}$} {
			\lIf{$L >  v_{\mathrm{LP}}^*[n'']$}{remove $n''$ from $\mathfrak{T}$}
		}
	}
}
\end{algorithm}
} \else {
\begin{algorithm}[h] 
\caption{엘리스의 문제를 위한 분지한계법.} \label{ellisbnb}
\KwIn{효용 모수 $t \in(0, \infty)^m$, 합격 확률 $f \in (0, 1]^m$, 지원 비용 $g \in (0, \infty)^m$, 예산 $H \in (0, \infty)$.}
뿌리 마디 $n_0 \gets (\O, \O, \mathcal{C})$\;
현재 하한 $L \gets 0$ 및 최적해 $\mathcal{X} \gets \O$\;
후보 집합 $\mathfrak{T} \gets \{ n_0\}$\;
\While{종료되지 않음}{
	\lIf{$\mathfrak{T} = \O$}{\Return{$\mathcal{X}, L$}}
	\Else{
		$n \gets \argmax\{ v_{\mathrm{LP}}^*[n] : n \in \mathfrak{T}\}$\;
		$\mathfrak{T}$에서 $n$을 제거\;
		\For{$n$의 각 새끼 마디 $n'$}{
			\If{$L < v_{\mathcal{I}}[n']$}{
				$L \gets v_{\mathcal{I}}[n']$\;
				$\mathcal{X}$을 $n'$에 대응하는 $\mathcal{I}$-집합으로 수정\;
			}
			\lIf{$v_{\mathrm{LP}}^*[n] > L$이고 $n'$이 잎 마디 아님}{$\mathfrak{T}$에 $n'$을 추가}
		}
		\For{$n'' \in \mathfrak{T}$} {
			\lIf{$L >  v_{\mathrm{LP}}^*[n'']$}{$\mathfrak{T}$에서 $n''$을 제거}
		}
	}
}
\end{algorithm}
} \fi

\begin{theorem}[\ifen Validity of Algorithm \ref{ellisbnb}\else 알고리즘 \ref{ellisbnb}의 타당성\fi]
\ifen Algorithm \ref{ellisbnb} produces an optimal application portfolio for Ellis's problem.
\else 알고리즘 \ref{ellisbnb}은(는) 엘리스 문제의 최적 지원 포트폴리오를 출력한다. \fi
\end{theorem}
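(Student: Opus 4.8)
The plan is to prove correctness by the standard branch-and-bound argument in three ingredients: that the branching rule enumerates the feasible region exhaustively, that the pruning is sound because $v^*_{\mathrm{LP}}[n]$ never underestimates any portfolio on a branch, and that a suitable loop invariant guarantees the branch containing an optimum is never discarded. Throughout I will write $v^*$ for the optimal valuation of Ellis's problem and note the elementary sub-fact that the incumbent value $L$ is nondecreasing and always equals $v(\mathcal{I})$ for some feasible $\mathcal{I}$, so $L \leq v^*$ at all times.

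First I would check that the tree exhausts the feasible set. Each branching step fixes one negotiable school $j \in \mathcal{N}$ to $x_j = 1$ (move to $\mathcal{I}$) or $x_j = 0$ (move to $\mathcal{O}$), while the two atypical cases only discard portfolios that are infeasible or already fully determined. Consequently the branch rooted at a node $n = (\mathcal{I}, \mathcal{O}, \mathcal{N})$ is exactly the set of feasible portfolios $\mathcal{X}$ with $\mathcal{I} \subseteq \mathcal{X} \subseteq \mathcal{I} \cup \mathcal{N}$, and the children partition this set; hence every feasible portfolio appears as the $\mathcal{I}$-set of some leaf. Next I would invoke the soundness of the bound established in the paragraph preceding the theorem: by Lemma \ref{eliminationtheorem} the objective of \eqref{nodenlpsubproblem} equals $v(\mathcal{I} \cup \mathcal{X})$, and since every product factor in $v(x)$ lies in $[0,1]$ the relaxation \eqref{nodelprelaxation} dominates it, so $v^*_{\mathrm{LP}}[n]$ upper-bounds the valuation of every portfolio on the branch rooted at $n$, while $v_{\mathcal{I}}[n]$ is a feasible lower bound.

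The core of the argument is the loop invariant: at the start of every iteration, either the incumbent $(\mathcal{X}, L)$ is already optimal, or some node still in $\mathfrak{T}$ has a branch containing an optimal portfolio. I would verify it holds initially, since the root's branch is the entire feasible set, and is preserved by each iteration. When the selected node $n$ is expanded, an optimal portfolio lying in its branch must fall into one of its children's branches; if that child is a leaf, the incumbent update in the child loop sets $(\mathcal{X}, L)$ to an optimum, and otherwise the child survives the add-test because $v^*_{\mathrm{LP}}[n] \geq v^* > L$ whenever $L$ is not yet optimal. The fathoming step is safe for the same reason: a node $n''$ whose branch contains an optimal portfolio has $v^*_{\mathrm{LP}}[n''] \geq v^* \geq L$, so the deletion test $L > v^*_{\mathrm{LP}}[n'']$ fails. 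I expect this invariant-preservation step to be the main obstacle, since it requires tracking the inequalities carefully enough that the node carrying the optimum passes both the add-test (which, note, uses the parent's bound $v^*_{\mathrm{LP}}[n]$) and every later fathom-test; the rest is bookkeeping.

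Finally I would establish termination: each child strictly decreases $|\mathcal{N}|$ or empties it, so the tree has depth at most $m$ and finitely many nodes, each removed from $\mathfrak{T}$ at most once and never reinserted, whence $\mathfrak{T}$ eventually becomes empty. At that moment the invariant rules out its second alternative, so the incumbent $(\mathcal{X}, L)$ must be optimal, and the algorithm returns it. This completes the proof.
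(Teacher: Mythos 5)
Your proposal is correct, and it is a sound proof of the theorem, but it is organized quite differently from the paper's own argument. The paper splits the proof into the same two halves you do (correctness given termination, then termination), but handles each half differently. For correctness given termination, the paper simply observes that an optimal solution exists among the leaves of the tree and appeals to the preceding discussion of the bounds; your explicit loop invariant (either the incumbent is optimal, or some node of $\mathfrak{T}$ still carries an optimum in its branch) fills in exactly the bookkeeping the paper leaves implicit, including the detail you rightly flagged that the add-test uses the \emph{parent's} bound $v_{\mathrm{LP}}^*[n]$, which still works because a parent whose branch contains an optimum satisfies $v_{\mathrm{LP}}^*[n] \geq v^*$. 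For termination, the two arguments genuinely diverge: the paper proves that no partition $(\mathcal{I}, \mathcal{O}, \mathcal{N})$ is ever generated twice, by tracing two putative duplicates up to their first common ancestor and exhibiting a school $j \in \mathcal{I}_{12} \cap \mathcal{O}_{12}$, contradicting that the triple is a partition of $\mathcal{C}$; you instead note that every child has strictly smaller $|\mathcal{N}|$ than its parent, so the tree has depth at most $m+1$ and branching factor at most two, hence finitely many nodes, each entering and leaving $\mathfrak{T}$ at most once. Your route is shorter and suffices for the theorem as stated; the paper's no-duplication argument proves something stronger — that the algorithm never regenerates the same subproblem — which is of independent interest for assessing the algorithm's efficiency but is not needed for mere termination.
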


\begin{proof}
\ifen
Because an optimal solution exists among the leaves of the tree, the discussion above implies that as long as the algorithm terminates, it returns an optimal solution.

To show that the algorithm does not cycle, it suffices to show that no node is generated twice. Suppose not: that two distinct nodes $n_1$ and $n_2$ share the same partition $(\mathcal{I}_{12}, \mathcal{O}_{12}, \mathcal{N}_{12})$. Trace each node's lineage up the tree and let $n$ denote the \emph{first} node at which the lineages meet. $n$ must have two children, or else its sole child is a common ancestor of $n_1$ and $n_2$, and one of these children, say $n_3$, must be an ancestor of $n_1$ while the other, say $n_4$, is an ancestor of $n_2$.  Write $n_3 = (\mathcal{I}_{3}, \mathcal{O}_{3}, \mathcal{N}_{3})$ and $n_4 = (\mathcal{I}_{4}, \mathcal{O}_{4}, \mathcal{N}_{4})$. By the node-generation rule, there is a school $j$ in $\mathcal{I}_3 \cap \mathcal{O}_4$, and the $\mathcal{I}$-set (respectively, $\mathcal{O}$-set) for any descendant of $\mathcal{I}_3$ (respectively, $\mathcal{O}_4$) is a superset of  $\mathcal{I}_3$ (respectively, $\mathcal{O}_4$). Therefore, $j \in \mathcal{I}_{12} \cap \mathcal{O}_{12}$, meaning that $(\mathcal{I}_{12}, \mathcal{O}_{12}, \mathcal{N}_{12})$ is not a partition of $\mathcal{C}$, a contradiction. 
\else
나무의 잎 마디 중 최적해가 반드시 존재하므로 위의 논의에 따라 알고리즘이 종료한다면 최적해를 출력하는 것을 알 수 있다.

알고리즘에서 회로가 생기지 않음을 보이기 위해 어떤 한 마디가 두 번 생성되지 않는 것을 증명하면 충분하다. 모순을 보이기 위해 같은 분할 $(\mathcal{I}_{12}, \mathcal{O}_{12}, \mathcal{N}_{12})$를 가지는 상이한 마디 $n_1$과 $n_2$가 생성된다고 하자. 나무에 올라가면서 두 마디의 가계가 서로 만나는 첫 마디를 $n$이라고 하자. $n$은 새끼 마디 하나만 가지면 그 자체가  $n_1$과 $n_2$의 공통 선조가 되며 이는 모순이므로 $n$ 새끼의 개수가 2임을 알 수 있다. 또한 그중 하나는 $n_1$의 선조며 다른 하나는 $n_2$의 선조다. 각각 $n_3 = (\mathcal{I}_{3}, \mathcal{O}_{3}, \mathcal{N}_{3})$ 그리고 $n_4 = (\mathcal{I}_{4}, \mathcal{O}_{4}, \mathcal{N}_{4})$라고 하자. 마디 생성 규칙에 따라 $\mathcal{I}_3 \cap \mathcal{O}_4$에 속한 학교 $j$가 존재하며,  $\mathcal{I}_3$ ($\mathcal{O}_4$)의 모든 후손 마디의 $\mathcal{I}$-집합 ($\mathcal{O}$-집합)은  $\mathcal{I}_3$ ($\mathcal{O}_4$)의 확대 집합이다. 따라서 $j \in \mathcal{I}_{12} \cap \mathcal{O}_{12}$가 되며 이는 $(\mathcal{I}_{12}, \mathcal{O}_{12}, \mathcal{N}_{12})$가 $\mathcal{C}$의 분할이 아님을 의미한다. 모순. 
\fi
\end{proof}

\ifen
The branch-and-bound algorithm is an interesting benchmark, but as a kind of enumeration algorithm, its computation time grows rapidly in the problem size, and unlike the approximation scheme we propose later on, there is no guaranteed bound on the approximation error after a fixed number of iterations. Moreover, when there are many schools in $\mathcal{N}$, the LP upper bound may be much higher than $v_{\mathcal{I}}[n']$. This means that significant fathoming operations do not occur until the algorithm has explored deep into the tree, at which point the bulk of the computational effort has already been exhausted. In our numerical experiments, Algorithm \ref{ellisbnb} was ineffective on instances larger than about $m=35$ schools, and therefore it does not represent a significant improvement over na\"ive enumeration. We speculate that the algorithm can be improved by incorporating cover inequalities (Wolsey 1998, \S\,9.3).  
\else
분지한계법은 기술적으로 유리한 기준점이지만, 일종의 열거 해법이므로 계산 시간이 문제 크기와 지수적으로 늘어난다. 그리고 뒤에서 제안하는 근사 해법과 달리, 주어진 반복한계의 개수에 대한 정확성 보장이 없다. 또한 $\mathcal{N}$에 학교가 많을 때, 선형 완화 문제 상한은 $v_{\mathcal{I}}[n']$보다 매우 높을 수도 있다. 이는 해법이 나무에 어느 정도 깊게 파고들어야 마디를 제외할 수 있음을 의미하며, 그때 계산 노력의 대부분은 이미 쏟은 것이다. 수리 실험에서, 알고리즘 \ref{ellisbnb}은(는) 약 $m=35$개의 학교를 넘는 인스턴스에 대해 비효율적이었으며 이는 단순한 열거법에 비해 큰 개선이 아니다. 하지만, 커버 부등식을 도입하여 해법을 개선할 가능성이 보인다 (Wolsey 1998, \S\,9.3).
\fi

\ifen \subsection{Pseudopolynomial-time dynamic program} \else\subsection{의사 다항 시간 동적 계획} \fi
\ifen
In this subsection, we assume, with a small loss of generality, that $g_j \in \mathbb{N}$ for $j = 1\dots m$ and $H \in\mathbb{N}$, and provide an algorithmic solution to Ellis's problem that runs in $O(Hm + m\log m)$-time. The algorithm resembles a familiar dynamic programming algorithm for the binary knapsack problem (Dantzig 1957; \emph{Wikipedia}, s.v. ``Knapsack problem''). Because we cannot assume that $H \leq m$ (as was the case in Alma's problem), this represents a pseudopolynomial-time solution (Garey and Johnson 1979, \S\,4.2). However, it is quite effective for typical college-application instances in which the application costs are small integers.
\else
본 절에서 일반성을 약간 제한하여 $g_j \in \mathbb{N}$ for $j = 1\dots m$과 $H \in\mathbb{N}$임을 가정한다. 이때 엘리스의 문제를 위한 $O(Hm + m\log m)$-시간 해법을 제시한다. 해법은 이진 배낭 문제를 위한 익숙한 동적 계획 해법과 비슷하다 (Dantzig 1957; \emph{Wikipedia}, s.v. ``Knapsack problem''). 알마의 문제와 달리, 여기에서 $H \leq m$임을 가정할 수 없으므로 이는 의사 다항 시간 해법이라고 한다 (Garey and Johnson 1979, \S\,4.2). 그러나 지원 비용이 작은 정수가 되는 전형적인 대학 지원 문제 인스턴스에 대해 매우 효율적인 해법이다.
\fi

\ifen 
For $j = 0 \dots m$ and $h = 0 \dots H$, let $\mathcal{X}[j, h]$ denote the optimal portfolio using only the schools $\{ 1, \dots, j\}$ and costing no more than $h$, and let $V[j,h] = v(\mathcal{X}[j, h])$.  It is clear that if $j=0$ or $h=0$, then $\mathcal{X}[j, h] = \O$ and $V[j, h] = 0$.  For convenience, we also define $V[j, h] = -\infty$ for all $h < 0$.
\else
$j = 0 \dots m$와 $h = 0 \dots H$에 대해, $\{ 1, \dots, j\}$에 속한 학교만 사용하면서 지원 지출액이 $h$을 넘지 않는 최적 포트폴리오를 $\mathcal{X}[j, h]$라고 하자. 또한  $V[j,h] = v(\mathcal{X}[j, h])$이다. $j=0$ 혹은 $h=0$이면 $\mathcal{X}[j, h] = \O$이고 $V[j, h] = 0$이 되는 것은 분명하다. 편의상 $h < 0$이면 $V[j, h] = -\infty$이라고 정의하자.
\fi

\ifen
For the remaining indices, $\mathcal{X}[j, h]$ either contains $j$ or not. If it does not contain $j$, then $\mathcal{X}[j, h] = \mathcal{X}[j-1, h]$. On the other hand, if  $\mathcal{X}[j, h]$ contains $j$, then its valuation is $(1 - f_j) v(\mathcal{X}[j, h]\setminus \{j\}) + f_j t_j$. This requires that $\mathcal{X}[j, h]\setminus \{j\}$ make optimal use of the remaining budget over the remaining schools; that is, $\mathcal{X}[j, h] = \mathcal{X}[j-1, h - g_j] \cup\{j\}$. From these observations, we obtain the following Bellman equation for $ j = 1\dots m$ and $h = 1\dots H$:
\else
남은 지표에 대해, $\mathcal{X}[j, h]$는 $j$를 포함하거나 포함하지 않는다. $j$를 포함하지 않는다면 $\mathcal{X}[j, h] = \mathcal{X}[j-1, h]$이다. $\mathcal{X}[j, h]$가 $j$를 포함한다면, 그의 가치는  $(1 - f_j) v(\mathcal{X}[j, h]\setminus \{j\}) + f_j t_j$이다. 따라서 $\mathcal{X}[j, h]\setminus \{j\}$는 남은 예산과 남은 학교에 대한 최적 포트폴리오다. 즉,  $\mathcal{X}[j, h] = \mathcal{X}[j-1, h - g_j] \cup\{j\}$이다. 이 관찰에 따라 $ j = 1\dots m$와 $h = 1\dots H$에 대해 다음 같은 Bellman 식을 얻는다.
\fi
\begin{align}
V[j, h] = \max\bigl\{ V[j-1, h], (1 - f_j) V[j-1, h-g_j] + f_j t_j \bigr\}
\end{align}
\ifen 
with the convention that $ -\infty \cdot 0 = -\infty$. The corresponding optimal portfolios are computed by observing that $\mathcal{X}[j, h]$ contains $j$ if and only if $V[j, h]> V[j-1, h]$. The optimal solution is given by $\mathcal{X}[m, H]$. The algorithm below performs these computations and outputs the optimal portfolio $\mathcal{X}$. 
\else
단, $ -\infty \cdot 0 = -\infty$으로 처리한다. 그러면 $\mathcal{X}[j, h]$가  $j$를 포함할 필수충분 조건이 $V[j, h]> V[j-1, h]$임을 관찰하면 대응되는 최적 포트폴리오를 계산할 수 있으며 전역 최적해는 $\mathcal{X}[m, H]$이다. 아래 알고리즘은 이 계산을 수행하고 최적 포트폴리오 $\mathcal{X}$를 출력한다.
\fi

\ifen {
\begin{algorithm}[h] 
\caption{Dynamic program for Ellis's problem with integral application costs.} \label{ellisDP1}
\KwIn{Utility values $t \in(0, \infty)^m$, admissions probabilities $f \in (0, 1]^m$, application costs $g \in \mathbb{N}^m$, budget $H \in\mathbb{N}$.}
Index schools in ascending order by $t$\;
Fill a lookup table with the values of $V[j, h]$\; \label{Vcreatedlookuptable}
$h \gets H$\;
$\mathcal{X} \gets \O$\;
\For{$j = m, m-1, \dots, 1$}{
	\If{$V[j-1, h] < V[j, h]$}{
		$\mathcal{X} \gets \mathcal{X}\cup\{j\}$\; 
		$h \gets h - g_j$\;
	}
}
\Return{$\mathcal{X}$}
\end{algorithm}
} \else {
\begin{algorithm}[h] 
\caption{정수 지원 비용의 엘리스 문제를 위한 동적 계획 해법.} \label{ellisDP1}
\KwIn{효용 모수 $t \in(0, \infty)^m$, 합격 확률 $f \in (0, 1]^m$, 지원 비용 $g \in \mathbb{N}^m$, 예산 $H \in\mathbb{N}$.}
$t$의 순서대로 학교를 배열한다\;
$V[j, h]$의 값으로 표를 채운다\; \label{Vcreatedlookuptable}
$h \gets H$\;
$\mathcal{X} \gets \O$\;
\For{$j = m, m-1, \dots, 1$}{
	\If{$V[j-1, h] < V[j, h]$}{
		$\mathcal{X} \gets \mathcal{X}\cup\{j\}$\; 
		$h \gets h - g_j$\;
	}
}
\Return{$\mathcal{X}$}
\end{algorithm}
}\fi

\begin{theorem}[\ifen Validity of Algorithm \ref{ellisDP1}\else 알고리즘 \ref{ellisDP1}의 타당성\fi]
\ifen 
Algorithm \ref{ellisDP1} produces an optimal application portfolio for Ellis's problem in $O(H m + m \log m)$-time.
\else
알고리즘 \ref{ellisDP1}은(는) $O(H m + m \log m)$-시간 안의 엘리스 문제의 최적 포트폴리오를 출력한다.
\fi
\end{theorem}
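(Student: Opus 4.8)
The plan is to separate the claim into a correctness argument and a running-time count, and to spend most of the effort on correctness. The natural vehicle is an induction on the first index $j$ of the lookup table, where the invariant to maintain is that $V[j,h]$ equals the valuation of an optimal portfolio drawn from $\{1,\dots,j\}$ subject to cost at most $h$, for every $h = 0 \dots H$ (with $V[j,h]=-\infty$ for $h<0$). The base case $j=0$ is immediate, since the empty portfolio is the only feasible one and $V[0,h]=0$. The single structural fact that powers the recurrence is the decomposition
\begin{equation}
v(\mathcal{X}) = (1-f_j)\,v(\mathcal{X}\setminus\{j\}) + f_j t_j
\end{equation}
valid whenever $j$ is the \emph{highest-utility} school in $\mathcal{X}$. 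Because the schools have been sorted in ascending order of $t$ and any $\mathcal{X}[j,h]$ draws only from $\{1,\dots,j\}$, a portfolio containing $j$ automatically has $j$ as its top school, so the identity applies. I would derive it either directly from the closed form \eqref{closedformportfoliovaluationX}---the term for $j$ contributes $f_j t_j$ with an empty product while every other term picks up a factor $(1-f_j)$---or as the special case of Lemma \ref{eliminationtheorem} in which $c_j$ dominates, so that $\bar t_i = (1-f_j)t_i$ for all $i$ and homogeneity of $v$ in $t$ gives $v(\mathcal{X}\setminus\{j\};\bar t) = (1-f_j)v(\mathcal{X}\setminus\{j\};t)$.

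For the inductive step I would argue both inequalities. Any feasible $\mathcal{X}\subseteq\{1,\dots,j\}$ with $\sum_{i\in\mathcal{X}}g_i \leq h$ either omits $j$, whence $v(\mathcal{X})\leq V[j-1,h]$ by hypothesis, or contains $j$, in which case $\mathcal{X}\setminus\{j\}$ is feasible over $\{1,\dots,j-1\}$ with budget $h-g_j$, so the decomposition together with $1-f_j\geq 0$ gives $v(\mathcal{X}) \leq (1-f_j)V[j-1,h-g_j]+f_j t_j$; hence $V[j,h]$ is an upper bound on every feasible valuation. Conversely both candidate values are attained by concrete feasible portfolios, so the maximum is achieved and the invariant holds. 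The convention $-\infty\cdot 0=-\infty$ matters precisely when $f_j=1$ and $h-g_j<0$: it blocks the spurious value $0\cdot(-\infty)$ from admitting an infeasible inclusion of $j$. The global optimum is then $V[m,H]$. For the traceback I would check that $V[j,h]>V[j-1,h]$ forces every optimal portfolio over $\{1,\dots,j\}$ with budget $h$ to contain $j$ (omitting $j$ caps the value at $V[j-1,h]$), whereas equality permits the choice $x_j=0$; thus adding $j$ and decrementing $h$ by $g_j$ exactly when the strict inequality holds recovers a genuinely optimal portfolio.

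Finally, the running time is a routine count: sorting $t$ is $O(m\log m)$; the table built at line \ref{Vcreatedlookuptable} has $(m+1)(H+1)$ entries, each computable in constant time from two previously filled entries, for $O(Hm)$; and the traceback loop runs $m$ times with $O(1)$ work per iteration. Summing yields $O(Hm + m\log m)$. I expect the only genuinely delicate point to be the justification that $j$ is always the top school in the sub-instance---once that is in hand, the decomposition, and with it the entire Bellman recurrence, is forced; the boundary bookkeeping (the $h<0$ entries and the $f_j=1$ degeneracy) is the secondary thing to get exactly right.
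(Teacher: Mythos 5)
Your proposal is correct and takes essentially the same approach as the paper: the paper's proof simply appeals to its ``foregoing discussion,'' which is exactly the Bellman recurrence you establish---sort by $t$ so that $j$ is the top school in any portfolio drawn from $\{1,\dots,j\}$ containing it, apply the decomposition $v(\mathcal{X}) = (1-f_j)\,v(\mathcal{X}\setminus\{j\}) + f_j t_j$, and count $O(m\log m)$ for sorting plus $O(Hm)$ for the table. Your write-up is in fact more careful than the paper's terse argument (the explicit induction, the two-sided inequality, the $-\infty\cdot 0$ convention, and the precise traceback condition), but it is the same proof.
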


\begin{proof}
\ifen
Optimality follows from the foregoing discussion. Sorting $t$ is $O(m \log m)$. The bottleneck step is the creation of the lookup table for $V[j, h]$ in line \ref{Vcreatedlookuptable}. Each entry is generated in unit time, and the size of the table is $O(Hm)$. 
\else
위 논의에 따라 최적성이 성립한다. $t$를 배열하는 시간은 $O(m \log m)$이다. 병목 단계는 \ref{Vcreatedlookuptable}줄에서 $V[j, h]$의 표를 만드는 것이다. 각 원소를 단위 시간 안에 생성할 수 있으며 표의 크기가 $O(Hm)$이다.
\fi
\end{proof}

\ifen \subsection{Fully polynomial-time approximation scheme}\else \subsection{완전 다항 시간 근사 해법}\fi \label{fptashead}
\ifen 
As with the knapsack problem, Ellis's problem admits a complementary dynamic program that iterates on the value of the cheapest portfolio instead of on the cost of the most valuable portfolio. We will use this algorithm as the basis for an FPTAS for Ellis's problem that uses $O(m^3 / \varepsilon)$-time and -space.
\else
엘리스의 문제는 배낭 문제와 같이, 가치가 가장 높은 포트폴리오의 비용 대신 비용이 가장 낮은 포트폴리오의 가치를 탐색하는 보완적인 동적 계획이 존재한다. 이를 기반으로  $O(m^3 / \varepsilon)$-시간과 -공간을 사용하는 엘리스 문제의 FPTAS를 도출할 수 있다.
\fi

\ifen 
We will represent approximate portfolio valuations using a fixed-point decimal with a precision of $P$, where $P$ is the number of digits to retain after the decimal point. Let $r[x] =  10^{-P}\lfloor 10^P x \rfloor$ denote the value of $x$ rounded down to its nearest fixed-point representation. Since $\bar U = \sum_{j\in \mathcal{C}} f_j t_j$ is an upper bound on the valuation of any portfolio, and since we will ensure that each fixed-point approximation is an underestimate of the portfolio's true valuation, the set $\mathcal{V}$ of possible valuations possible in the fixed-point framework is finite:
\else
포트폴리오의 근사적 가치를 정확도 $P$로 구성된 고정소수점 십진수(fixed-point decimal)로 나타내자. 다, $P$는 소수점 뒤에 등장하는 숫자의 수이다. 이때 $x$를 가장 가까운 고정소수점 십진수로 내림한 것을 $r[x] =  10^{-P}\lfloor 10^P x \rfloor$라고 하자. 임의의 포트폴리오의 가치가 $\bar U = \sum_{j\in \mathcal{C}} f_j t_j$를 넘을 수 없으며, 모든 포트폴리오 가치의 고정소수점 십진수 근사가 실제값보다 작게 나타내도록 한다. 따라서 고정소수점 환경에서 발생할 수 있는 포트폴리오 가치로 이루어진 집합 $\mathcal{V}$는 유한 집합이다.
\fi
\begin{equation}
\mathcal{V} = \Bigl\{0, 1\times 10^{-P}, 2\times 10^{-P}, \dots, r\bigl[\bar U- 1\times 10^{-P}\bigr], r\bigl[\bar U\bigr]\Bigr\}
\end{equation}
\ifen Then $|\mathcal{V} | = \bar U \times 10^P + 1$.
\else 그러면 $|\mathcal{V} | = \bar U \times 10^P + 1$이다.\fi

\ifen
For the remainder of this subsection, unless otherwise specified, the word \emph{valuation} refers to a portfolio’s valuation within the fixed-point framework, with the understanding that this is an approximation. We will account for the approximation error below when we prove the dynamic program’s validity. 
\else
본 항 나머지에서, 다른 언급이 없으면 포트폴리오의 ``가치''라고 하면 고정소수점 환경 안에서의 가치를 의미하며 이것이 근삿값임을 유념한다. 근사 오차는 나중에 동적 계획의 타당성을 증명할 때 처리한다.
\fi

\ifen
For integers $0 \leq j \leq m$ and $v \in [-\infty, 0) \cup \mathcal{V}$, let $\mathcal{W}[j, v]$ denote the least expensive portfolio that uses only schools $\{ 1, \dots, j\}$ and has valuation at least $v$, if such a portfolio exists. Denote its cost by $G[j, v] = \sum_{j\in \mathcal{W}[j, v]} g_j$, where $G[j, v] = \infty$ if $\mathcal{W}[j, v]$ does not exist. It is clear that if $v \leq 0$, then $\mathcal{W}[j, v] = \O$ and $G[j, h] = 0$, and that if $j = 0$ and $v > 0$, then $G[j, h] = \infty$.  For the remaining indices (where $j, v > 0$), we claim that
\else
정수 $0 \leq j \leq m$ 그리고 $v \in [-\infty, 0) \cup \mathcal{V}$에 대해, $\{ 1, \dots, j\}$에 속한 학교만 사용하면 최소한 $v$의 가치를 가지는 포트폴리오가 존재한다면 그중 가장 비용이 낮은 포트폴리오를  $\mathcal{W}[j, v]$라고 하자. 그의 비용을 $G[j, v] = \sum_{j\in \mathcal{W}[j, v]} g_j$라고 하자. 단, $\mathcal{W}[j, v]$가 존재하지 않으면 $G[j, v] = \infty$으로 처리한다. $v \leq 0$이면 $\mathcal{W}[j, v] = \O$이고 $G[j, h] = 0$이며, $j = 0$이고 $v > 0$이면 $G[j, h] = \infty$인 것이 자명하다. 남은 지표(즉 $j, v > 0$)는 다음을 만족한다.
\fi
\begin{align} \label{recursionrelationforcostmindp}
G[j, v] &=
\begin{cases}
\infty, \quad & t_j < v \\
\min\bigl\{G[j-1, v], g_j + G[j-1, v - \Delta_j(v)] \bigr\}, \quad & t_j \geq v 
\end{cases}\\
\text{where}\qquad
\Delta_j (v) &= 
\begin{cases}
r\left[\frac{f_j}{1 - f_j} (t_j - v)\right], \quad & f_j < 1\\
\infty, &f_j = 1\ifen.\fi
\end{cases} \label{deltajvdef}
\end{align}
\ifen
In the $t_j < v$ case, any feasible portfolio must be composed of schools with utility less than $v$, and therefore its valuation can not equal $v$, meaning that $\mathcal{W}[j, v]$ is undefined. In the $t_j \geq v$ case, the first argument to $\min\{\}$ says simply that omitting $j$ and choosing $\mathcal{W}[j-1, v]$ is a permissible choice for $\mathcal{W}[j, v]$. If, on the other hand, $j \in \mathcal{W}[j, v]$, then
\else
$t_j < v$인 경우, 가능한 포트폴리오는 효용이 $v$보다 작은 학교로 구성되므로 그의 가치는 $v$를 넘을 수가 없고 $\mathcal{W}[j, v]$가 정의되지 않는다. $t_j \geq v$인 경우, $\min\{\}$의 첫 입력값은 $j$를 제외하고 $\mathcal{W}[j-1, v]$로 지원하는 것이 가능한 선택임을 의미한다. 반면에 $j \in \mathcal{W}[j, v]$이라면 다음이 성립한다.
\fi
\begin{equation} \label{solvemeforvwjvminusj}
v(\mathcal{W}[j, v]) = (1 - f_j )v(\mathcal{W}[j, v]\setminus \{j\}) + f_j t_j\ifen.\fi\end{equation}
\ifen
Therefore, the subportfolio $\mathcal{W}[j, v]\setminus \{j\}$ must have a valuation of at least $v - \Delta$, where $\Delta$ satisfies $v = (1 - f_j )(v - \Delta) + f_j t_j $. When $f_j < 1$, the solution to this equation is $ \Delta = \frac{f_j}{1 - f_j} (t_j - v)$. By rounding this value down, we ensure that the true valuation of $\mathcal{W}[j, v]$ is \emph{at least} $v - \Delta$. When $t_j \geq v$ and $f_j = 1$, the singleton $\{j\}$ has $v(\{j\}) \geq v$, so
\else
이는 부분 포트폴리오 $\mathcal{W}[j, v]\setminus \{j\}$가 최소한 $v - \Delta$의 가치를 가져야 한다고 의미한다. 단,  $\Delta$는 $v = (1 - f_j )(v - \Delta) + f_j t_j $를 만족한다. $f_j < 1$일 때, 이 식의 해는 $ \Delta = \frac{f_j}{1 - f_j} (t_j - v)$이다. 이값을 내림함으로써 $\mathcal{W}[j, v]$가 `최소한' $v - \Delta$임을 보장한다. $t_j \geq v$이고 $f_j = 1$일 때, 단일 원소 집합 $\{j\}$에 대해 $v(\{j\}) \geq v$이므로
\fi
\begin{equation}G[j, v] = \min\bigl\{G[j-1, v], g_j \bigr\}.\end{equation}
\ifen
Defining $\Delta_j(v) = \infty$ in this case ensures that $g_j + G[j-1, v-\Delta_j(v)] = g_j+ G[j-1, v-\infty] = g_j $ as required.
\else
이때 $\Delta_j(v) = \infty$으로 정의하면 $g_j + G[j-1, v-\Delta_j(v)] = g_j+ G[j-1, v-\infty] = g_j $와 같은 적절한 값이 나온다.
\fi
%

\ifen
Once $G[j, v]$ has been calculated at each index, the associated portfolio can be found by applying the observation that $\mathcal{W}[j, v]$ contains $j$ if and only if $G[j, v] < G[j-1, v]$. Then an approximate solution to Ellis's problem is obtained by computing the largest achievable objective value $\max\{ w: G[m, w] \leq H\}$ and corresponding portfolio.
\else
각 지표에 대해 $G[j, v]$를 계산한 다음에,  $\mathcal{W}[j, v]$가 $j$를 포함할 필수충분 조건인 $G[j, v] < G[j-1, v]$를 적용하면 해당 최적 포트폴리오를 구할 수 있다. 그러면 가능하면서 최대한 목적 함숫값 $\max\{ w: G[m, w] \leq H\}$와 해당 포트폴리오를 계산하면 엘리스 문제의 근사해를 얻을 수 있다.
\fi

\ifen
\begin{algorithm}[h] 
\caption{Fully polynomial-time approximation scheme for Ellis's problem.} \label{ellisDP3}
\KwIn{Utility values $t \in  (0, \infty)^m$, admissions probabilities $f \in (0, 1]^m$, application costs $g \in (0, \infty)^m$, budget $H \in (0, \infty)^m$.}
\KwParams{Tolerance $\varepsilon \in (0, 1)$.}
Index schools in ascending order by $t$\;
Set precision $P \gets \bigl\lceil\log_{10}\left(m^2 / \varepsilon \bar U\right)\bigr\rceil$\;
Fill a lookup table with the entries of $G[j, h]$\; \label{createdlookuptable}
$v\gets  \max\{ w \in \mathcal{V} : G[m, w] \leq H\}$\; \label{vrecordedhere}
$\mathcal{X} \gets \O$\;
\For{$j = m, m-1, \dots, 1$}{
	\If{$G[j, v]< \infty$ and $G[j, v] < G[j-1, v]$}{
		$\mathcal{X} \gets \mathcal{X}\cup\{j\}$\; 
		$v \gets v -  \Delta_j(v)$\;
	}
}
\Return{$\mathcal{X}$}
\end{algorithm}
\else
\begin{algorithm}[h] 
\caption{엘리스의 문제를 위한 완전 다항 시간 근사 해법.} \label{ellisDP3}
\KwIn{효용 모수 $t \in  (0, \infty)^m$, 합격 확률 $f \in (0, 1]^m$, 지원 비용 $g \in (0, \infty)^m$, 예산 $H \in (0, \infty)^m$.}
\KwParams{허용치 $\varepsilon \in (0, 1)$.}
$t$의 순서대로 학교를 배열한다\;
정확도 $P \gets \bigl\lceil\log_{10}\left(m^2 / \varepsilon \bar U\right)\bigr\rceil$\;
$G[j, h]$의 값으로 표를 채운다\; \label{createdlookuptable}
$v\gets  \max\{ w \in \mathcal{V} : G[m, w] \leq H\}$\; \label{vrecordedhere}
$\mathcal{X} \gets \O$\;
\For{$j = m, m-1, \dots, 1$}{
	\If{$G[j, v]< \infty$이고 $G[j, v] < G[j-1, v]$}{
		$\mathcal{X} \gets \mathcal{X}\cup\{j\}$\; 
		$v \gets v -  \Delta_j(v)$\;
	}
}
\Return{$\mathcal{X}$}
\end{algorithm}
\fi

\begin{theorem}[\ifen Validity of Algorithm \ref{ellisDP3}\else 알고리즘 \ref{ellisDP3}의 타당성\fi]
\ifen
Algorithm \ref{ellisDP3} produces a $(1 - \varepsilon)$-optimal application portfolio for Ellis's problem in $O(m^3 /\varepsilon)$-time. 
\else
알고리즘 \ref{ellisDP3}은(는) $O(m^3 /\varepsilon)$-시간 안에 엘리스의 문제를 위한 $(1 - \varepsilon)$-근사 최적 포트폴리오를 출력한다.
\fi
\end{theorem}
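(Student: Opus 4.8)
The plan is to prove the two assertions separately: that the portfolio returned by Algorithm \ref{ellisDP3} is $(1-\varepsilon)$-optimal, and that the running time is $O(m^3/\varepsilon)$. For the accuracy claim I would fix a true optimal portfolio $\mathcal{X}^* = \{j_1 < \cdots < j_k\}$ with $\text{OPT} = v(\mathcal{X}^*)$ and track how faithfully the fixed-point dynamic program reproduces it. The natural device is a \emph{forward} rounded valuation sequence: set $u_0 = 0$ and $u_i = r[(1-f_{j_i})u_{i-1} + f_{j_i}t_{j_i}]$ for $i = 1 \dots k$, so that $u_i$ is the rounded-down valuation of the prefix $\{j_1, \dots, j_i\}$, computed from the exact identity $v(\mathcal{Y}\cup\{j\}) = (1-f_j)v(\mathcal{Y}) + f_j t_j$ for appending a top-indexed school (cf.\ the derivation of Lemma \ref{eliminationtheorem}).

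First I would show by induction on $i$ that each $u_i$ is \emph{reachable}, i.e.\ $G[j_i, u_i] \leq \sum_{l \leq i} g_{j_l}$. Since $u_i \leq v_i \leq t_{j_i}$, the second branch of \eqref{recursionrelationforcostmindp} applies, and the crux is to verify that the subportfolio target $u_i - \Delta_{j_i}(u_i)$ is met by $u_{i-1}$. Unfolding \eqref{deltajvdef} and using $u_i \leq (1-f_{j_i})u_{i-1} + f_{j_i}t_{j_i}$, the unrounded quantity $u_i - \tfrac{f_{j_i}}{1-f_{j_i}}(t_{j_i}-u_i)$ simplifies to $(u_i - f_{j_i}t_{j_i})/(1-f_{j_i}) \leq u_{i-1}$; because $\Delta_{j_i}(u_i)$ is the rounded-\emph{down} $\Delta$ and both $u_i - \Delta_{j_i}(u_i)$ and $u_{i-1}$ are multiples of $10^{-P}$, this forces $u_i - \Delta_{j_i}(u_i) \leq u_{i-1}$. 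Monotonicity of $G[j, \cdot]$ in both arguments then closes the induction.

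Next I would bound the \emph{value loss}. Writing $v_i$ for the true prefix valuations and $e_i = v_i - u_i$, rounding gives $e_i \leq (1-f_{j_i})e_{i-1} + 10^{-P}$; since every $1 - f_{j_i} \leq 1$, this telescopes to $e_k \leq k\,10^{-P} \leq m\,10^{-P}$. It is precisely the damping by the factors $1 - f_{j_i}$ that keeps the error linear rather than exponential in $m$, and I expect establishing this propagation estimate cleanly to be the main obstacle. Combining the two facts, $u_k \in \mathcal{V}$ satisfies $G[m, u_k] \leq H$ and $u_k \geq \text{OPT} - m\,10^{-P}$, so the value $v$ recorded in line \ref{vrecordedhere} is at least $u_k$. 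Because $\Delta_j(\cdot)$ is always rounded down, a short induction over the recovery loop --- using $(1-f_j)(v - \Delta_j(v)) + f_j t_j \geq v$, which follows from \eqref{solvemeforvwjvminusj} and \eqref{deltajvdef} --- shows that the returned portfolio's \emph{true} valuation is at least $v \geq \text{OPT} - m\,10^{-P}$.

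Finally I would convert this additive gap into a multiplicative one. Every singleton is feasible (each $g_j \leq H$), so $\text{OPT} \geq \max_j f_j t_j \geq \bar U/m$; with $P = \lceil\log_{10}(m^2/\varepsilon\bar U)\rceil$ we have $10^{-P} \leq \varepsilon\bar U/m^2$, whence $m\,10^{-P} \leq \varepsilon\bar U/m \leq \varepsilon\,\text{OPT}$ and the returned valuation is at least $(1-\varepsilon)\text{OPT}$. For the running time, the lookup table has $O(m\,|\mathcal{V}|)$ entries, and the choice of $P$ gives $|\mathcal{V}| = \bar U\,10^P + 1 = O(m^2/\varepsilon)$; each entry is computed in $O(1)$ from \eqref{recursionrelationforcostmindp}, the recovery loop costs $O(m)$, and sorting costs $O(m\log m)$, for a total of $O(m^3/\varepsilon)$.
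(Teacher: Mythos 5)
Your proposal is correct and follows essentially the same route as the paper's proof: compare the rounded (fixed-point) valuation of the returned portfolio with the rounded valuation of the true optimum, bound the accumulated rounding error by $m\,10^{-P}$, use feasibility of singletons to get $\mathrm{OPT} \geq \bar U/m$, and let the choice of $P$ and the table size $m \times |\mathcal{V}|$ deliver the $(1-\varepsilon)$ ratio and $O(m^3/\varepsilon)$ time. The only difference is one of rigor: your reachability induction (showing $G[j_i, u_i] \leq \sum_{l \leq i} g_{j_l}$ via the multiples-of-$10^{-P}$ argument) spells out carefully what the paper compresses into the single sentence that optimality of the output ``in the fixed-point environment'' implies $v'(\mathcal{W}) \geq v'(\mathcal{X})$.
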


\begin{proof}
\ifen
(Optimality.) Let $\mathcal{W}$ denote the output of Algorithm \ref{ellisDP3} and $\mathcal{X}$ the true optimum. We know that $v(\mathcal{X}) \leq \bar U$, and because each singleton portfolio is feasible, $\mathcal{X}$ must be more valuable than the average singleton portfolio; that is, $v(\mathcal{X}) \geq \bar U / m$.
\else
(최적성.) 알고리즘 \ref{ellisDP3}의 출력이 $\mathcal{W}$이며 실제 최적해가 $\mathcal{X}$라고 하자. $v(\mathcal{X}) \leq \bar U$이고, 모든 단일 원소 포트폴리오가 가능해이므로  $\mathcal{X}$는 평균적인 단일 원서 포트폴리오보다 가치 높을 수밖에 없다. 즉,  $v(\mathcal{X}) \geq \bar U / m$이다.
\fi

\ifen
Because $\Delta_j(v)$ is rounded down in the recursion relation defined by \eqref{recursionrelationforcostmindp} and \eqref{deltajvdef}, if $j \in \mathcal{W}[j, v]$, then the true value of $(1 - f_j) v\bigl(\mathcal{W}[j-1, v- \Delta_j(v)]\bigr) + f_j t_j$ may exceed the fixed-point valuation $v$ of $\mathcal{W}[j, v]$, but not by more than $10^{-P}$. This error accumulates additively with each school added to $\mathcal{W}$, but the number of additions is at most $m$. Therefore, where $v'(\mathcal{W})$ denotes the fixed-point valuation of $\mathcal{W}$ recorded in line \ref{vrecordedhere} of the algorithm, 
$v(\mathcal{W}) - v'(\mathcal{W}) \leq m 10^{-P}$.
\else
\eqref{recursionrelationforcostmindp}과(와) \eqref{deltajvdef}(으)로 정의된 점화식에 따라 $\Delta_j(v)$를 내림하기 때문에, $j \in \mathcal{W}[j, v]$이면 $(1 - f_j) v\bigl(\mathcal{W}[j-1, v- \Delta_j(v)]\bigr) + f_j t_j$의 실제값은 $\mathcal{W}[j, v]$의 고정소수점 가치인 $v$보다 클 수 있는데 최대한 $10^{-P}$ 만큼의 차이가 발생한다. 또한 이 오차는 $\mathcal{W}$에 학교를 추가할 때마다 가산적으로 누적되는데 추가하는 학교의 개수는 최대한 $m$이다. 따라서 알고리즘의 \ref{vrecordedhere}줄에서 기록하는 $\mathcal{W}$의 고정소수점 가치가 $v'(\mathcal{W})$일 때, $v(\mathcal{W}) - v'(\mathcal{W}) \leq m 10^{-P}$이 성립한다.
\fi

\ifen
We can define $v'(\mathcal{X})$ analogously as the fixed-point valuation of $\mathcal{X}$ when its elements are added in index order and its valuation is updated and rounded down to the nearest multiple of $10^{-P}$ at each addition in accordance with \eqref{solvemeforvwjvminusj}. By the same logic, 
$v(\mathcal{X}) - v'(\mathcal{X}) \leq m 10^{-P}$. The optimality of $\mathcal{W}$ in the fixed-point environment implies that $v'(\mathcal{W}) \geq v'(\mathcal{X})$. 
\else
$\mathcal{X}$의 원소를 지표 순서대로 추가했을 때, 각 학교를 추가하면 \eqref{solvemeforvwjvminusj}에 따라 가치를 수정하고 가장 가까운 $10^{-P}$의 배수로 내림한 값으로 $v'(\mathcal{X})$를 $v'(\mathcal{W})$와 유사하게 정의할 수 있다. 위와 같은 논리에 따라 $v(\mathcal{X}) - v'(\mathcal{X}) \leq m 10^{-P}$이다. 또한 $\mathcal{W}$가 고정소수점 환경에서 최적이므로 $v'(\mathcal{W}) \geq v'(\mathcal{X})$임을 알 수 있다.
\fi

\ifen
Applying these observations, we have
\else
위 관찰을 적용하면 다음 부등식을 도출할 수 있다.
\fi
\begin{equation}
\begin{split}
v(\mathcal{W}) &\geq v'(\mathcal{W}) \geq v'(\mathcal{X})
\geq v(\mathcal{X}) - m 10^{-P}
\geq \left(1 - \frac{m^2 10^{-P}}{\bar U}\right) v(\mathcal{X})
\geq \left(1 - \varepsilon\right) v(\mathcal{X})
\end{split}
\end{equation}
\ifen
which establishes the approximation bound. 
\else
이는 정리에서 말한 근사 계수이다.
\fi

\ifen
(Computation time.) The bottleneck step is the creation of the lookup table in line \ref{createdlookuptable}, whose size is $m \times |\mathcal{V}|$. Since
\else
(계산 시간.) \ref{createdlookuptable}줄에서 표를 채우는 것이 병목 단계이며 표의 크기는 $m \times |\mathcal{V}|$이다. 그러면
\fi
\begin{equation}
|\mathcal{V}| = \bar U \times 10^{P} + 1 = \bar U \times 10^ { \bigl\lceil\log_{10}\left(m ^2/ \varepsilon \bar U\right)\bigr\rceil} + 1
\leq\frac{m^2}{\varepsilon} \times \text{\ifen const.\else 상수\fi}
\end{equation}
\ifen
is $O(m^2/ \varepsilon)$, the time complexity is as promised.
\else
는 $O(m^2/ \varepsilon)$이므로 시간 복잡성은 정리와 같다.
\fi
\end{proof}

\ifen
Since its time complexity is polynomial in $m$ and $1 / \varepsilon$, Algorithm \ref{ellisDP3} is an FPTAS for Ellis's problem (Vazirani 2001). 

Algorithms \ref{ellisDP1} and \ref{ellisDP3} can be written using recursive functions instead of lookup tables. However, since each function references itself \emph{twice,} the function values at each index must be recorded in a lookup table or memoized to prevent an exponential number of calls from forming on the stack.
\else
계산 시간이 $m$과 $1 / \varepsilon$의 다항식으로 제한되므로  알고리즘 \ref{ellisDP3}은(는) 엘리스의 문제를 위한 FPTAS다 (Vazirani 2001). 

알고리즘 \ref{ellisDP1}과(와) 알고리즘 \ref{ellisDP3}을(를) 표 대신 재귀 함수 기반으로 구성할 수 있다. 그러나 각 재귀 함수가 스스로를 두 번 호출하므로 지수적으로 늘어나는 호출 수를 방지하기 위해 각 지표에 대응하는 함숫값을 표에 저장하거나 다른 식으로 기록해야 한다.
\fi

\ifen \subsection{Simulated annealing heuristic} \else \subsection{모의 담금질 휴리스틱}\fi\label{numericalexperiments}
\ifen
Simulated annealing is a popular local-search heuristic for nonconvex optimization problems. Because of their inherent randomness, simulated-annealing algorithms offer no accuracy guarantee; however, they are easy to implement, computationally inexpensive, and often produce solutions with a small optimality gap. In this section, we present a simulated-annealing algorithm for Ellis's problem. It is patterned on the procedure for integer programs outlined by Wolsey (1998, \S\,12.3.2).
\else
모의 담금질은 비볼록 최적화 문제를 위한 인기 많은 지역 탐색 휴리스틱이다. 선천적인 확률성 때문에 모의 담금질 해법에는 보장된 정확도가 없다. 반면에 구현하기 쉽고 계산 비용이 적으면서 최적에 가까운 해를 구하는 경향이 있다. 본 항에서, 엘리스의 문제를 위한 모의 담금질 해법을 제시한다. Wolsey (1998, \S\,12.3.2)가 제시한 정수 계획 모의 담금질 해법이 바탕이 된다.
\fi

\ifen
To generate an initial solution, our algorithm adopts the greedy heuristic of adding schools in descending order by $f_j t_j / g_j$ until the budget is exhausted. While Example \ref{greedyzeroellis} shows that the approximation coefficient of this solution can be arbitrarily small, in typical instances, it is much higher. Next, given a candidate solution $\mathcal{X}$, we generate a neighbor $\mathcal{X}_n$ by copying $\mathcal{X}$, adding schools to $\mathcal{X}_n$ until it becomes infeasible, then removing elements of $\mathcal{X}$ from $\mathcal{X}_n$ until feasibility is restored. If $v(\mathcal{X}_n) \geq v(\mathcal{X})$, then the candidate solution is updated to equal the neighbor; if not, then the candidate solution is updated with probability $\exp\bigl(v(\mathcal{X}_n) - v(\mathcal{X}) / T\bigr)$, where $T$ is a temperature parameter. This is repeated $N$ times, and the algorithm returns the best $\mathcal{X}$ observed.
\else
초기 해를 선정하기 위해, 예산이 고갈될 때까지 예산이 고갈될 때까지 $f_j t_j / g_j$-값이 감소하는 순서대로 학교를 추가하는 탐욕 휴리스틱을 도입한다. 예 \ref{greedyzeroellis}에서 보였듯이 탐욕 해의 근사 계수는 무한히 작아질 수 있지만, 전형적인 인스턴스에서는 좋은 초기 해이다. 그다음으로, 후보의 가능해 $\mathcal{X}$의 이웃 $\mathcal{X}_n$을 생성하기 위해 $\mathcal{X}$를 복사하고 불가능해질 때까지 학교를 추가하고, 가능성이 복구될 때까지 $\mathcal{X}$의 원소를 $\mathcal{X}_n$에서 제거한다. 만약 $v(\mathcal{X}_n) \geq v(\mathcal{X})$이면, 후보를 이웃으로 대체한다. 아니면, $\exp\bigl(v(\mathcal{X}_n) - v(\mathcal{X}) / T\bigr)$의 확률로 대체하며, 여기서 $T$는 ``온도 모수''라고 불린다. 이를 $N$번 반복한 다음에 가장 좋은 $\mathcal{X}$를 출력한다.
\fi

\ifen
\begin{algorithm}[h] 
\caption{Simulated annealing for Ellis's problem.} \label{ellissimann}
\KwIn{Utility values $t  (0, \infty)^m$, admissions probabilities $f \in (0, 1]^m$, application costs $g \in (0, \infty)^m$, budget $H \in (0, \infty)^m$.}{}
\KwParams{Initial temperature $T \geq 0$, temperature reduction parameter $r \in (0, 1]$, number of iterations $N$.}
Add schools to $\mathcal{X}$ in descending order by $f_j t_j / g_j$ until the budget is exhausted\;
\For{$i = 1 \dots N$}{
	$\mathcal{X}_n \gets \operatorname{copy}(\mathcal{X})$\;
	Add random schools from $\mathcal{C} \setminus \mathcal{X}$ to $\mathcal{X}_n$ until  $\mathcal{X}_n$ infeasible\;
	Remove random schools in $\mathcal{X}$ from $\mathcal{X}_n$ until feasibility restored\;
	$\Delta = v(\mathcal{X}_n) - v(\mathcal{X})$\;
	\lIfElse{$\Delta \geq 0$}{$\mathcal{X} \gets \mathcal{X}_n$}
	{$\mathcal{X} \gets \mathcal{X}_n$ with probability $\exp(\Delta / T)$}
	$T \gets rT$\;
}
\Return{the best $\mathcal{X}$ found}
\end{algorithm}
\else
\begin{algorithm}[h] 
\caption{엘리스의 문제를 위한 모의 담금질.} \label{ellissimann}
\KwIn{효용 모수 $t \in  (0, \infty)^m$, 합격 확률 $f \in (0, 1]^m$, 지원 비용 $g \in (0, \infty)^m$, 예산 $H \in (0, \infty)^m$.}
\KwParams{초기 온도 $T \geq 0$, 온도 절감 모수 $r \in (0, 1]$, 반복 단계 개수 $N$.}
예산이 고갈될 때까지 학교를 $f_j t_j / g_j$가 감소하는 순서대로 $\mathcal{X}$에 추가\;
\For{$i = 1 \dots N$}{
	$\mathcal{X}_n \gets \operatorname{copy}(\mathcal{X})$\;
	$\mathcal{X}_n$이 불가능해질 때까지 $\mathcal{C} \setminus \mathcal{X}$에서 학교를 무작위로 추가\;
	$\mathcal{X}_n$의 가능성이 복구될 때까지 $\mathcal{X}$에 속한 학교를 $\mathcal{X}_n$에서 제거\;
	$\Delta = v(\mathcal{X}_n) - v(\mathcal{X})$\;
	\lIfElse{$\Delta \geq 0$}{$\mathcal{X} \gets \mathcal{X}_n$}
	{$\exp(\Delta / T)$의 확률로 $\mathcal{X} \gets \mathcal{X}_n$}
	$T \gets rT$\;
}
\Return{가장 좋은 $\mathcal{X}$}
\end{algorithm}
\fi

\begin{proposition}
\ifen The computation time of Algorithm \ref{ellissimann} is $O(Nm)$. 
\else 알고리즘 \ref{ellissimann}의 계산 시간은 $O(Nm)$이다.\fi
\end{proposition}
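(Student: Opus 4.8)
The plan is to bound the cost of initialization and of one pass through the main loop separately, then multiply the per-iteration cost by the iteration count $N$. First I would observe that the only nontrivial primitive in the algorithm is the evaluation of the portfolio valuation $v(\mathcal{X})$, which enters each iteration through $\Delta$; everything else---copying $\mathcal{X}$, the membership tests, the acceptance test involving $\exp(\cdot)$, and the cooling step $T \gets rT$---is either constant-time or a single linear scan.

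The key step is to argue that $v(\mathcal{X})$ can be computed in $O(m)$ time. A naive reading of the closed form \eqref{closedformportfoliovaluationX} suggests cost $O(|\mathcal{X}|^2)$ because of the nested product, so I would show that one pass suffices. Store $\mathcal{X}$ as a Boolean membership array of length $m$ over the schools, which are pre-indexed so that $t_1 \leq \cdots \leq t_m$. Iterating $j$ from $m$ down to $1$ while maintaining the running product $\pi = \prod_{i \in \mathcal{X}:\, i > j}(1 - f_i)$, each summand $f_j t_j \pi$ in \eqref{closedformportfoliovaluationX} is accumulated in constant time and $\pi$ is updated by a single multiplication. Hence $v(\mathcal{X})$, and likewise $v(\mathcal{X}_n)$, is computed in $O(m)$ time.

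Next I would pin down the per-iteration cost. If $\mathcal{X}$ is stored as a membership array together with its running budget $\sum_{j \in \mathcal{X}} g_j$, then the neighbor construction is $O(m)$: copying is a linear scan; in the ``add until infeasible'' phase each school is inserted at most once, and (maintaining a list of currently available schools) each insertion is a constant-time update of the membership array and the running cost, for at most $m$ insertions; the ``remove until feasible'' phase likewise performs at most $m$ constant-time deletions. Computing $\Delta$ costs one valuation, $O(m)$, and the acceptance and cooling steps are $O(1)$. Thus each of the $N$ iterations costs $O(m)$, contributing $O(Nm)$ in total. The greedy initialization sorts the schools by $f_j t_j / g_j$ in $O(m \log m)$ time and then fills $\mathcal{X}$ in one scan; since a simulated-annealing run is only meaningful for $N = \Omega(\log m)$, this term is dominated by $O(Nm)$, giving the stated bound. (If one prefers not to assume this, the complexity can be stated as $O(Nm + m \log m)$.)

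The only real obstacle is the $O(m)$ valuation claim---ensuring the nested product does not force a quadratic scan---together with the bookkeeping that keeps the add and remove phases linear, namely maintaining the running budget and membership array rather than recomputing costs from scratch at each insertion or deletion. Once these are in place, the iteration count and the constant-time arithmetic of the acceptance rule make the remaining accounting immediate.
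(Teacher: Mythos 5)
Your proposal is correct, but there is nothing in the paper to compare it against: the paper states this proposition with no proof at all, treating the bound as immediate from the algorithm's structure. Your argument supplies exactly the details that make it true. The one genuinely nontrivial point---that $v(\mathcal{X})$ can be evaluated in $O(m)$ rather than $O(|\mathcal{X}|^2)$ time by a single descending pass over the (pre-sorted) indices that maintains the running product $\prod_{i \in \mathcal{X}:\, i>j}(1-f_i)$---is the right observation, and it is consistent with how the paper itself costs the closed form \eqref{closedformportfoliovaluationX} elsewhere (e.g.\ the $O(h)$ valuation cost claimed in Section~\ref{homogappcosts}). Your bookkeeping for the neighbor-generation phases (membership array plus running budget, so each random insertion and deletion is constant-time and each phase performs at most $m$ of them) is likewise sound. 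Your caveat about initialization is a genuine catch rather than pedantry: the greedy start requires an $O(m\log m)$ sort by $f_j t_j/g_j$, so the clean $O(Nm)$ bound as stated holds only if $N = \Omega(\log m)$ or if sorting time is excluded---the latter being consistent with the convention the paper adopts in its numerical experiments of not counting the time to sort. If anything, your analysis is more careful than the claim it certifies.
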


\ifen
\noindent In our numerical experiments, the output of Algorithm \ref{ellissimann} was typically quite close to the optimum.
\else
\noindent 계산 실험 결과에 따라, 알고리즘 \ref{ellissimann}의 출력은 보통 최적에 아주 가깝다.
\fi


\ifen \section{Numerical experiments} \else \section{계산 실험}\fi\label{numericalexperiments}
\ifen
In this section, we present the results of numerical experiments designed to test the practical efficacy of the algorithms derived above. 
\else
본 절에서 위에서 도출한 알고리즘의 현실성을 탐구하고자 계산 실험의 결과를 제시한다.
\fi

\ifen \subsection{Implementation notes} \else \subsection{알고리즘의 구현}\fi
\ifen
We chose to implement our algorithms in the Julia language (v1.8.0b1) because its system of parametric data types allows the compiler to optimize for differential cases such as when the $t_j$-values are integers or floating-point numbers. Julia also offers convenient macros for parallel computing, which enabled us to solve more and larger problems in the benchmark (Bezanson et al. 2017). We made extensive use of the DataStructures.jl package (v0.18.11, \url{github.com/JuliaCollections}). The code is available at \url{github.com/maxkapur/OptimalApplication}.
\else
모든 알고리즘을 쥴리아 (Julia, v1.8.0b1) 언어로 구현했다. 쥴리아는 $t_j$-값이 정수 또는 부동소수점 수와 같은 특수한 경우에 대해 컴파일러가 자동으로 최적화하는 모수적 자료형(parametric data type) 기능이 있어서 유용한 선택이었다. 또한 병렬 연산을 쉽게 관리할 수 있는 매크로를 제공하므로 계산 실험의 규모를 증가할 수 있었다 (Bezanson 외 2017). DataStructures.jl 패키지(v0.18.11, \url{github.com/JuliaCollections})도 활용했다. 실험 코드는  \url{github.com/maxkapur/OptimalApplication}에서 공유한다.
\fi

\ifen
The dynamic programs, namely Algorithms \ref{ellisDP1} and \ref{ellisDP3}, were implemented using recursive functions and dictionary memoization, as described in Cormen et al. (1990, \S\,16.6). Our implementation of Algorithm \ref{ellisDP3} also differed from that described in Subsection \ref{fptashead} in that we represented portfolio valuations in \emph{binary} rather than decimal, with the definitions of $P$ and $\mathcal{V}$ modified accordingly, and instead of fixed-point numbers, we worked in integers by multiplying each element of $\mathcal{V}$ by $2^P$. These modifications yield a substantial performance improvement without changing the fundamental algorithm design or complexity analysis.
\else
동적 계획 해법인 \ref{ellisDP1}과(와) 알고리즘 \ref{ellisDP3}을(를) 재귀 함수와 사전 기록으로 구현했다 (Cormen 외 1990, \S\,16.6). 알고리즘 \ref{ellisDP3}은(는) \ref{fptashead}항에서 설명한 것과 달리, 포트폴리오의 가치를 십진수 대신 이진수로 나타냈으며 $P$와  $\mathcal{V}$의 정의를 적절히 수정했다. 그리고 고정소수점 대신 $\mathcal{V}$의 모든 원소를 $2^P$으로 곱하여 정수로 변환했다. 이런 변경은 기본 알고리즘 설계와 계산 시간 분석에 영향을 주지 않는데 성능이 많이 개선된다.
\fi

\ifen \subsection{Experimental procedure} \else \subsection{실험 방법}\fi
\ifen
We conducted three numerical experiments. Experiments 1 and 2 evaluate the computation times of our algorithms for Alma's problem and Ellis's problem, respectively. Experiment 3 investigates the empirical accuracy of the simulated-annealing heuristic as compared to an exact algorithm. 
\else
총 3개의 수리 실험을 진행했다. 실험 1과 실험 2에서, 각각 알마의 문제와 엘리스의 문제를 위한 해법의 비교적 계산 시간을 판단하고자 한다. 실험 3에서, 정확한 해법에 비한 모의 담금질 휴리스틱의 현실적 정확도를 탐구한다.
\fi

\def\nmarkets{50}
\ifen
To generate synthetic markets, we drew the $t_j$-values independently from an exponential distribution with a scale parameter of ten and rounded up to the nearest integer. To achieve partial negative correlation between $t_j$ and $f_j$, we then set $f_j = 1 / (t_j + 10\,Q)$, where $Q$ is drawn uniformly from the interval $[0, 1)$. In Experiment 1, which concerns Alma's problem, we set $h = \lfloor m/ 2 \rfloor$. In Experiments 2 and 3, which concern Ellis's problem, each $g_j$ is drawn uniformly from the set $\{5, \dots, 10\}$ and we set $H = \lfloor \frac{1}{2} \sum g_j \rfloor$. This cost distribution resembles real-world college application fees, which are often multiples of \$10 between \$50 and \$100. A typical instance is shown in Figure \ref{samplemarket}.
\else
가산 시장을 생성하기 위해 $t_j$-값을 평균이 10인 지수 분포에서 독립적으로 관측하고 정수로 올림했다. 그다음에, $t_j$와 $f_j$가 서로 부분적으로 반비례하게 만들도록 $f_j = 1 / (t_j + 10\,Q)$로 정의했다. 단, $Q$는 $[0, 1)$에서 균일한 확률로 관측한다. 알마의 문제를 고려하는 실험 1에서, $h = \lfloor m/ 2 \rfloor$으로 처리했다. 엘리스의 문제를 고려하는 실험 2와 실험 3에서, 모든  $g_j$를 $\{5, \dots, 10\}$에서 균일한 확률로 관측했으며 $H = \lfloor \frac{1}{2} \sum g_j \rfloor$으로 처리했다. 실제 대학 지원 비용인 50과 100달러 사이에 10달러의 어떤 배수이므로 합리적인 비용 분포이다. 그림 \ref{samplemarket}에서 전형적인 인스턴스가 나타난다.
\fi

\ifen
The experimental variables in Experiments 1 and 2 were the market size $m$, the choice of algorithm, and (for Algorithm \ref{ellisDP3}) the tolerance $\varepsilon$. For each combination of the experimental variables, we generated \nmarkets~markets, and the optimal portfolio was computed three times, with the fastest of the three repetitions recorded as the computation time. We then report the mean and standard deviation across the \nmarkets~markets. Therefore, each cell of each table below represents a statistic over {\the\numexpr 3 * \nmarkets \relax}~computations. Where applicable, we do not count the time required to sort the entries of $t$. As the simulated-annealing heuristic's computation time is negligible, it was excluded from Experiment 2. 
\else
실험 1과 실험 2의 실험 변수로는 시장의 크기 $m$, 해법 선택, 그리고 (알고리즘 \ref{ellisDP3}인 경우) 허용치 $\varepsilon$을 고려했다. 실험 변수의 각 조합에 대해,  \nmarkets 개의 시장을 생성했으며 최적 포트폴리오를 3번 계산 한 것 중 가장 빠른 것으로 계산 시간으로 기록했다. \nmarkets 개의 시장에 대한 평균과 표준편차를 표에 나타낸다. 따라서 아래 표의 각 칸은 {\the\numexpr 3 * \nmarkets \relax}개의 계산에 대한 통계다. $t$를 배열하는 시간은 고려하지 않았다. 그리고 모의 담금질 휴리스틱의 계산 시간이 무시해도 될 정도이므로 실험 2에서 제외했다. 
\fi

\ifen
In Experiment 3, we generated 500 markets with sizes varying uniformly in the logarithm between $m = 2^3$ and $2^{11}$. For each market, we computed a heuristic solution using Algorithm \ref{ellissimann} and an exact solution using Algorithm \ref{ellisDP1} and calculated the optimality ratio. We chose to use $N = 500$ iterations in the simulated-annealing algorithm, and the temperature parameters $T = 1/4$ and $r = 1/16$ were selected by a preliminary grid search.  
\else
실험 3에서, 500개의 시장을 생성했으며, 크기는 대수가 균일하게 분포되도록 $m = 2^3$과 $2^{11}$ 사이에서 무작위로 선택했다. 각 시장에 알고리즘 \ref{ellissimann}을(를) 적용하여 휴리스틱한 해를 구하고 알고리즘 \ref{ellisDP1}을(를) 적용하여 정확한 해를 구한 다음의 최적성 비율을 계산했다. 모의 담금질의 반복 단계 개수는 $N = 500$이며, 격자 탐색을 이용한 예비 실험 결과에 따라 온도 모수를 $T = 1/4$ 그리고 $r = 1/16$로 선정했다.
\fi

\ifen \subsection{Summary of results} \else \subsection{결과 정리}\fi
\ifen
Experiment 1 compared the performance of Algorithm \ref{algorithmforlargeh} for homogeneous-cost markets of various sizes when the set of candidate schools $\mathcal{C}$ is stored as a list and as a binary max heap ordered by the $f_j \bar t_j$-values. The results appear in Table \ref{experiment1results}. Our results indicate that the list implementation is faster. 
\else
실험 1에서, 동일한 지원 비용으로 정의된 크기가 다양한 시장에 대해 알고리즘 \ref{algorithmforlargeh}의 성능을 고려했다. 이를 $\mathcal{C}$를 목록 혹은 $f_j \bar t_j$-값에 따라 배열된 힙 구현으로 구성하는 경우로 나눴으며 실험 결과는 표 \ref{experiment1results}에서 등장한다. 실험 결과에 따라 목록 구현이 더 빠른 해법이었다. 
\fi


\ifen
In Experiment 2, we turned to the general problem, and compared the performance of the exact algorithms (Algorithms \ref{ellisbnb} and \ref{ellisDP1}) and the approximation scheme (Algorithm \ref{ellisDP3}) at tolerances 0.5 and 0.05.  The results, which appear in Table \ref{experiment2results}, broadly agree with the time complexity analyses presented above. The branch-and-bound algorithm proved impractical for even medium-sized instances. Overall, we found the exact dynamic program to be the fastest algorithm, while the FPTAS was rather slow, a result that echoes the results of computational studies on knapsack problems (Martello and Toth 1990, \S\,2.10). The strong performance of Algorithm \ref{ellisDP1} is partly attributable to the structure of our synthetic instances, in which application costs are small integers and $H$ is proportional in expectation to $m$, meaning the expected computation time is $O(m^2)$. However, the relative advantage of Algorithm \ref{ellisDP1} may be even more pronounced in real college-application instances because the typical student's application budget accommodates at most a dozen or so schools and is constant in $m$. 
\else
실험 2에서 일반적인 문제를 고려한다. 정확한 해법 (알고리즘 \ref{ellisbnb} 및 알고리즘 \ref{ellisDP1}) 그리고 허용치를 0.5와 0.05로 설정한 근사 해법 (알고리즘 \ref{ellisDP3})의 성능을 비교한다. 실험 결과는 표 \ref{experiment1results}에서 나타나며 위에서 도출한 시간 복잡성 결과와 대략 같다. 정확한 동적계획 해법이 전책적으로 가장 빠르며 FPTAS가 약간 느린 것으로 나온 결과는 배낭 문제 해법을 비교한 기존 연구와 비슷하다 (Martello와 Toth 1990, \S\,2.10). 분지한계법은 크기가 중간인 인스턴스에도 비효율적이었다. 알고리즘 \ref{ellisDP1}의 좋은 실험적 성능은 부분적으로 가상 인스턴스의 구조 덕이다. 가상 인스턴스의 지원 비용이 작은 정수이고 $H$가 $m$과 선형식으로 비례하는 것은 알고리즘의 기대 계산 시간이 $O(m^2)$임을 의미한다. 그러나 전형적인 학생의 지원 예산은 최대한 열 몇 개의 학교에 지원할 수 있게 하며 $m$에 대해 상수이므로, 실제 대학 지원 문제 인스턴스에 적용하면 알고리즘 \ref{ellisDP1}이 더욱 뚜렷한 상대적 우위를 발휘할 가능성이 있다. 
\fi

\ifen
The results of Experiment 3, which evaluated the performance of the simulated-annealing heuristic in our synthetic instances, are plotted in Figure \ref{experiment3results}. In the every instance, the heuristic found a solution within 10 percent of optimality, and in the vast majority of instances, it was within 2 percent of optimality. The heuristic performs most poorly in small markets of size $m \leq 16$, but as exact algorithms are tractable at this scale, this result presents no cause for concern.
\else
실험 3은 가상 인스턴스를 통해 모의 담금질 휴리스틱의 현실 성능을 탐구했으며, 실험 결과는 그림 \ref{experiment3results}에 도시하였다. 모든 인스턴스에서 최적의 10\% 이내에 해를 구했으며, 대다수의 경우는 2\% 이내로 달했다. 휴리스틱의 성능은 $m \leq 16$이 되는 작은 시장에서 가장 낮았으며, 정확한 해법도 효율적으로 풀 수 있는 시장 크기이므로 염려할 이유가 없어 보인다.
\fi

\newcommand{\lastptofcaption}{\ifen
For each value of $m$, \nmarkets~markets were generated, and the computation time was recorded as fastest of three repetitions of the algorithm. The table shows the average time (standard deviation) over the \nmarkets~instances.
\else 각 $m$에 대해 \nmarkets 개의 시장을 생성했으며 알고리즘을 3번 반복하여 그중 최소 계산 시간을 기록했다. 표에서 \nmarkets 개의 인스턴스에 대한 평균 (표준편차) 시간이 나타난다.\fi}

\begin{figure}[h!] 
\centering
\includegraphics[width=0.85\textwidth]{./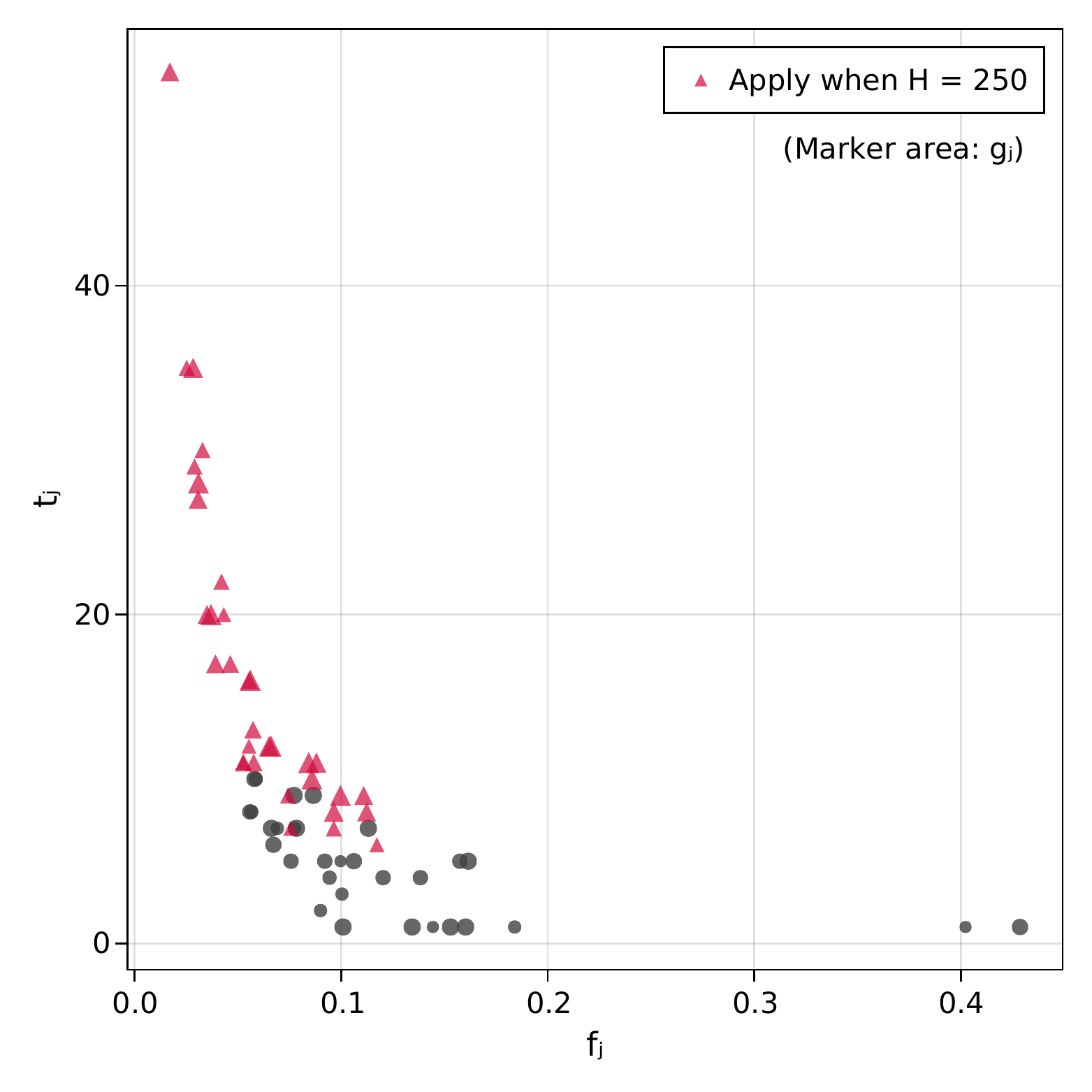}
  \caption{\label{samplemarket}
 \ifen A typical randomly-generated instance with $m=64$ schools and its optimal application portfolio. The application costs $g_j$ were drawn uniformly from $\{5, \dots, 10\}$. The optimal portfolio was computed using Algorithm \ref{ellisDP1}.
 \else $m=64$개의 학교로 구성된 전형적인 무작위로 생성한 인스턴스와 해당 최적 포트폴리오. 지원 비용 $g_j$는 $\{5, \dots, 10\}$에서 균일한 확률로 선정했으며 기호의 넓이와 비례된다. 최적 포트폴리오는 알고리즘 \ref{ellisDP1}(으)로 계산했다. \fi}
\end{figure}

\begin{table}[h!] \centering
\small
\begin{tabular}{r|r@{~}r|r@{~}r}
\ifen 
 \textbf{\begin{tabular}[r]{@{}r@{}}Number of\\schools $m$\end{tabular}}& \multicolumn{2}{c|}{\textbf{\begin{tabular}[c]{@{}c@{}}Algorithm \ref{algorithmforlargeh}\\with list\end{tabular}}}  & \multicolumn{2}{c}{\textbf{\begin{tabular}[c]{@{}c@{}}Algorithm \ref{algorithmforlargeh}\\with heap\end{tabular}}}\\ \hline
 \else
 \textbf{\begin{tabular}[r]{@{}r@{}}학교의\\개수 $m$\end{tabular}}& \multicolumn{2}{c|}{\textbf{\begin{tabular}[c]{@{}c@{}}알고리즘 \ref{algorithmforlargeh}\\(목록 구현)\end{tabular}}}  & \multicolumn{2}{c}{\textbf{\begin{tabular}[c]{@{}c@{}}알고리즘 \ref{algorithmforlargeh}\\(힙 구현)\end{tabular}}}\\ \hline
 \fi
    16 &    0.00 &  (0.00) &    0.01 &    (0.00) \\
    64 &    0.01 &  (0.00) &    0.08 &    (0.02) \\
   256 &    0.06 &  (0.00) &    0.96 &    (0.26) \\
  1024 &    0.82 &  (0.03) &   14.27 &    (2.28) \\
  4096 &   12.87 &  (0.71) &  230.77 &   (18.75) \\
 16384 &  199.48 &  (1.77) & 3999.19 &  (283.48)
\end{tabular}
\caption{\label{experiment1results} \normalsize
\ifen (Experiment 1.) Time in ms to compute an optimal portfolio for an admissions market with homogeneous application costs using Algorithm \ref{algorithmforlargeh} when $\mathcal{C}$ is stored as a list and as a heap. \lastptofcaption~In every case, $h = m/2$. 
\else (실험 1.) 단위: ms. 알고리즘 \ref{algorithmforlargeh}에서 $\mathcal{C}$를 목록 또는 힙 구현으로 저장했을 때, 
동일 지원 비용 입학 시장의 최적 지원 포트폴리오를 계산하는 시간. \lastptofcaption~모든 경우, $h = m/2$. \fi}
\end{table}

\begin{table}[h!] \centering
\small
\begin{tabular}{r|r@{~}r|r@{~}r|r@{~}r|r@{~}r}
\ifen
\textbf{\begin{tabular}[r]{@{}r@{}}Number of\\schools $m$\end{tabular}}&\multicolumn{2}{c|}{\textbf{\begin{tabular}[c]{@{}c@{}}Algorithm \ref{ellisbnb}:\\Branch \& bound\end{tabular}}}  & \multicolumn{2}{c|}{\textbf{\begin{tabular}[c]{@{}c@{}}Algorithm \ref{ellisDP1}:\\Costs DP\end{tabular}}}  &\multicolumn{2}{c|}{\textbf{\begin{tabular}[c]{@{}c@{}}Algorithm \ref{ellisDP3}:\\FPTAS, $\varepsilon= 0.5$\end{tabular}}}  & \multicolumn{2}{c}{\textbf{\begin{tabular}[c]{@{}c@{}}Algorithm \ref{ellisDP3}:\\FPTAS, $\varepsilon= 0.05$\end{tabular}}}   \\ \hline
\else
\textbf{\begin{tabular}[r]{@{}r@{}}학교의\\개수 $m$\end{tabular}}&\multicolumn{2}{c|}{\textbf{\begin{tabular}[c]{@{}c@{}}알고리즘  \ref{ellisbnb}:\\분지한계법\end{tabular}}}  & \multicolumn{2}{c|}{\textbf{\begin{tabular}[c]{@{}c@{}}알고리즘 \ref{ellisDP1}:\\지출액 동적 계획\end{tabular}}}  &\multicolumn{2}{c|}{\textbf{\begin{tabular}[c]{@{}c@{}}알고리즘 \ref{ellisDP3}:\\FPTAS, $\varepsilon= 0.5$\end{tabular}}}  & \multicolumn{2}{c}{\textbf{\begin{tabular}[c]{@{}c@{}}알고리즘 \ref{ellisDP3}:\\FPTAS, $\varepsilon= 0.05$\end{tabular}}}   \\ \hline
\fi
   8 &  0.02 & (0.01) &  0.01 & (0.00) &   0.05 &   (0.01) &     0.16 &    (0.04) \\
  16 &  0.10 & (0.04) &  0.07 & (0.02) &   0.39 &   (0.10) &     2.59 &    (0.62) \\
  32 & 12.43 & (9.88) &  0.29 & (0.05) &   2.15 &   (0.29) &    33.07 &   (10.72) \\
  64 &     — &    (—) &  1.24 & (0.16) &  13.24 &   (2.22) &   339.64 &  (100.38) \\
 128 &     — &    (—) &  6.20 & (0.64) &  79.92 &  (20.18) &  2042.63 &  (749.71) \\
 256 &     — &    (—) & 30.63 & (2.28) & 818.50 & (632.98) & 18949.50 & (3533.88)
\end{tabular}%
\caption{\label{experiment2results} \normalsize
\ifen (Experiment 2.) Time in ms to compute an optimal or $(1 - \varepsilon)$-optimal portfolio for an admissions market with heterogeneous application costs using the three algorithms developed in Section \ref{hetappcosts}.
The branch-and-bound algorithm is impractical for large markets. \lastptofcaption
\else  
(실험 2.) 단위: ms. \ref{hetappcosts}절에서 도출한 3개의 알고리즘을 사용할 때, 다양한 지원 비용으로 갖춘 입학 시장의 최적 또는 $(1- \varepsilon)$-최적 포트폴리오를 계산하는 시간. 분지한계법은 큰 시장에서 비실용적이다. \lastptofcaption \fi}
\end{table}

\begin{figure}[h!] 
\centering
\includegraphics[width=0.95\textwidth]{./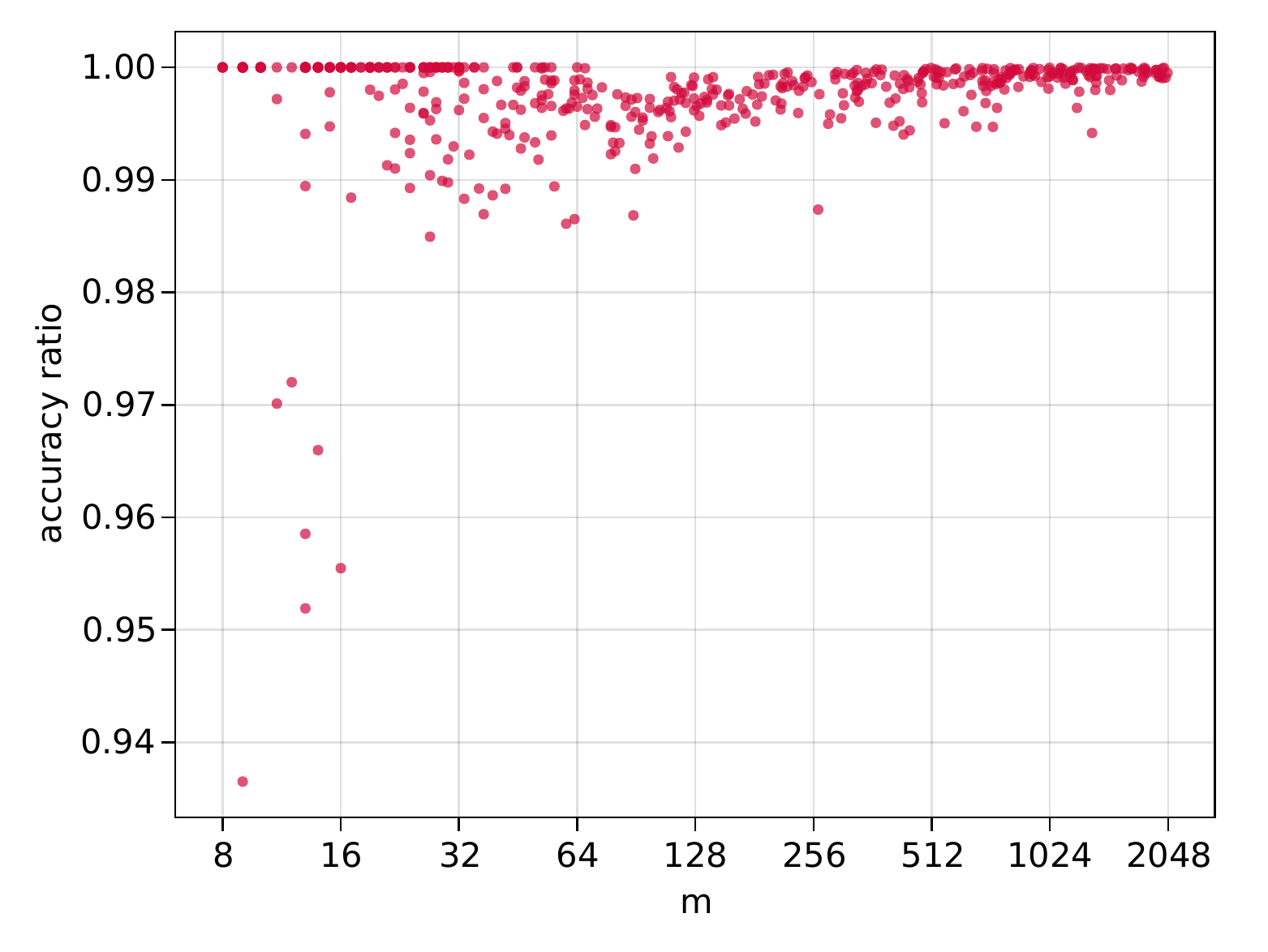}
  \caption{\label{experiment3results}
 \ifen (Experiment 3.) Optimality ratio achieved by simulated annealing (Algorithm \ref{ellissimann}) in markets of varying size, with parameters $N = 500$, $T = 1/4$, and $r = 1/16$. Optimal portfolios were computed using Algorithm \ref{ellisDP1}. 
 \else  (실험 3.) 크기가 다양한 시장에 적용한 모의 담금질 (알고리즘 \ref{ellissimann}) 해법이 달성하는 최적성 비율. 모의 담금질의 모수는 $N = 500, T = 1/4, r = 1/16$이며, 최적 포트폴리오는 알고리즘 \ref{ellisDP1}(으)로 계산했다. \fi}
\end{figure}

\ifen \section{Conclusion and ideas for future research} \else \section{결론과 향후 연구 방향}\fi\label{conclusion}
\ifen
This study has introduced a novel combinatorial optimization problem that we call the college application problem. It can be viewed as a kind of asset allocation or knapsack problem with a submodular objective. We showed that the special case in which colleges have identical application costs can be solved in polynomial time by a greedy algorithm because the optimal solutions are nested in the budget constraint. The general problem is NP-complete. We provided four solution algorithms. The strongest from a theoretical standpoint is an FPTAS that produces a $(1-\varepsilon)$-approximate solution in $O(m^3 / \varepsilon)$-time. On the other hand, for typical college-application instances, the dynamic program based on application expenditures is both easier to implement and substantially more time efficient. A heuristic simulated-annealing algorithm also exhibited strong empirical performance in our computational study. The algorithms discussed in this paper are summarized in Table \ref{algorithmssummary}.
\else
본 연구는 새로운 조합 최적화 문제를 제안했으며 이를 대학 지원 최적화 문제라고 부른다. Submodular한 목적함수를 가지는 자산 배분 혹은 배낭 문제로 볼 수 있다. 모든 대학의 지원 비용이 동일한 특수한 경우, 탐욕 알고리즘으로 다항 시간 안에 풀 수 있음을 보였으며 최적해가 예산 제약식에 대한 포함 사슬 관계를 가지기 때문이다. 일반적인 문제는 NP-complete하다. 4가지 해법을 제시했으며 이론적으로 가장 효율적인 알고리즘은 $O(m^3 / \varepsilon)$-시간 안에  $(1-\varepsilon)$-근사해를 계산하는 FPTAS이다. 그 반면에 전형적인 대학 지원 문제 인스턴스에 대해, 지원 비용 지출액 기반 동적 계획 해법은 실행하기 쉬울뿐더러 계산 시간이 상당히 짧다. 계산 실험에서 모의 담금질 기반 휴리스틱 해법도 좋은 성능을 발휘했다. 표 \ref{algorithmssummary}에서 본 논문이 다운 알고리즘을 정리한다.
\fi

\ifen
\begin{table}[h!] \centering
\small
\begin{tabular}{r|lllll}
\textbf{Algorithm} & \textbf{Reference}  & \textbf{Problem} & \textbf{Restrictions} & \textbf{Exactness}       & \textbf{Computation time} \\ \hline
\xrowht[()]{1.5em}  \begin{tabular}[r]{@{}r@{}}Na\"ive\end{tabular} & Definition \ref{naivealgorithm}                   & \begin{tabular}[l]{@{}l@{}}Homogeneous \\ costs\end{tabular}     & None                  & $(1/h)$-opt.               & $O(m)$                    \\ 
\xrowht[()]{1.5em}  Greedy                & Algorithm \ref{algorithmforlargeh} &  \begin{tabular}[l]{@{}l@{}}Homogeneous \\ costs\end{tabular}    & None                  & Exact                    & $O(hm)$                   \\
\xrowht[()]{1.5em}  \begin{tabular}[r]{@{}r@{}}Branch and\\ bound\end{tabular}   & Algorithm \ref{ellisbnb} & General          & None                  & Exact                    & $O(2^m)$                  \\
\xrowht[()]{1.5em}  Costs DP     &  Algorithm \ref{ellisDP1}& General          & $g_j$ integer         & Exact                    & $O(Hm + m \log m)$        \\
\xrowht[()]{1.5em}  FPTAS      &    Algorithm \ref{ellisDP3} & General          &  None         & $(1 - \varepsilon)$-opt. & $O(m^3 / \varepsilon)$   \\
\xrowht[()]{1.5em}  \begin{tabular}[r]{@{}r@{}}Simulated\\annealing\end{tabular}        &    Algorithm \ref{ellissimann} & General          &  None         & $0$-opt. & $O(Nm)$   
\end{tabular}%
\caption{\label{algorithmssummary} \normalsize
Summary of algorithms discussed in this paper.}
\end{table}
\else
\begin{table}[h!] \centering
\small
\begin{tabular}{r|lllll}
\textbf{알고리즘} & \textbf{논문 내 위치}  & \textbf{문제} & \textbf{제한} & \textbf{정확도}       & \textbf{계산 시간} \\ \hline
\xrowht[()]{1.7em}  \begin{tabular}[r]{@{}r@{}}나이브\end{tabular} & 정의 \ref{naivealgorithm}                   & \begin{tabular}[l]{@{}l@{}}동일한\\ 지원 비용\end{tabular}     & 없음                  & $(1/h)$-근사          & $O(m)$                    \\ 
\xrowht[()]{1.7em}  탐욕 해법                & 알고리즘 \ref{algorithmforlargeh} &  \begin{tabular}[l]{@{}l@{}}동일한\\ 지원 비용\end{tabular}    & 없음                  & 정확                    & $O(hm)$                   \\
\xrowht[()]{1.7em}  \begin{tabular}[r]{@{}r@{}}분지한계법\end{tabular}   & 알고리즘 \ref{ellisbnb} & 일반 문제          & 없음                  & 정확                    & $O(2^m)$                  \\
\xrowht[()]{1.7em}   \begin{tabular}[r]{@{}r@{}}지출액\\ 동적 계획\end{tabular}     &  알고리즘 \ref{ellisDP1}& 일반 문제          & $g_j$ 정수         & 정확                    & $O(Hm + m \log m)$        \\
\xrowht[()]{1.7em}  FPTAS      &    알고리즘 \ref{ellisDP3} & 일반 문제          &  없음    & $(1 - \varepsilon)$-근사 & $O(m^3 / \varepsilon)$   \\
\xrowht[()]{1.7em}  모의 담금질      &    알고리즘  \ref{ellissimann} & 일반 문제          &  없음    & $0$-근사 & $O(Nm)$   
\end{tabular}%
\caption{\label{algorithmssummary} \normalsize
본 논문에서 다룬 해법의 요약.}
\end{table}
\fi

\ifen 
Three extensions of this problem appear amenable to future study.
\else
이 문제를 확장할 수 있는 향후 연구 방향 3가지를 제안할 수 있다. 
\fi

\ifen \subsection{Explicit treatment of risk aversion}\else \subsection{위험 회피를 모수화한 모형}\fi
\ifen
The familiar Markowitz portfolio optimization model includes an explicit risk-aversion term, whereas our model has settled for an implicit treatment of risk, namely the tradeoff between selective schools and safety schools inherent in the maximax objective function. However, it is possible to augment the objective function $v(\mathcal{X}) = \operatorname{E}[X]$ to incorporate a variance penalty $\beta \geq 0$ as follows:
\else
익숙한 Markowitz 포트폴리오 최적화 모형에서는 명시적인 위험 회피 항이 등장하며, 본 모형은 maximax 목적함수에 내포된 경쟁적인 학교와 안정 지원 학교 사이의 균형에 의한 위험 관리 요소만 고려했다. 그러나 기본 목적함수인 $v(\mathcal{X}) = \operatorname{E}[X]$에 표준편차 페널티 $\beta \geq 0$를 다음처럼 도입할 수 있다:
\fi
\begin{align}
\begin{split}
v_{\mathrm{\beta}}(\mathcal{X}) &=  \operatorname{E}[X] - \beta \operatorname{Var}(X) \\
&=   \operatorname{E}[X] - \beta \left(  \operatorname{E}[X^2]  -  \operatorname{E}[X]^2 \right) \\
& = \sum_{j\in\{0\}\cup\mathcal{X}} \Bigl( f_j t_j \prod_{\substack{i \in \mathcal{￣X}: \\ i > j}} (1 - f_{i}) \Bigr)
 - \beta \sum_{j\in\{0\}\cup\mathcal{X}} \Bigl( f_j t_j^2 \prod_{\substack{i \in \mathcal{X}: \\ i > j}} (1 - f_{i}) \Bigr)
  + \beta v(\mathcal{X})^2 \\
  &= v(\mathcal{X}; \tau) +  \beta v(\mathcal{X}; t)^2
\end{split}
\end{align}
\ifen
where $\tau_j = t_j - \beta t_j^2$. Since the first term is itself a portfolio valuation function, and the second is a monotonic transformation of one, we speculate that one of the algorithms given above could be used as a subroutine to trace out the efficient frontier by maximizing $v(\mathcal{X}; t)$ subject to the budget constraint and $v(\mathcal{X}; \tau) \geq \alpha$ for various values of $\alpha \geq 0$. 
\else
단, $\tau_j = t_j - \beta t_j^2$. 이의 첫 번째 항 그 자체가 포트폴리오 가치 함수이며, 두 번째 항은 포트폴리오 가치 함수의 단조 변환이다. 따라서 위에서 제시한 알고리즘을 서브루틴으로 사용하면 효율적 투자선(efficient frontier)을 탐색할 수 있어 보인다. 이때 다양한 $\alpha \geq 0$에 대해 예산 제약식과 $v(\mathcal{X}; \tau) \geq \alpha$ 하에서 $v(\mathcal{X}; t)$을 최대화하는 것이 알고리즘의 요점이다.
\fi

\ifen
A parametric treatment of risk aversion may align our model more closely to real-world applicant behavior. Conventional wisdom asserts that the best college application strategy combines reach, target, and safety schools in roughly equal proportion (Jeon 2015). When the application budget is small relative to the size of the admissions market, our algorithms recommend a similar approach (see the example of Subsection \ref{planetsexamplesection}). However, inspecting equation \eqref{howtotransformtj}, which discounts school utility parameters to reflect their marginal value relative to the schools already in the portfolio, reveals that low-utility schools are penalized more harshly than high utility schools in the marginal analysis. Consequentially, as the application budget grows, the optimal portfolio in our model tends to favor reach schools over safety schools. (See Figure \ref{samplemarket} for a clear illustration of this phenomenon.) 
 
A potential application of our model is an econometric study that estimates students’ perceptions of college quality ($t_j$) from their observed application behavior ($\mathcal{X}$) under the assumption of utility maximization. Empirical research indicates that heterogeneous risk aversion is a significant source of variation in human decision-making, especially in the domains of educational choice and employment search (Kahneman 2011; Hartlaub and Schneider 2012; van Huizen and Alessie 2019). Therefore, an endogenous risk-aversion parameter would greatly enhance our model’s explanatory power in the econometric setting.
\else
위험 회피를 모수화한 모형은 실제 지원 행동을 더 정확하게 반영할 수 있다. ``상향·소신·안정'' 지원 학교를 대략 균일하게 분산하여 지원하는 것이 최적이란 통념이다 (전민희 2015). 입학 시장의 크기에 비해 지원 예산이 작은 경우, 본 연구에서 제시하는 알고리즘은 비슷한 전략을 추천한다 (항 \ref{planetsexamplesection}의 예제 참고). 그러나 포트폴리오에 이미 삽입한 학교를 반영한 한계 가치를 나타내도록 효용 모수를 할인하는 수식 \eqref{howtotransformtj}을 살펴보면, 효용이 높은 학교보다 효용이 낮은 학교에 부과되는 페널티가 큰 것을 알 수 있다. 그 결과는 지원 예산이 커질수록 본 모형의 최적 포트폴리오가 안정 지원 학교보다 상향 지원 학교를 편드는 것이다. (그림 \ref{samplemarket}에서 뚜렷이 보이는 현상이다.)

학생의 지원 행동($\mathcal{X}$)을 관찰하고 학생이 인식하는 대학교의 질($t_j$)을 추정하는 경제 측정학 연구에서 본 논문의 모형을 그대로 응용할 수 있다. 실무 연구 결과에 따르면 위험 회피의 다양성은 인간 의사 결정 다양성의 주된 원인이 되며, 교육 선택과 취업 활동은 특히 그렇다 (Kahneman 2011; Hartlaub와 Schneider 2012; van Huizen과 Alessie 2019). 따라서 경제 측정학 맥락에서, 위험 회피 모수를 내생 변수로 도입하면 원래 모형의 묘사력을 많이 개선할 수 있다.
\fi

\ifen \subsection{Signaling strategies}\else \subsection{시그런 전략}\fi
\ifen
Another direction of further research with immediate application in college admissions is to incorporate additional signaling strategies into the problem. In the Korean admissions process, the online application form contains three multiple-choice fields, labeled \emph{ga, na,} and \emph{da} for the first three letters of the Korean alphabet, which students use to indicate the colleges to which they wish to apply. Most schools appear only in one or two of the three fields. Therefore, students are restricted not only in the number of applications they can submit, but in the combinations of schools that may coincide in a feasible application portfolio. From a portfolio optimization perspective, this is a \emph{diversification constraint,} because its primary purpose is to prevent students from applying only to top-tier schools. However, in addition to overall selectivity, the three fields also indicate different evaluation schemes: The \emph{ga} and \emph{na} customarily indicate the applicant's first and second choice, and applications filed under these fields are evaluated with greater emphasis on standardized-test scores than those filed under the \emph{da} field. Some colleges appear in multiple fields and run two parallel admissions processes, each with different evaluation criteria. Therefore, if a student recognizes that she has a comparative advantage in (for example) her interview skills, she may increase her chances of admission to a competitive school by applying under the \emph{da} field. The optimal application strategy in this context can be quite subtle, and it is a subject of perennial debate on Korean social networks.

An analogous feature of the American college admissions process is known as early decision, in which at the moment of application, a student commits to enrolling in a college if admitted. In principle, students who apply with early decision may obtain a better chance of admission by signaling their eagerness to attend, but doing so weakens the college's incentive to entice the student with discretionary financial aid, altering the utility calculus.
\else
향후 연구에 두 번째 방향은 모형에 시그널 전략을 도입하는 것이다. 한국 정시 입학 과정의 온라인 지원 양식에는 가군, 나군, 다군이란 3가지 선다형 칸을 채워서 지원하는 학교를 선택한다. 대부분의 학교는 모든 3개의 칸에 등장하지 않고 그중 1~2개만 가능하다. 따라서 학생이 지원할 수 있는 학교의 `수'일 뿐만 아니라 가능한 지원 포트폴리오에 같이 들어갈 수 있는 학교의 `조합'도 제한된다. 이는 학생이 상위권 대학만 지원하지 않도록 만들어지기 때문에 다각화(diversification) 제약식으로 볼 수 있다. 그러나 대학이 어떤 군에 속하는 것은 대학 순위일뿐더러 입학 전형 유형도 의미한다. 가군과 나군은 관례적으로 학생이 선호하는 대학을 선택하는 것이며 다군보다 수능 점수의 반영 비율이 높은 편이다. 또한 군 2개에 포함되는 학교도 있으며 입학 전형이 다르면서 평행한 모집 과정 2개를 운영한다. 가령, 어떤 학생이 면접 능력에 비교 우위가 있으면 경쟁적인 학교에 다군으로 지원하면 합격 확률을 높일 수도 있다. 이런 맥락에서 최적의 지원 전략은 미묘한 문제이며 한국 SNS에서 자주 발생하는 화젯거리이다.

미국 입학 과정의 유사한 특징은 조기 전형(early decision)이라고 불린다. 이는 학생이 합격하면 진학한다고 약속하고 지원하는 것을 의미한다. 조기 전형으로 지원하면 입학할 관심을 시그널할 수 있으므로 합격 확률을 높일 수 있지만, 학교가 학생을 모집하려고 장학금을 수여할 동기가 약화할 수 있으므로 학생의 효용 추정이 달라진다.
\fi

\ifen
In the integer formulation of the college application problem (Problem \ref{integernlp}), a natural way to model these signaling strategies without introducing too much complexity is to split each $c_j$ into $c_{j+}$ and $c_{j-}$. The binary decision variables are $x_{j+} = 1$ if the student applies to $c_j$ with high priority (that is, in  the \emph{ga} or \emph{na} field or with early decision) and $x_{j-} = 1$ if applying with low priority (that is, in the \emph{da} field or without early decision). Now, assuming that the differential admission probabilities $f_{j+}$ and $f_{j-}$ and utilities $t_{j+}$ and $t_{j-}$ are known, adding the logical constraints 
\begin{align}
 x_{j+} + x_{j-} \leq 1, ~~ j = 1\dots m\qquad\text{and}\qquad\sum_{j=1}^m x_{j+} \leq 1
\end{align}
completes the formulation. These are knapsack constraints; therefore, the submodular maximization algorithm of Kulik et al. (2013) can be used to obtain a $(1 - 1/e - \varepsilon)$-approximate solution for this problem. When the budget constraint is a cardinality constraint (as in Alma's problem), we note that adding these constraints yields a matroidal solution set, and Calinescu et al. (2011)'s algorithm yields an alternative solution, which has the same approximation coefficient of $1 - 1/e -\varepsilon$. 
\else
대학 지원 문제의 정수 모형(문제 \ref{integernlp})에서 위의 시그널 전략들을 어렵지 않게 도입하는 방식은 각 $c_j$를 $c_{j+}$와 $c_{j-}$로 나누는 것이다. 그다음에 학생이 $c_j$에 높은 관심을 시그널하면 (즉, 가군으로 선택하거나 조기 전형으로 지원하면) $x_{j+} = 1$이며 관심을 시그널 하지 않고 지원하면 (즉 가군 혹은 나군으로 지원하거나 조기 전형 없이 지원하면) $x_{j-} = 1$이 되는 이진 결정 변수를 정의한다. 해당 합격 확률 $f_{j+}$와 $f_{j-}$ 그리고 효용  $t_{j+}$와 $t_{j-}$가 알려져 있으면, 다음 같은 논리 제약식을 더하면 모형을 완성할 수 있다. 
\begin{align}
 x_{j+} + x_{j-} \leq 1, ~~ j = 1\dots m\qquad\text{그리고}\qquad\sum_{j=1}^m x_{j+} \leq 1
\end{align}
모두 배낭 제약식이므로 Kulik 외 (2013)의 submodular 최대화 알고리즘을 통해 이 문제의 $(1 - 1/e - \varepsilon)$-근사해를 구할 수 있다. 또한 (알마의 문제처럼) 예산 제약식이 집합 크기 제약식이 되면, 논리 제약식을 더한 모형의 해집합이 matroid가 되며, Calinescu 외 (2011)의 알고리즘이 대안 해법이 된다. 단, 그의 근사 계수는 여전히 $1 - 1/e - \varepsilon$이다. 
\fi

\ifen \subsection{Memory-efficient dynamic programs}\else \subsection{동적 계획의 메모리 소요 절감}\fi
\ifen
Our numerical experiments suggest that the performance of the dynamic programming algorithms is bottlenecked by not computation time, but memory usage. Reducing these algorithms' storage requirements would enable us to solve considerably larger problems.

Abstractly speaking, Algorithms \ref{ellisDP1} and \ref{ellisDP3} are two-dimensional dynamic programs that represent the optimal solution by $Z[N, C]$. Here $N$ is the number of decision variables, $C$ is a constraint parameter, and $Z[n, c]$ is the optimal objective value achievable using the first $n\leq N$ decision variables when the constraint parameter is $c \leq C$. The algorithm iterates on a recursion relation that expresses $Z[n, c]$ as a function of $Z[n -1, c]$ and $Z[n -1, c']$ for some $c' \leq c$. (In Algorithm \ref{ellisDP1}, $Z$ is the maximal portfolio valuation, $N = m$, and $C =H$; in Algorithm \ref{ellisDP3},  $Z$ is the minimal application expenditures, $N = m$, and $C = |\mathcal{V}| \propto m^2 / \varepsilon$.)

When such a dynamic program is implemented using a lookup table or dictionary memoization, producing the optimal solution requires $O(NC)$-time and -space. Kellerer et al. (2004, \S\,3.3) provide a technique for transforming the dynamic program $Z$ into a divide-and-conquer algorithm that produces the optimal solution in $O(N C)$-time and $O(N + C)$-space, a significant improvement. However, their technique requires the objective function to be additively separable in a certain sense that appears difficult to conform to the college application problem. 
\else
수리 실험 결과에 따르면 동적 계획 해법 성능의 병목 요소는 계산 시간이 아니라 메모리 소모량이다. 이 알고리즘의 메모리 소요를 절감하면 상당히 큰 문제를 풀 수 있다.

추상적인 관점에서, 알고리즘 \ref{ellisDP1}과(와) 알고리즘 \ref{ellisDP3}은(는) 최적해를 $Z[N, C]$로 표현하는 2차원 동적 계획이다. 이때 $N$은 결정 변수의 수, $C$는 제약식의 모수, 그리고 $Z[n, c]$는 제약 모수가 $c \leq C$일 때 첫 $n\leq N$개의 결정 변수만 이용하는 최적 목적 함숫값이다. 알고리즘의 구조는 $Z[n, c]$를 $Z[n -1, c]$와 어떤 $c' \leq c$에 대응하는 $Z[n -1, c']$의 함수로 표현하고 그 반복 관계로 탐색하는 것이다. (알고리즘 \ref{ellisDP1}에서 $Z$는 최대 포트폴리오 가치, $N = m$, 그리고 $C =H$이며, 알고리즘 \ref{ellisDP3}에서 $Z$는 최소한 지원 지출액, $N = m$, 그리고 $C= |\mathcal{V}| \propto m^2 / \varepsilon$이다.)

이러한 형태의 동적 계획을 표 혹은 사전 기록으로 구현하면 최적해를 출력하는 것은 $O(NC)$-시간과 $O(NC)$-공간을 소요한다. Kellerer 외 (2004, \S\,3.3)는 동적 계획 $Z$를 $O(N C)$-시간과 $O(N + C)$-공간 안에 최적해를 구하는 분할 정복(divide and conquer) 알고리즘으로 변환하는 일반적인 방법을 제시하며 매우 유익하다. 그러나 이를 이용할 수 있는 조건 중, 목적함수는 어떤 기술적인 의미에서 가산적으로 분할되어야 하며 이를 대학 지원 문제에 적용하기 어렵다.
\fi

\appendix
\ifen \section{Appendix} \else \section{부록} \fi

\ifen \subsection{Elementary proof of Theorem \ref{concavityinh}} \else \subsection{정리 \ref{concavityinh}의 기초적 증명} \fi \label{elementaryconcavityproof}
\begin{proof}
\ifen We will prove the equivalent expression $2 v(\mathcal{X}_h) \geq v(\mathcal{X}_{h+1}) + v(\mathcal{X}_{h-1})$. Applying Theorem \ref{nestedapplication}, we write $\mathcal{X}_h = \mathcal{X}_{h-1} \cup\{j\}$ and $\mathcal{X}_{h+1} = \mathcal{X}_{h-1} \cup\{j, k\}$. If $t_k \leq t_j$, then 
\else $2 v(\mathcal{X}_h) \geq v(\mathcal{X}_{h+1}) + v(\mathcal{X}_{h-1})$ 같은 동등한 부등식을 증명하자. 정리 \ref{nestedapplication}을(를) 적용하면 $ \mathcal{X}_h = \mathcal{X}_{h-1} \cup\{j\}$ 그리고 $\mathcal{X}_{h+1} = \mathcal{X}_{h-1} \cup\{j, k\}$으로 표현할 수 있다. $t_k \leq t_j$인 경우, \fi
\begin{align}
\begin{split}
2 v(\mathcal{X}_h) &= v(\mathcal{X}_{h-1} \cup\{j\}) + v(\mathcal{X}_{h-1} \cup\{j\}) \\
&\geq v(\mathcal{X}_{h-1} \cup\{k\}) + v(\mathcal{X}_{h-1} \cup\{j\}) \\
&= v(\mathcal{X}_{h-1} \cup\{k\}) + (1 - f_j) v(\mathcal{X}_{h-1}) + f_j \operatorname{E}[\max\{t_j, X_{h-1}\}] \\
&= v(\mathcal{X}_{h-1} \cup\{k\}) - f_j v(\mathcal{X}_{h-1}) + f_j \operatorname{E}[\max\{t_j, X_{h-1}\}] + v(\mathcal{X}_{h-1})  \\
&\geq v(\mathcal{X}_{h-1} \cup\{k\})  - f_j v(\mathcal{X}_{h-1}\cup\{k\}) + f_j \operatorname{E}[\max\{t_j, X_{h-1}\}]+ v(\mathcal{X}_{h-1})\\
&= (1 - f_j) v(\mathcal{X}_{h-1} \cup\{k\})  + f_j \operatorname{E}[\max\{t_j, X_{h-1}\}]+ v(\mathcal{X}_{h-1})\\
&=  v(\mathcal{X}_{h-1} \cup\{j, k\}) + v(\mathcal{X}_{h-1})\\
&=  v(\mathcal{X}_{h+1}) + v(\mathcal{X}_{h-1}).
\end{split} 
\end{align}
\ifen The first inequality follows from the optimality of $\mathcal{X}_h$, while the second follows from the fact that adding $k$ to $\mathcal{X}_{h-1}$ can only increase its valuation.
\else 첫 번째 부등식은 $\mathcal{X}_h$의 최적성에 따르며, 두 번째 부등식은 $\mathcal{X}_{h-1}$에 $j$를 더하면 가치가 증가할 수밖에 없기 때문이다.\fi

\ifen If $t_k \geq t_j$, then the steps are analogous:
\else  $t_k \geq t_j$인 경우는 유사하다:\fi
\begin{align}
\begin{split}
2 v(\mathcal{X}_h) &= v(\mathcal{X}_{h-1} \cup\{j\}) + v(\mathcal{X}_{h-1} \cup\{j\}) \\
&\geq v(\mathcal{X}_{h-1} \cup\{k\}) + v(\mathcal{X}_{h-1} \cup\{j\}) \\
&= (1 - f_k) v(\mathcal{X}_{h-1}) + f_k \operatorname{E}[\max\{t_k, X_{h-1}\}] +  v(\mathcal{X}_{h-1} \cup\{j\})  \\
&= v(\mathcal{X}_{h-1}) - f_k  v(\mathcal{X}_{h-1}) + f_k \operatorname{E}[\max\{t_k, X_{h-1}\}] +  v(\mathcal{X}_{h-1} \cup\{j\})  \\
&\geq v(\mathcal{X}_{h-1}) - f_k  v(\mathcal{X}_{h-1}\cup\{j\}) + f_k \operatorname{E}[\max\{t_k, X_{h-1}\}] +  v(\mathcal{X}_{h-1} \cup\{j\})  \\
&= v(\mathcal{X}_{h-1}) + (1 - f_k) v(\mathcal{X}_{h-1}\cup\{j\}) + f_k \operatorname{E}[\max\{t_k, X_{h-1}\}]  \\
&= v(\mathcal{X}_{h-1}) + v(\mathcal{X}_{h-1} \cup\{j, k\})
\end{split}\\
&= v(\mathcal{X}_{h-1})  + v(\mathcal{X}_{h+1}) \qedhere
\end{align}
\end{proof}

\pagebreak
\ifen
\section*{References}
\addcontentsline{toc}{section}{References}
\else
\section*{참고문헌}
\addcontentsline{toc}{section}{참고문헌}
\fi
\noindent

\parskip 0em
\leftskip 2em
\parindent -2em
\ifen \else
전민희. 2015. ``{[대입 수시 전략]} 총 6번의 기회 $\cdots$ `상향·소신·안정' 분산 지원하라.'' 중앙일보, 8월 26일. \url{https://www.joongang.co.kr/article/18524069}.\fi

Acharya, Mohan S., Asfia Armaan, and Aneeta S. Antony. 2019. ``A Comparison of Regression Models for Prediction of Graduate Admissions.'' In \emph{Second International Conference on Computational Intelligence in Data Science.} \url{https://doi.org/10.1109/ICCIDS.2019.8862140}.

Badanidiyuru, Ashwinkumar and Jan Vondrák. 2014. ``Fast Algorithms for Maximizing Submodular Functions.'' In \emph{Proceedings of the 2014 Annual ACM--SIAM Symposium on Discrete Algorithms}, 1497--1514. \url{https://doi.org/10.1137/1.9781611973402.110}.

Balas, Egon and Eitan Zemel. 1980. ``An Algorithm for Large Zero-One Knapsack Problems.'' \emph{Operations Research} 28 (5): 1130--54. \url{https://doi.org/10.1287/opre.28.5.1130}. 

Bezanson, Jeff, Alan Edelman, Stefan Karpinski, and Viral B. Shah. 2017. ``Julia: A Fresh Approach to Numerical Computing.'' \emph{SIAM Review} 59: 65–98. \url{https://doi.org/10.1137/141000671}.

Blum,  Manuel, Robert W. Floyd, Vaughan Pratt, Ronald L. Rivest, and Robert E. Tarjan. 1973. ``Time Bounds for Selection.'' \emph{Journal of Computer and System Sciences} 7 (4): 448--61. \url{https://doi.org/10.1016/S0022-0000(73)80033-9}.


Calinescu, Gruia, Chandra Chekuri, Martin Pál, and Jan Vondrák. 2011. ``Maximizing a Monotone Submodular Function Subject to a Matroid Constraint.'' \emph{SIAM Journal on Computing} 40 (6): 1740--66. \url{https://doi.org/10.1137/080733991}.

Carraway, Robert, Robert Schmidt, and Lawrence Weatherford. 1993. ``An Algorithm for Maximizing Target Achievement in the Stochastic Knapsack Problem with Normal Returns.'' \emph{Naval Research Logistics} 40 (2): 161--73. \url{https://doi.org/10.1002/nav.3220400203}.

Cormen, Thomas, Charles Leiserson, and Ronald Rivest. 1990. \emph{Introduction to Algorithms.} Cambridge, MA: The MIT Press.

Dantzig, George B. 1957. ``Discrete-Variable Extremum Problems.'' \emph{Operations Research} 5 (2): 266--88.

Dean, Brian, Michel Goemans, and Jan Vondr\'ak. 2008. ``Approximating the Stochastic Knapsack Problem: The Benefit of Adaptivity.'' \emph{Mathematics of Operations Research} 33 (4): 945--64. \url{https://doi.org/10.1287/moor.1080.0330}.

Kellerer, Hans, Ulrich Pferschy, and David Pisinger. 2004. \emph{Knapsack Problems.} Berlin: Springer.

Kulik, Ariel, Hadas Shachnai, and Tami Tamir. 2013. ``Approximations for Monotone and Nonmonotone Submodular Maximization with Knapsack Constraints.'' \emph{Mathematics of Operations Research} 38 (4): 729--39. \url{https://doi.org/10.1287/moor.2013.0592}.

\ifen Jeon, Minhee. 2015. ``[College application strategy] Six chances total\dots divide applications across reach, target, and safety schools'' (in Korean). Jungang Ilbo, Aug. 26. \url{https://www.joongang.co.kr/article/18524069}.\fi

Fisher, Marshall, George Nemhauser, and Laurence Wolsey. 1978. ``An Analysis of Approximations for Maximizing Submodular Set Functions—I.'' \emph{Mathematical Programming} 14: 265--94. 


Fu, Chao. 2014. ``Equilibrium Tuition, Applications, Admissions, and Enrollment in the College Market.'' \emph{Journal of Political Economy} 122 (2): 225--81. \url{https://doi.org/10.1086/675503}. 

Garey, Michael and David Johnson. 1979. \emph{Computers and Intractability: A Guide to the Theory of NP-Completeness.} New York: W. H. Freeman and Company. 

Hartlaub, Vanessa and Thorsten Schneider. 2012. “Educational Choice and Risk Aversion: How Important Is Structural vs. Individual Risk Aversion?” \emph{SOEPpapers on Multidisciplinary Panel Data Research,} no. 433. \url{https://www.diw.de/documents/publikationen/73/diw_01.c.394455.de/diw_sp0433.pdf}.

Kahneman, Daniel. 2011. \emph{Thinking, Fast and Slow.} New York: Macmillan.

Markowitz, Harry. 1952. ``Portfolio Selection.'' \emph{The Journal of Finance} 7 (1): 77--91. \url{https://www.jstor.org/stable/2975974}.

Martello, Silvano and Paolo Toth. 1990. \emph{Knapsack Problems: Algorithms and Computer Implementations.} New York: John Wiley \& Sons. 

Meucci, Attilio. 2005. \emph{Risk and Asset Allocation.} Berlin: Springer-Verlag, 2005. 


Rozanov, Mark and Arie Tamir. 2020. ``The Nestedness Property of the Convex Ordered Median Location Problem on a Tree.'' \emph{Discrete Optimization} 36: 100581. \url{https://doi.org/10.1016/j.disopt.2020.100581}.

Sklarow, Mark. 2018. \emph{State of the Profession 2018: The 10 Trends Reshaping Independent Educational Consulting.} Technical report, Independent Educational Consultants Association. \url{https://www.iecaonline.com/wp-content/uploads/2020/02/IECA-Current-Trends-2018.pdf}.

Sniedovich, Moshe. 1980. ``Preference Order Stochastic Knapsack Problems: Methodological Issues.'' \emph{The Journal of the Operational Research Society} 31 (11): 1025--32. \url{https://www.jstor.org/stable/2581283}. 

Steinberg, E. and M. S. Parks. 1979. ``A Preference Order Dynamic Program for a Knapsack Problem with Stochastic Rewards.'' \emph{The Journal of the Operational Research Society} 30 (2): 141--47. \url{https://www.jstor.org/stable/3009295}. 


Van Huizen, Thomas and Rob Alessie. 2019. ``Risk Aversion and Job Mobility.’’ \emph{Journal of Economic Behavior \& Organization} 164: 91--106. \url{https://doi.org/10.1016/j.jebo.2019.01.021}.

Vazirani, Vijay. 2001. \emph{Approximation Algorithms.} Berlin: Springer. 

Wolsey, Laurence. 1998. \emph{Integer Programming.} New York: John Wiley \& Sons.

\end{document}